\definecolor{hanblue}{rgb}{0.27, 0.42, 0.81}
\DeclareMathOperator{\D}{\mathrm{d}}
\DeclareMathOperator{\ext}{Ext}
\DeclareMathOperator{\supp}{supp}
\newcommand{\AC}{{\rm AC}}
\renewcommand{\div}{{\rm div}\,}
\newcommand{\R}{\mathbb{R}}
\newcommand{\Q}{\mathbb{Q}}
\newcommand{\N}{\mathbb{N}}
\newcommand{\M}{\mathcal{M}}
\newcommand{\Distr}{\mathcal{D}}
\newcommand{\Ha}{\mathcal{H}}
\newcommand{\car}{\mathscr{S}}
\newcommand{\cone}{\mathscr{C}}
\newcommand{\Bb}{\mathscr{B}}
\newcommand{\Om}{\Omega}
\newcommand{\de}{\partial}
\newcommand{\f}{\varphi}
\newcommand{\e}{\varepsilon}
\DeclareRobustCommand{\rchi}{{\mathpalette\irchi\relax}}
\newcommand{\irchi}[2]{\raisebox{\depth}{$#1\chi$}}
\newcommand{\nor}[1]{\left\| #1 \right\|} %
\newcommand{\zak}{%
  \mathbin{\vrule height 1.6ex depth 0pt width
0.13ex\vrule height 0.13ex depth 0pt width 1.3ex}
}    %
\newtheorem{theorem}{Theorem}[section]
\newtheorem{definition}[theorem]{Definition}
\newtheorem{proposition}[theorem]{Proposition}
\newtheorem{lemma}[theorem]{Lemma}
\newtheorem{remark}[theorem]{Remark}
\DeclareMathOperator*{\argmin}{arg\,min}
  \let\div\relax
  \DeclareMathOperator*{\div}{div}
	\newcommand{\norm}[1]{\left\lVert#1\right\rVert}
	\DeclareMathOperator{\dist}{dist}
\newcommand{\curves}{C_{\rm w} ([0,1];\M(\Om))}
\newcommand{\pcurves}{C_{\rm w} ([0,1];\M^+(\Om)) }
\newcommand{\pcurvesV}{C_{\rm w} ([0,1];\M^+(V))}
\newcommand{\weakstar}{\stackrel{*}{\rightharpoonup}}  %
\DeclareMathOperator*{\Lip}{Lip}
\title[A superposition principle for the inhomogeneous continuity equation]{A superposition principle for the inhomogeneous continuity equation with Hellinger-Kantorovich-regular coefficients}
 \author[K. Bredies]{Kristian Bredies}
\author[M. Carioni]{Marcello Carioni} 
\author[S. Fanzon]{Silvio Fanzon}
 \address[Kristian Bredies, Silvio Fanzon]{University of Graz, Institute of Mathematics and Scientific \mbox{Computing}, Heinrichstra\ss e 36, 8010 Graz, Austria}
 \address[Marcello Carioni]{University of Cambridge, Department of Applied Mathematics and Theoretical Physics, Wilberforce Road, Cambridge
CB3 0WA, UK}
\email[Kristian Bredies]{Kristian.Bredies@uni-graz.at}
\email[Marcello Carioni]{mc2250@maths.cam.ac.uk}
\email[Silvio Fanzon]{Silvio.Fanzon@uni-graz.at}
\begin{document}

\begin{abstract}
\small{ %
 We study measure-valued solutions of the inhomogeneous continuity equation
  $\partial_t \rho_t + \div(v\rho_t) = g \rho_t$ where the coefficients $v$ and $g$ are of low regularity. %
  A new superposition principle is proven for positive measure solutions and coefficients for which %
  the recently-introduced dynamic Hellinger-Kantorovich energy is finite. %
  This principle gives a decomposition of
  the solution into curves $t \mapsto h(t)\delta_{\gamma(t)}$ that satisfy the characteristic system $\dot \gamma(t) = v(t, \gamma(t))$, $\dot h(t) = g(t, \gamma(t)) h(t)$ in an appropriate sense.
  In particular, it provides a generalization of existing superposition
  principles to the low-regularity case of $g$ %
  where characteristics are not unique with respect to $h$. Two applications of this principle are presented. First, uniqueness of minimal total-variation solutions for the inhomogeneous continuity equation is obtained if characteristics are unique up to their possible vanishing time. Second, the extremal points of dynamic Hellinger-Kantorovich-type regularizers are characterized. Such regularizers arise, e.g., in the context of dynamic inverse problems and dynamic optimal transport.

  \vskip .3truecm \noindent Key words: Continuity equation, superposition principle, %
  Hellinger-Kantorovich energy,
  uniqueness, dynamic inverse problems, optimal transport regularization.

  \vskip.1truecm \noindent 2010 Mathematics Subject Classification:
   35C15, 35F05, 28A50, 35L03, 65J20.
}
\end{abstract} 
\maketitle

\section{Introduction}

The main objective of this paper is to present a new superposition principle for positive measure solutions to the linear inhomogeneous continuity equation, assuming natural regularity on the velocity field and on the source term. Such assumptions are substantially weaker than what is currently available in the literature, as we will discuss below.
To be more precise, given $\Om \subset \R^d$ the closure of an open bounded domain, we consider narrowly continuous curves of positive measures $t \mapsto \rho_t$ in $\M^+(\Om)$ solving
\begin{equation}\label{eq:intro_cont}
	\de_t \rho_t + \div(v\rho_t ) =g \rho_t   \,\,\,\, \text{ in } \,\,\,\, (0,1) \times \Om 
\end{equation}
in the sense of distributions, where $v \colon (0,1) \times \Om \to \R^d$ is a velocity field satisfying no flux boundary conditions on $\de\Om$ and $g \colon (0,1) \times \Om \to \R$ is a source term encoding the inhomogeneity of the equation. 
We assume that the coefficients $v$ and $g$ are Hellinger-Kantorovich-regular, namely, they are Borel measurable and satisfy the bound  
\begin{equation}\label{eq:quadraticbound}
\int_0^1 \int_{\Om} |v(t,x)|^2 + |g(t,x)|^2 \, d\rho_t (x) \, dt< \infty \,.
\end{equation}
In the following we will clarify the role of \eqref{eq:quadraticbound} in connection to recent advancements in the theory of Unbalanced Optimal Transport. Our task is to provide a superposition principle for \eqref{eq:intro_cont} that allows to represent any positive solution $t \mapsto \rho_t$ as a superposition of elementary solutions, that is, %
curves of measures of the form $t \mapsto h(t)\delta_{\gamma(t)}$, where the trajectories $\gamma \colon [0,1] \to \Om$ and the weights $h \colon [0,1] \to [0,\infty)$ solve, in an appropriate sense, the system of characteristics for \eqref{eq:intro_cont}:
\begin{equation}\label{eq:int_ODEnonhom}
(i) \ \ \dot{\gamma}(t)= v(t,\gamma(t)) \qquad (ii)\ \  \dot{h}(t)= g(t,\gamma(t))h(t)\quad \text{in} \quad (0,1)\,.
\end{equation}
Notice that $(i)$ describes all possible elementary trajectories which follow the flow given by $v$, while $(ii)$ encodes the lack of mass preservation for solutions to \eqref{eq:intro_cont}, due to the inhomogeneity. The precise statement of such superposition principle is given in Theorem \ref{thm:intro_main} below. Subsequently we provide two applications of the superposition principle for \eqref{eq:intro_cont}. First we prove uniqueness for minimal norm solutions to \eqref{eq:intro_cont} under the assumption of uniqueness for solutions to \eqref{eq:int_ODEnonhom} up to their possible vanishing time (see Theorem \ref{thm:uniquenessintrro}); Second, we characterize extremal points of regularizers closely related to the energy at \eqref{eq:quadraticbound}, and apply such result to sparsity for dynamic inverse problems regularized via unbalanced optimal transport (see Theorem \ref{thm:intro_ext}).  

Concerning relevant literature, we mention that the superposition principle for narrowly continuous curves of probability measures $t \mapsto \rho_t$ solving the homogeneous continuity equation
\begin{equation}\label{eq:intro_cont_hom}
	\de_t \rho_t + \div(v\rho_t ) =0   \,\,\,\, \text{ in } \,\,\,\, (0,1) \times \Om 
\end{equation}
is by now classical. %
It was first introduced in the Euclidean setting by Ambrosio in \cite{ambrosioinventiones}, where it was employed to investigate uniqueness and stability of Lagrangian flows in the context of DiPerna-Lions Theory \cite{dipernalions}. Since then it has been applied to different tasks \cite{acfarma,bernard1, bernard2, bianchinibonicattoinventiones,bianchiniSIAM} and extended to various settings \cite{bonicattogusev2018,lisini,maniglia,3superposition}. %
In \cite{ags} the velocity field $v$ is assumed to satisfy
\begin{equation} \label{intro:hom_bound}
\int_0^1\int_{\Om} |v(t,x)|^2 \, d\rho_t(x) \, dt<\infty\,.
\end{equation}
 An elementary solution to \eqref{eq:intro_cont_hom} is of  the form $t \mapsto \delta_{\gamma(t)}$ where $\gamma : [0,1] \rightarrow \Om$ is an absolutely continuous curve solving the characteristic equation $(i)$ in \eqref{eq:int_ODEnonhom}. 
Due to the lack of regularity of $v$, solutions to the initial value problem associated to $(i)$ are not unique. Such non-uniqueness is reflected in the superposition formula, which in this case is achieved by constructing a probability measure $\sigma$ on the set $\Gamma:=C([0,1];\Om)$. To be more precise, it can be shown that if $\rho_t \in \M^+(\Om)$ is a narrowly continuous solution to \eqref{eq:intro_cont_hom}  and $v$ satisfies \eqref{intro:hom_bound}, then there exists a measure $\sigma \in \M^+(\Gamma)$ concentrated on absolutely continuous curves satisfying $(i)$, with the property that $\rho_t$ can be represented by the pushforward of $\sigma$ via the evaluation map $e_t(\gamma) := \gamma(t)$, that is, 
\begin{equation}\label{eq:superpositionhomintro}
\int_\Om \f(x)  \,d\rho_t(x) = \int_\Gamma \varphi(\gamma(t))\, d\sigma(\gamma) \quad \text{ for all } \quad \f \in C(\Om)\,, \,\, t \in [0,1]\,.
\end{equation}   
We refer the reader to \cite[Theorem 8.2.1]{ags} for a proof of \eqref{eq:superpositionhomintro} with $\Om=\R^d$ and to \cite[Theorem 7]{bcfr} for the case of $\Om$ being the closure of a bounded domain.

A generalization of \eqref{eq:superpositionhomintro} for positive measure solutions to the inhomogeneous continuity equation \eqref{eq:intro_cont} in $\Om=\R^d$ is presented in \cite{maniglia}. Specifically, the following is proven in \cite[Theorem 4.1]{maniglia}: suppose that $\rho_t \in \M^+(\Om)$ is a narrowly continuous solution to \eqref{eq:intro_cont}, that $v$ satisfies \eqref{intro:hom_bound} and $g$ is bounded; then there exists a representing measure $\sigma \in \M^+(\Gamma \times \Om)$, concentrated on pairs $(\gamma,x)$ with $\gamma$ absolutely continuous curve solving $(i)$ in \eqref{eq:int_ODEnonhom} with the initial condition $\gamma(0)=x$, and such that $\rho_t$ is represented via the implicit formula 
\begin{equation}\label{eq:intro_implicit}
	\int_\Om \f(x)  \,d\rho_t(x) = \int_{\Gamma \times \Om} \varphi(\gamma(t))\, d\sigma(\gamma,x) + \int_0^t \left( \int_\Om \int_\Gamma \f(\gamma(t)) \, d\sigma_s^x(\gamma) g(s,x) \, d\rho_s(x)   \right) \, ds  \,,
	\end{equation} 
	for all $\f \in C(\Om)$, where for fixed $t$, the family $\{\sigma_t^x\}_{x \in \Om}$ is the disintegration of $\sigma$ with respect to $(\tilde{e}_t)_\# \sigma \in \M^+(\Om)$, with $\tilde{e}_t(\gamma,x):=\gamma(t)$. There are two main drawbacks with the superposition principle from  \cite{maniglia}: First, the representation formula \eqref{eq:intro_implicit} is implicit; Second, the source term $g$ is required to be bounded. Such assumption on $g$ is substantial, as it implies uniqueness of solutions to $(ii)$ in \eqref{eq:int_ODEnonhom} along any trajectory. This fact essentially allows the author of \cite{maniglia} to construct the measure $\sigma$ in \eqref{eq:intro_implicit} in the same way as the one in  \eqref{eq:superpositionhomintro}. Another limitation of \cite{maniglia} is that it is not possible to provide a representation via \eqref{eq:intro_implicit} for solutions with mass that is vanishing or generating from zero during the evolution (for an example, see Remark~\ref{rem:zero_solutions}).

The main focus of this paper is to obtain a superposition principle for \eqref{eq:intro_cont} which overcomes the above mentioned limitations of \cite{maniglia}. Indeed we obtain an explicit representation formula for \eqref{eq:intro_cont} that resembles \eqref{eq:superpositionhomintro}. In addition, we remove the boundedness assumption on $g$, and we replace it by the growth condition \eqref{eq:quadraticbound}. Removing such assumption on $g$ is far from straightforward, as it requires a new functional analytic framework for constructing a representation measure $\sigma$. In fact, the low regularity of $g$ implies non-uniqueness for the initial value problem associated with $(ii)$ in \eqref{eq:int_ODEnonhom}. This suggests that a measure $\sigma$ representing a solution $t\mapsto \rho_t$ to \eqref{eq:intro_cont} has to account for non-uniqueness both for the trajectories $\gamma$ and the weights $h$. Therefore, $\sigma$ cannot just be a measure on $\Gamma$, but rather on a space of pairs $(\gamma,h)$, as discussed in Theorem \ref{thm:intro_main} below.

We now discuss the coupling of the continuity equation at \eqref{eq:intro_cont} with the energy at \eqref{eq:quadraticbound}, which is at the center of recent important developments in the theory of Unbalanced Optimal Transport. The classical theory of Optimal Transport, in its Monge-Kantorovich  formulation \cite{kantorovich,sant,villani}, concerns the problem of transporting mass from a probability measure into a target one, while minimizing a given cost. 
 Benamou and Brenier \cite{bb} made the crucial observation that the classical formulation of optimal transport has a dynamic counterpart, which links the continuity equation \eqref{eq:intro_cont_hom} with the energy at \eqref{intro:hom_bound}.
More precisely they observed that it is possible to compute the optimal transport between two probability measures $\rho_0$ and $\rho_1$ by minimizing the dissipation at \eqref{intro:hom_bound} 
among all the curves of probability measures $t \mapsto \rho_t$ and velocity fields $v$ solving the continuity equation %
\eqref{eq:intro_cont_hom} 
with initial and final conditions given by $\rho_0$ and $\rho_1$ respectively. %
Such dynamic formulation makes possible to endow the space of probability measures with a differentiable structure \cite{ags}, bringing to light deep connections between optimal transport and functional analytic issues, such as the characterization of differential equations as gradient flows in spaces of measures \cite{ags,agsduke,agsinventiones,ottokinderleher,ottokinderleher2,otto1,otto2} or the derivation of sharp inequalities \cite{agueh,cordero,maggi2,maggi,nazaret,otto_villani}. Particularly in connection to applications, the assumption of mass preservation during the evolution is quite restrictive. Overcoming this limitation is at the core of the so-called unbalanced optimal transport theory. Among the various formulations, we highlight the one introduced in \cite{chizat,kmv,liero}. There, transporting a positive measure $\rho_0$ into a target one $\rho_1$ corresponds to minimize a weighted version of \eqref{eq:quadraticbound} 
among all curves of positive measures $t \mapsto \rho_t$ and fields $v, g$ satisfying the inhomogeneous continuity equation \eqref{eq:intro_cont} with initial and final conditions given by $\rho_0$ and $\rho_1$ respectively. The quantity at \eqref{eq:quadraticbound} takes the name of Wasserstein-Fisher-Rao or Hellinger-Kantorovich energy in the literature. Such an approach has been successfully employed in applications where mass preservation is violated \cite{bf, chizat3,liero2,schmitzerwirth2019}. In particular in \cite{liero} it is shown that the above minimization procedure induces a distance which is compatible with a differentiable structure on the space $\M^+(\Om)$. This distance can also be derived from the dynamic formulation of the Logarithmic-Entropy Optimal Transport problem \cite{liero} or can be regarded as dissipation energy for a certain class of scalar reaction-diffusion equations \cite{liero2}. 

We conclude this introduction by discussing in more details the superposition principle we propose for \eqref{eq:intro_cont}, as well as the applications provided in this paper.  
The rest of the manuscript is organized as follows. In Section \ref{sec:preliminaries} we introduce basic notations, as well as presenting some results on continuity equations and optimal transport energies. In Section \ref{sec:functional} we set the functional analytic framework needed in order to prove our superposition principle. In particular we investigate properties of the Hellinger-Kantorovich energy \eqref{eq:quadraticbound} when restricted to elementary solutions to \eqref{eq:intro_cont}. In Section \ref{sec:main_thm} we provide a proof for the main result of this paper, that is, the superposition principle in Theorem \ref{thm:intro_main} below. Finally, in Sections \ref{sec:uniqueness}, \ref{sec:extremalpoints} we detail applications of the superposition principle to uniqueness for solutions to \eqref{eq:intro_cont} and to sparsity for dynamic inverse problems with Hellinger-Kantorovich-type regularizers.

\subsection{Main result}

To obtain a superposition principle for \eqref{eq:intro_cont} under the energy bound \eqref{eq:quadraticbound}  we construct a positive measure $\sigma$ on  
the set $\car_\Om$ of narrowly continuous curves $t \mapsto \rho_t$ with values in 
$
\cone_\Om:=\{ h \delta_{\gamma} \in \M(\Om) \, \colon \, h \geq 0 \,, \gamma \in \Om \}\,.
$ 
We endow $\cone_\Om$ with the flat distance of measures and $\car_\Om$ with the respective supremum distance. In this way $\car_\Om$ becomes a separable metric space. Notice that $\car_\Om$ plays the role of the set of continuous curves $\Gamma$ in \eqref{eq:superpositionhomintro}.  As we will see, c.f.~Remark~\ref{rem:compcone}, the construction of $\cone_\Om$ closely resembles the cone space introduced in \cite{liero2,liero} to study absolutely continuous curves with respect to the Hellinger-Kantorovich distance. It is immediate to check that elements of $\car_\Om$ can be represented by $\rho_t = h(t) \delta_{\gamma(t)}$, for some non-negative weight $h \in C[0,1]$ and curve $\gamma \in C(\{h>0\}; \Om)$, where we set $\{h>0\} := \{t \in [0,1] \, \colon \, h(t)>0\}$.
Thus, the mass of the elements of $\car_\Om$  is varying continuously in time and is allowed to vanish, reflecting the behavior of solutions to \eqref{eq:intro_cont}. The measure $\sigma$ we construct is concentrated on elements $\rho_t = h(t) \delta_{\gamma(t)} \in \car_\Om$, with $h$ and $\gamma$ solving the system of ODEs:
\begin{equation}\label{eq:int_ODEnonhom2}
(i) \ \ \dot{\gamma}(t)= v(t,\gamma(t)) \quad a.e. \ \text{ in }\  \{h>0\} \qquad (ii)\ \  \dot{h}(t)= g(t,\gamma(t))h(t)\qquad a.e. \ \text{ in }\  (0,1) \,.
\end{equation}
Notice that, in comparison to the system of characteristics at \eqref{eq:int_ODEnonhom}, we are restricting the first ODE to the set $\{h>0\}$. Indeed, if $h(t) = 0$, then $\rho_t = 0$ and thus we lose any information on the trajectories for that time instant. The above observations are formalized in the following theorem, which is the main result of our paper (c.f. Theorem \ref{thm:lifting}).

\begin{theorem} \label{thm:intro_main}
Let $\Om \subset \R^d$ be the closure of an open bounded domain.
Let $\rho_t \colon [0,1] \to \M^+(\Om)$ be a narrowly continuous solution to \eqref{eq:intro_cont} for some Borel measurable $v \colon (0,1) \times \Om \to \R^d$ , $g \colon (0,1) \times \Om \to \R$ satisfying \eqref{eq:quadraticbound} and such that $v$ has no flux on $\de \Om$. 
Then there exists a measure $\sigma \in \M^+(\car_\Om)$ concentrated on curves of measures $\rho_t = h(t) \delta_{\gamma(t)}$ with $h,
\gamma$ solving  \eqref{eq:int_ODEnonhom2} and such that 
\begin{equation} \label{eq:intro_representation}
\int_\Om \f(x) \, d\rho_t(x) =\int_{\car_\Om} h(t)\f(\gamma(t))\, d\sigma(\gamma,h)  \,\,\, \text{ for all } \,\, \, \f \in C(\Om)\,,\,\, t \in [0,1] \,.
\end{equation}
Conversely, assume that $\sigma \in \M^+(\car_\Om)$ is concentrated on solutions to \eqref{eq:int_ODEnonhom2} and satisfies 
\begin{equation} \label{eq:intro_representation_bound}
\int_0^1 \int_{\car_\Om} h(t) \left(1+ | v(t,\gamma(t))| + |g(t,\gamma(t))|  \right) \, d\sigma(\gamma,h)\, dt < \infty\,.
\end{equation}
Then \eqref{eq:intro_representation} defines a narrowly continuous curve of positive measures solving \eqref{eq:intro_cont}.
\end{theorem}

Notice that the growth condition \eqref{eq:intro_representation_bound} is natural, in the sense that if a measure $\sigma$ represents $\rho_t$ and \eqref{eq:quadraticbound} holds, then automatically $\sigma$ satisfies \eqref{eq:intro_representation_bound}. We refer the reader to Remark \ref{rem:conv_bound} below for more details.
We also remark that the set $\Om$ in Theorem \ref{thm:intro_main} is required to be bounded. Indeed it would be interesting to extend our result to unbounded domains, in the spirit of \cite{ags,maniglia} where $\Om=\R^d$ is considered. However, it seems that a different proof strategy or stronger assumptions are required, see Remark \ref{rem:proofRd} below for details. Moreover, similarly to \cite{ags,maniglia}, it should be possible to prove a version of Theorem \ref{thm:intro_main} in which  \eqref{eq:quadraticbound} is replaced by an $L^p$ bound for $1 \leq p \leq \infty$. Such analysis falls outside the scope of our paper.

The proof of Theorem \ref{thm:intro_main} is presented in Section \ref{sec:main_thm}. It is based on a similar smoothing strategy as the one employed in \cite{ambrosioinventiones} to prove \eqref{eq:superpositionhomintro}. However in this case there are two main differences: first one needs to establish compactness properties for a coercive version of the Hellinger-Kantorovich energy when restricted to elements of $\cone_\Om$, see Proposition \ref{lem:compact_sublevels}; second the smoothing needs to take into account the possibility of the measure $\rho_t$ vanishing at some time instance, as detailed in Remark~\ref{rem:zero_solutions} below.

\subsection{Uniqueness of solutions to the continuity equation}
In Section \ref{sec:uniqueness} we present the first application of the superposition principle of Theorem \ref{thm:intro_main}. 
Our aim is to show that uniqueness of solutions for the system of ODEs at \eqref{eq:int_ODEnonhom}, up to their possible vanishing time, implies uniqueness for measure solutions to the inhomogeneous continuity equation \eqref{eq:intro_cont} satisfying the bound \eqref{eq:quadraticbound} and with minimal total variation. 
The key ingredient of the proof is formula \eqref{eq:intro_representation}, which allows to decompose any solution of \eqref{eq:intro_cont} satisfying the bound \eqref{eq:quadraticbound} into a superposition of elementary curves $t \mapsto h(t)\delta_{\gamma(t)}$ such that $(\gamma,h)$ are solutions to \eqref{eq:int_ODEnonhom2}. Such representation allows to link uniqueness for \eqref{eq:int_ODEnonhom} with the one for \eqref{eq:intro_cont}.
The main difference between our result and the classical one for the homogeneous continuity equation \cite[Theorem 9]{ambrosiocrippalecturenotes} lies in the fact that elementary solutions $\rho_t = h(t)\delta_{\gamma(t)}$ are allowed to vanish in time. 
In this case uniqueness for  \eqref{eq:int_ODEnonhom} is not enough to ensure uniqueness of solutions to the inhomogeneous continuity equation. Indeed, when the mass of a solution vanishes at a given time instant $\bar t \in (0,1)$, the uniqueness assumption for \eqref{eq:int_ODEnonhom} is not providing any information on the behavior of the solutions for $t > \bar t$: this is because the measure $\sigma$ is concentrated on solutions to \eqref{eq:int_ODEnonhom2} where $i)$ is only valid in the set $\{h>0\}$.
Therefore, in order to recover uniqueness for \eqref{eq:intro_cont}, we impose an extra constraint on the total variation of its solutions. More precisely, we show that solutions to \eqref{eq:intro_cont} with minimal mass can be represented, invoking Theorem \ref{thm:intro_main}, by a measure $\sigma$ concentrated on curves $t \mapsto h(t)\delta_{\gamma(t)}$ such that $(\gamma,h)$ solves \eqref{eq:int_ODEnonhom2} and $h$ is strictly positive in an interval $[0,\tau) \cap [0,1]$ for some $\tau \in \R$. Such observation allows to employ uniqueness for the system of characteristics at \eqref{eq:int_ODEnonhom}, up to their possible vanishing time, to infer uniqueness for measure solutions to \eqref{eq:intro_cont} with minimal total variation.  We obtain the following theorem, c.f. Theorem \ref{thm:uniqueness}.

\begin{theorem}\label{thm:uniquenessintrro}
Let $v \colon (0,1) \times \Om \to \R^d$, $g \colon (0,1) \times \Om \to \R$ be Borel measurable functions and $A\subset \Om$ be a Borel measurable set. Suppose that: 
\begin{itemize}
\item[\textsc{(Hyp)}] For each $x\in A$ the solution of the system of ODEs \eqref{eq:int_ODEnonhom2} with initial value $(x,1)$ is unique in $[0,\tau)$ for every $\tau \in (0,1)$ such that $[0,\tau) \subset \{h>0\}$.
\end{itemize}
Then, for any initial datum $\rho_0 \in \mathcal{M}^+(\Om)$ concentrated on $A$, the inhomogeneous continuity equation \eqref{eq:intro_cont} admits at most one positive narrowly continuous solution $t \mapsto \rho_t$ satisfying \eqref{eq:quadraticbound}, with initial datum $\rho_0$, and such that
$\| \rho\|_{\mathcal{M}} \leq  \|\tilde \rho\|_{\mathcal{M}}$
for every $t \mapsto \tilde \rho_t$ positive narrowly continuous solution to \eqref{eq:intro_cont}  satisfying \eqref{eq:quadraticbound}, and such that $\tilde \rho_0 = \rho_0$. 
\end{theorem}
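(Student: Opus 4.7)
The plan is to reduce the uniqueness of PDE solutions to the uniqueness of the characteristic ODEs \eqref{eq:int_ODEnonhom2} through Theorem \ref{thm:intro_main}. The key extra ingredient (beyond the homogeneous case) is that minimality of the total variation must be used to rule out pathological representing measures supported on curves whose mass weight $h$ either starts at zero or vanishes and then re-emerges, since on such curves (Hyp) gives no information.

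Suppose $t \mapsto \rho_t$ and $t \mapsto \tilde\rho_t$ are two positive narrowly continuous solutions of \eqref{eq:intro_cont} satisfying \eqref{eq:quadraticbound}, both of minimal total variation among such solutions starting from $\rho_0$. The first step is to apply Theorem \ref{thm:intro_main} to produce representing measures $\sigma, \tilde\sigma \in \M^+(\car_\Om)$, each concentrated on curves $(\gamma, h)$ satisfying \eqref{eq:int_ODEnonhom2}, and for which \eqref{eq:intro_representation} holds. The goal becomes to show that the pushforwards of $\sigma$ and $\tilde\sigma$ under the evaluation maps $(\gamma, h)\mapsto h(t)\delta_{\gamma(t)}$ agree for every $t \in [0,1]$.

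The central step, and in my view the main obstacle, is to exploit minimality to restrict the support of $\sigma$ (and $\tilde\sigma$) to \emph{good} curves, meaning curves with $h(0) > 0$ and such that the set $\{h>0\}$ is a single interval $[0,\tau)$ with $\tau = \tau(\gamma,h) \in (0,1]$, with $h\equiv 0$ on $[\tau,1]$. Indeed, for any $\sigma$-a.e.\ curve one can define its \emph{trimmed} version by restricting $h$ to the connected component of $\{h>0\}$ containing $0$ (or setting $h\equiv 0$ if $h(0)=0$) and extending it by zero elsewhere; this trimming preserves \eqref{eq:int_ODEnonhom2} and produces a modified measure $\sigma'$ whose associated curve of measures, via the converse part of Theorem \ref{thm:intro_main}, still solves \eqref{eq:intro_cont} with the same initial datum $\rho_0$ (since trimming does not affect $h(0)$). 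Because $\|\rho\|_{\M}$ equals $\int\!\!\int_0^1 h(t)\,dt\,d\sigma$ for positive solutions, and trimming strictly decreases this whenever it is non-trivial, minimality forces $\sigma$ to be concentrated on good curves. Making the trimming measurable in $(\gamma,h)$ and checking that the integrability in the converse part of Theorem \ref{thm:intro_main} is preserved is where the bookkeeping is heaviest.

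Once $\sigma, \tilde\sigma$ are concentrated on good curves, disintegrate with respect to the initial position $x = \gamma(0) \in A$, weighted by $h(0)$, so that $\rho_0 = \int_A d\rho_0(x)$ corresponds to the initial evaluation. By (Hyp), for each $x \in A$ the ODE \eqref{eq:int_ODEnonhom2} with data $(x,1)$ admits a unique solution $(\gamma^x, h^x)$ on its maximal positivity interval $[0,\tau^*(x))$; by linearity of the $h$-equation in its initial condition, uniqueness for $h(0)=1$ yields uniqueness for every $h(0)>0$, so for $\sigma$-a.e.\ good curve with $\gamma(0)=x$ one has $\gamma = \gamma^x$ and $h = h(0)\, h^x$ on $[0, \tau^*(x))$, extended by zero afterwards. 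A standard measurable selection argument ensures that $x \mapsto (\gamma^x, h^x)$ is Borel, so that substituting into \eqref{eq:intro_representation} yields
\[
\rho_t \;=\; \int_{\car_\Om} h(t)\delta_{\gamma(t)}\,d\sigma(\gamma,h) \;=\; \int_A h^x(t)\,\delta_{\gamma^x(t)}\,d\rho_0(x)
\]
for every $t \in [0,1]$, and the identical formula holds for $\tilde\rho_t$ with $\tilde\sigma$ in place of $\sigma$. Since the right-hand side depends only on $\rho_0$ and the uniquely determined characteristics, we conclude $\rho_t = \tilde\rho_t$ for all $t$, which proves the theorem.
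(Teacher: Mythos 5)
Your proposal is correct and follows essentially the same route as the paper: superposition via Theorem \ref{thm:intro_main}, a measurable cut-off/trimming argument showing that minimality of the total variation forces the representing measure to concentrate on curves whose positivity set is an initial interval $[0,\tau)$ (the paper's Lemmas \ref{lem:cutoffoperator} and \ref{lem:minimalityimpliesconcentration}), and then disintegration over the initial point combined with (Hyp), continuity of $h$, and linearity of the $h$-equation to show the solution is determined by $\rho_0$. The only minor deviation is how the scaling ambiguity of the weights is fixed: you normalize through the $h(0)$-weighted disintegration (so that $h^x(0)=1$), whereas the paper uses the property $\|h\|_{1}=1$ of the constructed measure (Lemma \ref{lem:l1isthekey}); both resolve the same non-uniqueness and lead to the same representation formula.
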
 

\medskip

\subsection{Extremal points of the Hellinger-Kantorovich energy}
In the context of inverse problems, the knowledge of the structure of extremal points of the regularizer allows to numerically reconstruct sparse solutions, i.e., solutions given by the superposition of finitely many extremal points \cite{bcfr2,inprep2}. It has been recently proposed \cite{bf} to regularize dynamic inverse problems via an energy related to the one at \eqref{eq:quadraticbound}.  
To be more specific, the energy at \eqref{eq:quadraticbound} can be recast into a convex functional $B_\delta$ over the space $\M:=\M((0,1)\times \Om)^{d+2}$  defined by
\begin{equation} \label{eq:intro_formula B}
B_\delta(\rho,m,\mu) :=  \frac12\int_0^1\int_{\Om} \left| \frac{dm}{d\rho}\right|^2 + \delta^2\left| \frac{d\mu}{d\rho}\right|^2 \, d\rho 
\end{equation}
if $\rho \geq 0$, $m,\mu \ll \rho$, and set to $\infty$ otherwise, where $\delta > 0$ is a parameter (c.f. Section \ref{sec:benamou}). The regularizer studied in \cite{bf} consists in the energy at \eqref{eq:intro_formula B} to which the total variation of $\rho$ is added, while enforcing the continuity equation constraint $\de_t \rho + \div m = \mu$.  
An analysis of the extremal points of such energy is currently missing in the literature: 
Therefore, in this paper, we employ the superposition principle of Theorem \ref{thm:intro_main} to characterize the extremal points of the set 
\begin{equation}\label{eq:introball}
\mathscr{B} = \{(\rho,m,\mu) :\partial_t \rho + \div m = \mu,\,\,  \beta B_{\delta}(\rho,m,\mu) + \alpha \|\rho\|_{\mathcal{M}} \leq 1\}\,,
\end{equation}
where $\alpha,\beta > 0$ are parameters.
Notice that we do not impose boundary conditions in the continuity equation at \eqref{eq:introball}.  Moreover the total variation of $\rho$ is added to the functional $B_\delta$, in order to enforce coercivity, and thus compactness of $\mathscr{B}$. We prove the following result (c.f. Theorem \ref{thm:fr}).

\begin{theorem} \label{thm:intro_ext}
The extremal points of the set defined in \eqref{eq:introball}
are exactly given by the zero measure $(0,0,0)$ and the triples of measures $(\rho, m,\mu)$ such that $\rho= h(t)\, dt \otimes \delta_{\gamma(t)}$, $m=\dot{\gamma}(t) \rho$, $\mu=\dot{h}(t)\,  dt \otimes \delta_{\gamma(t)}$ with the following properties:
\begin{itemize}
\item[a)]  $h , \, \sqrt{h} \in \AC^2[0,1]$, $\gamma \in C(\{h>0\};\Om)$ and
 $\sqrt{h}\gamma \in\AC^2 ([0,1];\R^d)$,
\item[b)] the set $\{h>0\}$ is connected,
\item[c)] the energy satisfies $ \beta B_{\delta}(\rho,m,\mu) + \alpha \|\rho\|_{\mathcal{M}} =1$.
\end{itemize}
\end{theorem}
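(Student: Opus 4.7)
The plan is to combine the superposition principle of Theorem \ref{thm:intro_main} with standard extremality arguments for convex sets defined by 1-homogeneous functionals. Setting $F := \beta B_\delta + \alpha\|\cdot\|_{\mathcal{M}}$, which is convex, lower semi-continuous and 1-homogeneous on the affine subspace of solutions to \eqref{eq:intro_cont_general}, one has $\mathscr{B} = \{F \leq 1\}$. The origin $(0,0,0)$ is trivially extremal as the unique minimum of $F$. Any nonzero extremal must satisfy $F(\rho,m,\mu) = 1$: otherwise, writing $(\rho,m,\mu) = \tfrac12((1+\varepsilon)(\rho,m,\mu)) + \tfrac12((1-\varepsilon)(\rho,m,\mu))$ for small $\varepsilon > 0$ produces a non-trivial convex decomposition inside $\mathscr{B}$. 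This proves (c).

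For elementary triples of the form $\rho = h(t)\,dt\otimes\delta_{\gamma(t)}$, $m = \dot\gamma\rho$, $\mu = \dot h\,dt\otimes\delta_{\gamma(t)}$, a direct computation gives
\[
B_\delta(\rho,m,\mu) = \frac12\int_{\{h>0\}} |\dot\gamma(t)|^2 h(t) + \delta^2 \frac{\dot h(t)^2}{h(t)}\,dt, \qquad \|\rho\|_{\mathcal{M}} = \int_0^1 h(t)\,dt.
\]
Combining finiteness of these quantities with the analysis of the Hellinger-Kantorovich energy on $\cone_\Om$ from Section \ref{sec:functional} would yield the regularity in (a). For the necessity of (b), if an extremal had $\{h>0\}$ split into two disjoint open components $I_1, I_2$, restricting the triple to $I_i\times\Om$ produces two admissible triples $(\rho_i, m_i, \mu_i)$ summing to $(\rho, m, \mu)$ and satisfying $F(\rho_i, m_i, \mu_i) \in (0,1)$, contradicting extremality.

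To prove sufficiency of (a)--(c), I would take $(\rho,m,\mu)$ satisfying these conditions and decompose it as $\lambda(\rho_1,m_1,\mu_1) + (1-\lambda)(\rho_2,m_2,\mu_2)$ with both triples in $\mathscr{B}$ and $\lambda \in (0,1)$. Since $\rho$ is concentrated on the graph of $\gamma$ and the $\rho_i$ are positive, both must be supported on this graph; finiteness of $B_\delta(\rho_i,m_i,\mu_i)$ then forces $m_i, \mu_i$ to share the same support, so the decomposition is parameterized by scalar functions of $t$ alone. Combining the continuity equation for each triple with the strict convexity of the integrand of $B_\delta$ and connectedness of $\{h>0\}$ from (b), one deduces $(\rho_i,m_i,\mu_i) = (\rho,m,\mu)$.

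For necessity, let $(\rho,m,\mu) \in \mathscr{B}\setminus\{(0,0,0)\}$ be extremal. Finiteness of $B_\delta$ gives $m = v\rho$, $\mu = g\rho$ with $(v,g)$ satisfying \eqref{eq:quadraticbound}, so disintegrating $\rho = dt\otimes\rho_t$ produces a narrowly continuous solution to \eqref{eq:intro_cont}. Applying Theorem \ref{thm:intro_main} then yields $\sigma \in \M^+(\car_\Om)$ concentrated on solutions of \eqref{eq:int_ODEnonhom2} representing $(\rho,m,\mu)$ as an integral of elementary triples. Rewriting this integral as an $F$-weighted convex combination of normalized elementary triples in $\mathscr{B}$ and invoking extremality should force $\sigma$ to concentrate on a single equivalence class of curves, yielding $\sigma = c\,\delta_{(\gamma_0, h_0)}$; absorbing $c$ into $h_0$ exhibits $(\rho, m, \mu)$ as elementary, and (a)--(c) then follow from the previous paragraphs. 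The main obstacle lies precisely in this concentration step: one must identify the correct equivalence classes under the rescaling $(\gamma, h) \sim (\gamma, ch)$ and invoke the sharp form of Jensen's inequality for the strictly convex integrand defining $B_\delta$ to conclude that $\sigma$-a.e.\ the elementary triples coincide up to scalar multiples.
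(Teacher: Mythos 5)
Your overall architecture (energy identity for elementary triples, restriction argument for connectedness, superposition principle for the necessity direction) is the same as the paper's, and your proof of (c) and of the necessity of (b) are fine, but two key steps rest on a tool that is not available. The integrand $\Psi_\delta$ in \eqref{conv conj} is one-homogeneous, hence \emph{never} strictly convex: it is linear along rays. In the sufficiency direction, equality in the convexity inequality for a decomposition $\rho=\lambda\rho^1+(1-\lambda)\rho^2$ with $\rho^j_t=h_j(t)\delta_{\gamma(t)}$ only yields $\dot h_1 h_2=h_1\dot h_2$ a.e.\ on $E:=\{h_1>0\}\cap\{h_2>0\}$, i.e.\ $h_1/h_2$ locally constant there --- proportionality, not equality. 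To conclude one must additionally (i) rule out the sets $Z_1=\{h_1>0\}\cap\{h_2=0\}$ and $Z_2=\{h_1=0\}\cap\{h_2>0\}$, which the paper does via connectedness of $\{h>0\}$ combined with a countable-image argument for $\min(h_1/h_2,C)$, and (ii) use $J(\rho^1,m^1,\mu^1)=J(\rho^2,m^2,\mu^2)=1$ together with one-homogeneity to force the proportionality constant to be $1$. Your sketch ``strict convexity $\Rightarrow$ $(\rho_i,m_i,\mu_i)=(\rho,m,\mu)$'' skips all of this and, as stated, would fail even in the balanced case.

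The same issue undermines your ``concentration step'' in the necessity direction, which you correctly flag as the main obstacle but propose to close with a sharp Jensen inequality. There is no Jensen gap to exploit: by \eqref{eq:equal1bis} the energy is \emph{additive} over the representing measure, $J(\rho,m,\mu)=\int_{\car_\Om}J(\gamma,h)\,d\sigma$, so no strict-convexity argument can force $\sigma$ to concentrate. The mechanism in the paper is different and more elementary: one does not show that $\sigma$ is a Dirac at all. Instead, assuming $\supp\rho_{\hat t}$ is not a singleton for some $\hat t\in\{h>0\}$, one splits $\Om=E_1\cup E_2$ with $\rho_{\hat t}(E_i)>0$, partitions $\car_\Om$ into $\Sigma_1=\{\gamma(\hat t)\in E_1,\,h(\hat t)>0\}$ and its complement (checking measurability of these sets in $(\car_\Om,\D)$), restricts $\sigma$ to each piece, verifies via the \emph{converse} part of Theorem \ref{thm:lifting} that each restricted triple is admissible with $J$-values $\lambda_i\in(0,1)$, $\lambda_1+\lambda_2=1$, and obtains a non-trivial convex splitting contradicting extremality. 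Once $\supp\rho_t$ is a singleton on $\{h>0\}$, the elementary form, the regularity (a), and the representation \eqref{eq:J_char} all come from Proposition \ref{prop:sublevelJ} applied to $\rho$ itself --- no identification of equivalence classes of curves under rescaling is needed, which also avoids the genuine non-uniqueness of the representing measure that your plan would have to confront.
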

In the above we denote by $\AC^2$ the set of absolutely continuous functions with a.e.~derivative in $L^2$ (see \cite[Section 1.1]{ags} for a precise definition).

Theorem \ref{thm:intro_ext} is a generalization of the results obtained in \cite{bcfr}, where the Benamou-Brenier energy with homogeneous continuity equation constraint is considered.
In Section \ref{sec:ext_sparse} we apply Theorem~\ref{thm:intro_ext} to understand the structure of sparse solutions for dynamic inverse problems with unbalanced optimal transport regularization. In particular, we consider the inverse problem proposed in \cite{bf}, where the minimization of the %
energy at \eqref{eq:introball} is coupled with a fidelity term penalizing  the distance between $\rho$ and some fixed observation. %
Applying recent results on sparsity \cite{chambolle,bc} we show that the minimization problem in \cite{bf} admits a solution which is a finite linear combination of extremal points of $\mathscr{B}$, that is, of curves as described in Theorem~\ref{thm:intro_ext}. %

\section{Preliminaries} \label{sec:preliminaries}
For measure theory notations and definitions we follow \cite{afp}. Given a metric space $Y$ we denote by $\M(Y)$, $\M(Y ; \R^d)$, $\M^+(Y)$ the spaces of bounded Borel measures, bounded vector Borel measures, bounded positive Borel measures on $Y$, respectively. Throughout the paper, whenever we say that a set or a function is measurable, we always intend Borel measurable, i.e., measurability with respect to the Borel $\sigma$-algebra. For a measure $\mu$ we denote its total variation measure by $|\mu|$. We say that a sequence of measures $\{\mu_n\}_n$ on $Y$ converges narrowly to $\mu$ if 
$
\int_Y \f(y) \, d\mu_n(y) \to \int_Y \f(y) \, d \mu(y)$ for all $\f \in C_b(Y)$, where $C_b(Y)$ denotes the set of real valued continuous and bounded functions on $Y$. 
 
 Let $\Om \subset \R^d$ be the closure of a bounded domain, with $d \in \N$, $d \geq 1$, and define the time-space domain $X_\Om:=(0,1) \times \Om$.
We say that $\rho \in \M(X_\Om)$ disintegrates with respect to time if there exists a Borel family of measures $\{\rho_t\}_{t \in [0,1]} \subset
\M(\Om)$ such that
	$
	\int_{X_\Om} \f \, d\rho = \int_0^1 \int_{\Om} \f (t,x) \, d\rho_t(x) \, dt$ for all $\f \in L^1_{\rho}(X_\Om)$. 
The disintegration is denoted by $\rho =dt \otimes  \rho_t$. Further, a curve of measures $t \in [0,1] \mapsto \rho_t \in \M(\Om)$ is narrowly continuous if the map
$t \mapsto \int_{\Om} \f(x) \, d\rho_t(x)$
is continuous for each fixed $\f \in C(\Om)$. The family of narrowly continuous curves is denoted by $\curves$. Notice that if $t \mapsto \rho_t$ is narrowly continuous, by the principle of uniform boundedness, it follows that $\rho:=dt \otimes \rho_t$ belongs to $\M(X_\Om)$. 
We also introduce $\pcurves$ as the family of narrowly continuous curves with values into $\M^+(\Om)$. The above definitions extend verbatim to the case $\Om=\R^d$.

\subsection{Continuity equation} 
Set $\M_\Om := \M (X_\Om) \times \M (X_\Om;\R^d) \times \M(X_\Om)$. We say that the triple $(\rho,m,\mu) \in \M_\Om$ solves the continuity equation 
\begin{equation} \label{cont}
\de_t \rho + \div m =\mu \quad \text{ in } \quad  X_\Om \,,
\end{equation}
whenever \eqref{cont} holds in the sense of distributions, i.e.,
\begin{equation} \label{cont weak}
\int_{X_\Om}  \de_t \f \, d\rho + 
\int_{X_\Om}  \nabla \f \cdot  dm  + \int_{X_\Om}   \f \, d\mu=0   \quad \text{for all} \quad \f \in C^{\infty}_c (X_\Om) \,.
\end{equation} 
Here, $\rho$ represents a density, $m$ a momentum field advecting $\rho$, while $\mu$ is a source term accounting for mass change. 
The above definition also holds for unbounded spatial domains, e.g., $\Om =\R^d$. 
Moreover the time interval $(0,1)$ can be replaced by $(0,T)$ with $T>0$. 
We remark that \eqref{cont weak} includes no flux boundary conditions for $m$ on $\de \Om$, and no initial conditions for $\rho$ are prescribed. Moreover \eqref{cont weak} can be equivalently tested with maps in $C^1_c (X_\Om)$  \cite[Remark 8.1.1]{ags}. The following lemma provides some properties of solutions to \eqref{cont weak} which will be needed in the coming analysis. The statement holds both in bounded domains as well as in $\R^d$. For a proof in bounded domains see, e.g., Propositions 2.2, 2.4 in \cite{bf}, which can be easily generalized to $\R^d$.

\begin{lemma} \label{lem:prop cont}
	Assume that $(\rho,m,\mu) \in \M_\Om$ satisfies \eqref{cont weak} with $\rho \in \M^+(X_\Om)$. Then $\rho =dt \otimes  \rho_t$, 
	where $\rho_t \in \M^+(\Om)$ for a.e.~$t$ in $(0,1)$. Moreover the map $t \mapsto \rho_t(\Om)$ belongs to $BV(0,1)$, with distributional derivative given by $\pi_\#\mu$, where $\pi \colon X_\Om \to (0,1)$ is the projection on the time coordinate. If in addition $m=v\rho$, $\mu = g \rho$ for some measurable $v \colon X_\Om \to \R^d$, $g \colon X_\Om \to \R$ with
	\[
	\int_0^1\int_{\Om} |v(t,x)| + |g(t,x)| \, d\rho_t(x) \, dt < \infty\,,
	\]
then there exists a unique curve $t \mapsto \tilde{\rho}_t$ in $\pcurves$ such that $\rho_t = \tilde{\rho}_t$ a.e.~in $(0,1)$. 
\end{lemma}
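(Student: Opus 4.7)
The plan is to establish the two claims in sequence: first extract the time disintegration together with the $BV$ identity, and then promote the a.e.\ defined curve to a narrowly continuous representative under the additional integrability on $v,g$. For the first step, I would apply the disintegration theorem to $\rho$ against its time projection $\alpha := \pi_{\#}\rho \in \M^{+}(0,1)$, producing probability measures $\{\rho'_t\}_{t\in(0,1)}$ on $\Om$ with $\rho = \alpha\otimes \rho'_t$. Testing the weak formulation \eqref{cont weak} against $\varphi(t,x) = \psi(t)$ with $\psi \in C^{\infty}_c(0,1)$ eliminates the divergence term and yields
\[
\int_0^1 \psi'(t)\, d\alpha(t) + \int_0^1 \psi(t)\, d(\pi_{\#}\mu)(t) = 0,
\]
i.e.\ $\alpha' = \pi_{\#}\mu$ in $\mathcal{D}'(0,1)$. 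Since $\pi_{\#}\mu$ is a bounded Radon measure, the standard $BV$ characterization (a distribution whose derivative is a Radon measure is represented by a $BV$ function) shows that $\alpha$ agrees, as a distribution, with a $BV(0,1)$ function; because $\alpha$ is itself a measure, this forces $\alpha \ll \mathcal{L}^1$ with density $\beta \in BV(0,1)$. Setting $\rho_t := \beta(t)\rho'_t$ yields $\rho = dt\otimes \rho_t$ with $\rho_t(\Om) = \beta(t) \in BV(0,1)$ and $\beta' = \pi_{\#}\mu$, as required.

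Under the extra structure $m = v\rho$, $\mu = g\rho$, I would next, for every $\varphi\in C^1(\Om)$, test \eqref{cont weak} against $\psi(t)\varphi(x)$ with $\psi \in C^{\infty}_c(0,1)$, obtaining
\[
-\int_0^1 \psi'(t)\, f_{\varphi}(t)\, dt = \int_0^1 \psi(t) \Bigl(\int_{\Om} \nabla\varphi\cdot v + \varphi\, g\, d\rho_t\Bigr)\, dt, \qquad f_{\varphi}(t) := \int_{\Om} \varphi\, d\rho_t.
\]
The integrability hypothesis makes the inner integral an $L^{1}(0,1)$ function of $t$, so $f_{\varphi}\in W^{1,1}(0,1)$ and admits a continuous representative $\widetilde f_{\varphi}$ on $[0,1]$, coinciding with $f_{\varphi}$ off an $\mathcal{L}^1$-null set $N_{\varphi}$. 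To assemble a single narrowly continuous curve, I would choose a countable family $\{\varphi_n\}_n\subset C^1(\Om)$ dense in $C(\Om)$, set $N := \bigcup_n N_{\varphi_n}$, and define $\widetilde\rho_t := \rho_t$ for $t \in (0,1)\setminus N$. Since each $\widetilde f_{\varphi_n}$ is continuous on $[0,1]$ and the total masses $\rho_t(\Om) = \beta(t)$ are uniformly bounded, the family $\{\widetilde\rho_t\}_{t\notin N}$ is weak-$\ast$ relatively compact in $\M(\Om)$, and any narrow accumulation point along $t_k\to t$ is uniquely pinned down by the prescribed values $\widetilde f_{\varphi_n}(t)$ on the dense family. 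This uniquely extends $\widetilde\rho_t$ to all $t\in[0,1]$, preserves positivity, and yields the sought representative in $\pcurves$ with $\widetilde\rho_t = \rho_t$ for $\mathcal{L}^1$-a.e.\ $t$.

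The main obstacle I anticipate is the implication in the first step that $\alpha'\in \M(0,1)$ forces $\alpha\ll \mathcal{L}^1$: this rules out any concentration in time of the mass of $\rho$ and is exactly what turns $t\mapsto \rho_t(\Om)$ into a genuine $BV$ function rather than merely a singular measure on $(0,1)$. Beyond that, the remainder reduces to a careful separability/diagonal argument together with the standard extraction of a continuous representative of a $W^{1,1}$ function, and the combined bound on $|v|+|g|$ ensures that the diagonal step preserves integrability.
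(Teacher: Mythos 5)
Your argument is correct and follows essentially the same route as the proof the paper relies on (the cited Propositions 2.2 and 2.4 of \cite{bf}): testing \eqref{cont weak} against purely time-dependent test functions to identify the distributional derivative of $\pi_{\#}\rho$ with $\pi_{\#}\mu$, hence absolute continuity in time with a $BV$ density, and then an Ambrosio--Gigli--Savar\'e-type argument extracting the narrowly continuous representative from the $W^{1,1}(0,1)$ functions $t \mapsto \int_{\Om}\varphi_n\,d\rho_t$ for a countable dense family. Only make sure to include $\varphi \equiv 1$ among the $\varphi_n$ (or invoke the essential boundedness of the $BV$ density), so that the uniform mass bound you use in the compactness step holds off the exceptional null set.
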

In the rest of the paper we will identify $\rho_t$ with its narrowly continuous representative $\tilde{\rho}_t$, whenever the assumptions of Lemma \ref{lem:prop cont} hold.

\subsection{Optimal transport energy} \label{sec:benamou}

We now introduce the Wasserstein-Fisher-Rao energy, also known as the  Hellinger-Kantorovich energy, as originally done in \cite{chizat, kmv,liero}. %
To this end, let $\delta>0$ be a fixed parameter. Define the convex, one-homogeneous and lower semi-continuous map $\Psi_\delta \colon \R \times \R^d \times \R \to [0,\infty]$ by setting
\begin{equation} \label{conv conj}
\Psi_\delta (t,x,y):= \begin{cases}
 \frac{|x|^2 +\delta^2 y^2}{2t}    & \text{if } t >0 \,,\\
 0                   & \text{if } t=|x|=y=0 \,, \\
 \infty	         & \text{otherwise}\,,
 \end{cases}
\end{equation} 
where $\infty y^2 = \infty$ for $y \neq 0$ and $\infty y^2 = 0$ for $y=0$.
The Wasserstein-Fisher-Rao energy is given by the map $B_\delta \colon \M_\Om \to [0,\infty]$ defined by
\begin{equation} \label{bb}
B_\delta (\rho,m,\mu):= \int_{X_\Om} \Psi_\delta \left( \frac{d\rho}{d\lambda}, \frac{dm}{d\lambda},\frac{d\mu}{d\lambda}\right) \, d \lambda \,,
\end{equation}
where $\lambda \in \M^+(X_\Om)$ is an arbitrary measure such that $\rho,m,\mu \ll \lambda$.
Definition \eqref{bb} does not depend on the choice of $\lambda$, as $\Psi_\delta$ is one-homogeneous. Properties of the energy $B_\delta$ which are relevant in the following analysis are summarized in Lemma~\ref{lem:prop B} (for a proof see \cite[Proposition~2.6]{bf}).
We now introduce a coercive version of $B_\delta$: Set
\[
\Distr_\Om:= \left\{(\rho,m,\mu) \in \M_\Om \, \colon \, \de_t \rho +  \div m = \mu \, \, \text{in the sense of} \,\, \eqref{cont weak}\right\} \,,
\] 
and define the functional $J_{\alpha, \beta,\delta}\colon \M_\Om \to [0,\infty]$ as
 \begin{equation} \label{bb reg}
  J_{\alpha, \beta,\delta}(\rho,m,\mu) := \begin{cases} 
      \beta B_\delta(\rho, m,\mu) +
      \alpha \norm{\rho}_{\M(X_\Om)} & \,\, \text{ if }(\rho,m,\mu) \in \Distr_\Om, \\ 
       \infty\qquad  & \,\,   \text{ otherwise},
    \end{cases} 
\end{equation}
where $\alpha>0$ and $\beta > 0$ are fixed constants.
We remark that adding the total variation of $\rho$ to $B_\delta$ enforces the balls of $J_{\alpha, \beta,\delta}$ to be  compact in the weak* topology of $\M_\Om$. Such property, together with others, is the object of Lemma \ref{lem:prop J}. The content of Lemma \ref{lem:prop J} is based on results proven in  \cite[Lemmas 4.5, 4.6]{bf}.

\subsection{Characteristics theory for the continuity equation}

We start by recalling a classical result on the theory of ordinary differential equations in $\R^d$ \cite[Lemma 8.1.4]{ags}.

\begin{proposition}\label{prop:globallydefined}
Let $v \colon [0,1] \times \R^d \to \R^d$ be measurable and such that
	\begin{equation}
	\int_0^1 \sup_{x \in \R^d}|v(t,x)| +  \Lip (v(t,\cdot),\R^d)  \, dt < \infty \,. \label{eq:comp1}
	\end{equation}
	Then for each $x \in \R^d$ the ODE 
	\begin{equation} \label{prel:ODE}
	\dot{X}_x(t) = v(t,X_x(t))\,\,  \text{ for a.e. } t \in (0,1)\,, \qquad
	 X_x(0)=x\,,  
	\end{equation}
	admits a unique absolutely continuous  
	solution $t \mapsto X_x(t)$ defined for all $t \in [0,1]$. 
\end{proposition}

Next we provide a representation formula for measure solutions of the continuity equation \eqref{cont}.
This is the analogue of \cite[Lemma 8.1.6]{ags} for the inhomogeneous continuity equation, and a generalization of \cite[Proposition 3.6]{maniglia} to the case of $g$ unbounded.

\begin{proposition} \label{prop:push_forward}
	Let $v \colon [0,1]\times \R^d \to \R^d$, $g \colon [0,1]\times \R^d \to \R$ be measurable. Assume  that 
\begin{equation}	\int_0^1 \sup_{x \in \R^d}|g(t,x)| +  \Lip (g(t,\cdot),\R^d)  \, dt < \infty\label{eq:comp2}
\end{equation}	
and  \eqref{eq:comp1} hold. 
Let $\rho_0 \in \M^+(\R^d)$  and denote by $t \mapsto X_x(t)$ the unique solution to \eqref{prel:ODE} defined for all $t \in [0,1]$ and $x \in \R^d$. Then, the map
	\begin{equation}  \label{prop:push_forward:formula}
	t \mapsto \rho_t :=(X_{(\cdot)}(t))_\#  \left( \rho_0 \, e^{\int_0^t g(s,X_{(\cdot)}(s)) \, ds}  \right)
	\end{equation}
	is a narrowly continuous solution to the continuity equation $\de_t \rho_t + \div (v \rho_t) = g \rho_t$ in $(0,1) \times \R^d$ in the sense of \eqref{cont weak}, where the push-forward in \eqref{prop:push_forward:formula} is with respect to the space variable. 
\end{proposition}

\begin{proof}
Narrow continuity of $t \mapsto\rho_t$ follows immediately from \eqref{eq:comp2}, dominated convergence and the continuity of $t \mapsto X_x(t)$ for each $x$.
Let now $\f \in C^1_c((0,1) \times \R^d)$. Then for $\rho_0$-a.e.~$x$ in $\R^d$, the map $t \mapsto \f(t,X_x(t))$ is absolutely continuous in $(0,1)$, with a.e.~derivative given by
\begin{equation} \label{prop:push_forward:1}
\frac{d}{dt} \, \f(t,X_x(t)) = \de_t \f(t,X_x(t)) + \nabla \f(t,X_x(t)) \cdot v(t,X_x(t))\,,
\end{equation}
thanks to Proposition \ref{prop:globallydefined}. By \eqref{eq:comp2} we also have that 
$t \mapsto \f(t,X_x(t)) e^{\int_0^t g(s,X_x(s)) \, ds}$ is absolutely continuous in $(0,1)$, and for a.e.~$t \in (0,1)$ it holds 
\begin{equation} \label{prop:push_forward:2}
\frac{d}{dt}  \bigg( \f(t,X_x(t)) e^{\int_0^t g(s,X_x(s)) \, ds}   
\bigg) = \bigg( \frac{d}{dt}  \, \f(t,X_x(t))  + 
\f(t,X_x(t)) \, g(t,X_x(t)) \bigg) e^{\int_0^t g(s,X_x(s)) \, ds}\,.
\end{equation}
In particular, it is immediate to check that
\[
\int_0^{1} \int_{\R^d} \left| \frac{d}{dt}  \left( \f(t,X_x(t)) e^{\int_0^t g(s,X_x(s)) \, ds}   
\right)    \right| \, d\rho_0(x) \, dt \leq \norm{\f}_{C^1} \, \rho_0(\R^d) \, e^{M_g} \left( 1 + M_v + M_g \right)\,,
\]
where $M_v := \int_0^1 \sup_{x \in \R^d} |v(t,x)|\,dt$, $M_g := \int_0^1 \sup_{x \in \R^d} |g(t,x)| \, dt$, which are finite by \eqref{eq:comp1}, \eqref{eq:comp2}. Therefore, we can apply Fubini's theorem and \eqref{prop:push_forward:formula}, \eqref{prop:push_forward:1}, \eqref{prop:push_forward:2}, to compute
\[
\begin{aligned}
 \int_{X_{\R^d}} \left(\de_t \f + \nabla \f \cdot v + \f \, g \right) \, d\rho 
 = \int_{\R^d} \int_0^{1}\frac{d}{dt}  \left( \f(t,X_x(t)) e^{\int_0^t g(s,X_x(s)) \, ds}  
\right) \, dt  \, d\rho_0(x)\,,
\end{aligned}
\]
where $\rho=dt \otimes \rho_t$. 
Now notice that the above right-hand side vanishes since $\f$ is compactly supported, concluding the proof. 
\end{proof}

The next proposition states that, under some regularity assumptions, every solution of \eqref{cont weak} can be represented as in \eqref{prop:push_forward:formula}.

\begin{proposition}\label{prop:solutionbypush}
Assume that $\rho_t \colon [0,1]  \to \M^+(\R^d)$ is a narrowly continuous solution to the continuity equation $\de_t \rho_t + \div(v\rho_t ) = g\rho_t  $ in $(0,1) \times \R^d$ in the sense of \eqref{cont weak}, for some Borel maps $v \colon [0,1] \times \R^d \to \R^d$, $g \colon [0,1] \times \R^d \to \R$ satisfying \eqref{eq:comp1},  \eqref{eq:comp2} and
\begin{equation}
\int_0^1 \int_{\R^d}  |v(t,x)|+|g(t,x)|  \, d\rho_t(x) \, dt < \infty\,. \label{eq:comp0}	
\end{equation}
 Then for $\rho_0$-a.e. $x \in \R^d$ the ODE \eqref{prel:ODE} admits a solution $X_x(t)$ for $t \in [0,1]$, and 
\[
\rho_t  =(X_{(\cdot)}(t))_\#  \left( \rho_0 \, e^{\int_0^t g(s,X_{(\cdot)}(s)) \, ds}  \right) \,\,\, \text{ for each } \,\,\, t \in [0,1]\,,
\]
where the push-forward is with respect to the space variable.
\end{proposition}

\begin{proof}
Define the map $t \mapsto \mu_t := (X_{(\cdot)}(t))_\# \left( \rho_0 \, e^{\int_0^t g(s,X_{(\cdot)}(s)) \, ds}  \right)$. Proposition \ref{prop:push_forward} implies that $\mu_t$ is a narrowly continuous solution to the continuity equation in $(0,1) \times \R^d$. Moreover $\mu_0=\rho_0$ by construction. It is immediate to check that $\mu_t - \rho_t$ and $\rho_t -\mu_t$ satisfy \eqref{eq:comp222}. As $\mu_t - \rho_t$ and $\rho_t - \mu_t$ both satisfy the continuity equation,  we can apply (twice) the comparison principle in Proposition \ref{prop:comparison} to deduce that $\mu_t =\rho_t$ for every $t \in [0,1]$.
\end{proof}

\section{Functional analytic setting} \label{sec:functional}

In this  section we discuss the functional analytic setting that is instrumental in proving the superposition principle in Theorem \ref{thm:intro_main}. Throughout the section, $V$ will be the closure of a bounded domain of $\R^d$, with $d \in \N$, $d \geq 1$. 
We recall the notations  $X_V:=(0,1) \times V$ and $\M_V:=\M(X_V) \times \M(X_V;\R^d) \times \M(X_V)$.

\subsection{Curves in cones of measures} \label{sec:cone}
We start by introducing the set
\begin{equation} \label{eq:cone}
\cone_V:=\left\{ h \delta_\gamma \in \M(V) \, \colon \, h \geq 0,\, \gamma \in V \right\} 
\end{equation}
and the space of narrowly continuous curves with values in $\cone_V$, i.e.,
\begin{equation} \label{eq:narr}
\car_V := \left\{(t \mapsto \rho_t) \in \pcurvesV \, \colon \, \rho_t \in \cone_V \, \text{ for all } \, t \in [0,1] \right\}\, .
\end{equation}
Notice that if $t \mapsto \rho_t$ belongs to $\car_V$, then $\rho:=dt \otimes \rho_t$ belongs to $\M(X_V)$. With a little abuse of notation, in what follows, we will denote by $\rho$ both the curve $t \mapsto \rho_t$ and the measure $dt \otimes \rho_t$.

\begin{remark}\label{rem:char}
If $\rho \in \car_V$, then
$\rho_t = h(t) \delta_{\gamma(t)}$
for $h \colon [0,1] \to [0,\infty)$ and $\gamma \colon [0,1] \to V$, where $\gamma$ is uniquely determined in the set $\{h>0\}$.%
\end{remark}

We endow the set $\cone_V$ with the flat distance on $\M (V)$, that is, for $\rho^i \in \cone_V$ we set
\begin{equation} \label{eq:flat_dual}
\D_F(\rho^1,\rho^2) := \sup\left\{\int_{V} \f  \, d(\rho^1 - \rho^2) \, \colon \, \f \in C(V),\, \norm{\f}_{\infty}\leq 1, \, \, {\rm Lip}(\f,V)  \leq 1\right\}\,. 
\end{equation}
We then define a distance over $\car_V$, by setting 
\begin{equation} \label{distance_sup}
\D(\rho^1,\rho^2) := \sup_{t\in [0,1]} \D_F(\rho^1_t,\rho^2_t)\,.
\end{equation}

\begin{remark}\label{rem:compcone}
In \cite{liero2,liero} the authors introduced the cone space over $V$ given by $C_V:=(V \times [0,\infty))/\sim$, where $\sim$ is the equivalence relationship such that the pairs $(\gamma_1,h_1)$ and $(\gamma_2,h_2)$ are identified if and only if $\gamma_1=\gamma_2$ and $h_1=h_2$, or if $h_1=h_2=0$.	 Notice that $C_V$ is in one-to-one correspondence with $\cone_V$. However in \cite{liero2,liero} the cone space is equipped with the cone distance 
\[
H^2(\rho^1,\rho^2) :=
\begin{cases}
h_1 + h_2 - 2\sqrt{h_1h_2} \cos(|\gamma_1 - \gamma_2|) & \,\text{ if } \, |\gamma_1 - \gamma_2|\leq \pi\,, \\
	h_1 + h_2 + 2\sqrt{h_1h_2} & \,\text{ otherwise,}
\end{cases}
\]
for all $\rho^1,\rho^2 \in \cone_V$. By elementary calculations, and employing \eqref{eq:char_flat} below, it is possible to show that $H^2$ and $\D_F$ induce equivalent topologies on $\cone_V$, e.g., there exists a constant $C>0$ such that
\[
\frac{1}{C} H^2(\rho^1 , \rho^2) \leq \D_F(\rho^1 ,\rho^2) \leq C \sqrt{ (h_1 + h_2) H^2(\rho^1 , \rho^2)}\,.
\]
\end{remark}

The following characterization for $\D_F$ holds. 

\begin{lemma}\label{lemma:characterizationflat}
For $\rho^1,\rho^2 \in \cone_V$ we have
\begin{equation} \label{eq:char_flat}
\D_F(\rho^1,\rho^2) = 
\begin{cases}
	|h_1 - h_2| +  \min(h_1,h_2) |\gamma_1 - \gamma_2| & \,\, \text{ if } \,\, |\gamma_1 - \gamma_2| \leq 2 \,,\\
	h_1 + h_2 & \,\, \text{ otherwise.}
\end{cases}
\end{equation}
\end{lemma}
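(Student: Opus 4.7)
Write $\rho^i=h_i\delta_{\gamma_i}$ for $i=1,2$. For any admissible test function $\varphi\in C(V)$ with $\|\varphi\|_\infty\le 1$ and $\Lip(\varphi,V)\le 1$, the integral against $\rho^1-\rho^2$ reduces to $h_1\varphi(\gamma_1)-h_2\varphi(\gamma_2)$, so $\D_F(\rho^1,\rho^2)$ depends on $\varphi$ only through the two real numbers $a:=\varphi(\gamma_1)$ and $b:=\varphi(\gamma_2)$. The plan is to optimize over $(a,b)$ directly and then check that every admissible pair is attained.

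First I identify the feasible set for $(a,b)$. The constraints on $\varphi$ force $a,b\in[-1,1]$ and $|a-b|\le|\gamma_1-\gamma_2|$. Conversely, given any $(a,b)$ satisfying these inequalities, the McShane--Whitney extension formula
\[
\varphi(x):=\max\bigl(-1,\min\bigl(1,\,\max(a-|x-\gamma_1|,\,b-|x-\gamma_2|)\bigr)\bigr)
\]
produces a $1$-Lipschitz function on $V$ with $\|\varphi\|_\infty\le 1$ and $\varphi(\gamma_i)\in\{a,b\}$ at the correct points. Hence
\[
\D_F(\rho^1,\rho^2)=\sup\bigl\{\,h_1 a-h_2 b\,:\,a,b\in[-1,1],\ |a-b|\le|\gamma_1-\gamma_2|\bigr\}.
\]

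Next I solve this two-variable linear program. Assume without loss of generality $h_1\ge h_2\ge 0$; the opposite case is symmetric. Since the objective is increasing in $a$ and decreasing in $b$, the optimum is attained at $a=1$ and $b=\max(-1,\,1-|\gamma_1-\gamma_2|)$.
If $|\gamma_1-\gamma_2|\le 2$, then $b=1-|\gamma_1-\gamma_2|$ and
\[
h_1 a-h_2 b=h_1-h_2+h_2|\gamma_1-\gamma_2|=|h_1-h_2|+\min(h_1,h_2)|\gamma_1-\gamma_2|.
\]
If $|\gamma_1-\gamma_2|> 2$, then $b=-1$ and $h_1 a-h_2 b=h_1+h_2$. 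This yields exactly the two cases of \eqref{eq:char_flat}.

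There are essentially no obstacles here: the argument is a direct computation once the reduction to the pair $(a,b)$ and the McShane extension are in place. The only point requiring a minor sanity check is the degenerate case $h_1=0$ or $h_2=0$, where the representation $\rho^i=h_i\delta_{\gamma_i}$ is not unique in the cone $\cone_V$; in that situation both formulas in \eqref{eq:char_flat} collapse to $|h_1-h_2|=h_1+h_2$, so the statement remains consistent irrespective of the choice of $\gamma_i$.
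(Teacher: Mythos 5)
Your proof is correct and follows essentially the same route as the paper: both reduce $\D_F$ to the two-variable linear program over $(\varphi(\gamma_1),\varphi(\gamma_2))$ with constraints $|c_i|\le 1$, $|c_1-c_2|\le|\gamma_1-\gamma_2|$, and then identify the optimal pair $(1,\,1-|\gamma_1-\gamma_2|)$ or $(1,-1)$. The only difference is that you make explicit (via the truncated McShane--Whitney extension) the attainability of every feasible pair, which the paper leaves implicit, and you also note the degenerate case $h_i=0$; both are harmless additions.
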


\begin{proof}
By definition it follows that
\[
\D_F(\rho^1,\rho^2) = \sup_{c_1,c_2 \in \R} \left\{h_1 c_1 - h_2 c_2 \, \colon \, |c_1|, |c_2|\leq 1 ,\, |c_1 - c_2| \leq  |\gamma_1 - \gamma_2|\right\}\,. 
\]
By symmetry we can assume $h_1 \geq h_2$. For all $c_1,c_2 \in \R$ such that $|c_1|,|c_2| \leq 1$ and $|c_1-c_2|\leq  |\gamma_1-\gamma_2|$, we estimate
\[
\begin{aligned}
h_1 c_1 - h_2 c_2 & \leq |h_1 c_1 - h_2 c_1|+|h_2 c_1 - h_2 c_2|
 \leq |h_1 - h_2| +  \, \min(h_1,h_2) |\gamma_1 - \gamma_2|\,.
\end{aligned}
\]
The thesis follows since the supremum is achieved by
$(1 ,  1 -  |\gamma_1 - \gamma_2|)$ if $|\gamma_1- \gamma_2| \leq 2$ and by $(1,-1)$ otherwise.\end{proof}

We will now show that the metric space $(\car_V,\D)$ can be identified with $C([0,1];\cone_V)$, where $\cone_V$ is equipped with $\D_F$ and $C([0,1];\cone_V)$ inherits the relative topology as a subset of $C([0,1];\M_{\rm flat}(V))$, $\M_{\rm flat}(V)$ being the space $\M(V)$ equipped with the flat norm. In order to achieve that, we need a preliminary lemma.

\begin{lemma}\label{lem:narrow_cone}
Let $\rho_t \colon [0,1] \to \cone_V$. Then the following statements are equivalent:
\begin{enumerate}[(i)]
\item $\rho_t$ is narrowly continuous,
\item $\rho_t = h(t)\delta_{\gamma(t)}$ with $h \in C[0,1]$ and $\gamma \in C(\{h> 0\};\R^d)$.	
\end{enumerate}
\end{lemma}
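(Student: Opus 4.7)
The strategy is to exploit the representation $\rho_t = h(t)\delta_{\gamma(t)}$ from Remark \ref{rem:char} and to establish each implication by testing narrow convergence against carefully chosen continuous functions, distinguishing the behavior on $\{h>0\}$ from the behavior on $\{h=0\}$.

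\textbf{From (ii) to (i).} Given $\f \in C(V)$, I would compute $\int_V \f \, d\rho_t = h(t)\f(\gamma(t))$ and verify continuity pointwise in $t$. At any $t_0$ with $h(t_0)>0$, continuity of $h$ implies that $\{h>0\}$ is relatively open in $[0,1]$ and contains a neighborhood of $t_0$; on that neighborhood both $h$ and $\gamma$ are continuous, so the product $t \mapsto h(t)\f(\gamma(t))$ is continuous at $t_0$. At any $t_0$ with $h(t_0)=0$, I would use that $\f$ is bounded on the compact set $V$ to estimate $|h(t)\f(\gamma(t))| \leq h(t)\norm{\f}_\infty \to 0 = h(t_0)\f(\gamma(t_0))$, regardless of the choice of $\gamma(t_0)$.

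\textbf{From (i) to (ii).} Testing against the constant function $\f \equiv 1$ yields $h(t) = \rho_t(V)$, and narrow continuity forces $h \in C[0,1]$; in particular $\{h>0\}$ is open in $[0,1]$. To prove continuity of $\gamma$ on $\{h>0\}$ I would test against each coordinate function $\pi_i(x) = x_i$, which lies in $C(V)$ by boundedness of $V$, and obtain on $\{h>0\}$ the identity
\[
\gamma_i(t) = \frac{1}{h(t)} \int_V \pi_i(x)\, d\rho_t(x)\,.
\]
The numerator is continuous by narrow continuity of $\rho_t$, the denominator is continuous and nonzero locally around any point of $\{h>0\}$, and hence $\gamma_i$, and therefore $\gamma$, is continuous on $\{h>0\}$.

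\textbf{Main obstacle.} The only delicate point is the correct bookkeeping on the zero-mass set: by Remark \ref{rem:char} the map $\gamma$ is uniquely determined only on $\{h>0\}$, so statement (ii) asks for continuity only there, and one must avoid any argument that would implicitly pin down a value of $\gamma$ on $\{h=0\}$ or require it to match a limit from $\{h>0\}$. The openness of $\{h>0\}$, inherited from the continuity of $h$, is what keeps both directions clean and makes the coordinate-wise quotient representation well-defined near any point of $\{h>0\}$.
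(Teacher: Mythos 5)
Your proposal is correct and follows essentially the same route as the paper: testing narrow continuity against $\f\equiv 1$ and the coordinate functions to get continuity of $h$ and of $h\gamma$ (hence of $\gamma$ on $\{h>0\}$), and, for the converse, splitting into the cases $h(t_0)>0$ and $h(t_0)=0$, using boundedness of $\f$ on the zero-mass set. The quotient formula for $\gamma_i$ and the explicit remark on openness of $\{h>0\}$ are only cosmetic variations of the paper's argument.
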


\begin{proof}
	Assume (i), so that the map 
	$t \mapsto  h(t)\f (\gamma(t))$
	is continuous for each $\f \in C(V)$. By choosing $\f \equiv 1$  we conclude that $h$ is continuous. If we pick $\f(x):=x_i$ coordinate function, for all $i=1,\ldots,d$, we also infer continuity for $h\gamma$, so that $\gamma$ is continuous in $\{h>0\}$.
	Conversely, assume (ii). Let $\f \in C(V)$ and $\hat t \in [0,1]$. If $h(\hat{t})=0$, we conclude continuity of $t \mapsto  h(t)\f (\gamma(t))$ at $\hat t$ by boundedness of $\f$ and continuity of $h$, while if $h(\hat t)>0$, we conclude by (ii).%
\end{proof}

\begin{proposition} \label{prop:narrow}
Assume that $\rho_t \colon [0,1] \to \cone_V$. Then the following statements are equivalent:
\begin{enumerate}[(i)]
\item $\rho_t$ is narrowly continuous,
\item $\rho_t$ is continuous with respect to $\D_F$.
\end{enumerate}
In particular, we have that $(\car_V, \D)$ is a metric space that can be identified with $C([0,1];\cone_V)$.%
\end{proposition}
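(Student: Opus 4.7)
The plan is to prove the equivalence $(i) \Leftrightarrow (ii)$ directly from Lemmas \ref{lem:narrow_cone} and \ref{lemma:characterizationflat}. The identification $(\car_V,\D) \cong C([0,1];\cone_V)$ then follows tautologically: by definition $\car_V$ consists of narrowly continuous curves in $\cone_V$, which by the equivalence coincide with the continuous curves in $(\cone_V, \D_F)$; meanwhile \eqref{distance_sup} is precisely the uniform metric induced by $\D_F$ on this space of continuous curves.

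For the forward direction $(i)\Rightarrow(ii)$, I would first invoke Lemma \ref{lem:narrow_cone} to write $\rho_t = h(t)\delta_{\gamma(t)}$ with $h \in C[0,1]$ and $\gamma \in C(\{h>0\};V)$. Fix $t \in [0,1]$ and a sequence $t_n \to t$, and apply Lemma \ref{lemma:characterizationflat}. Two cases arise. If $h(t) > 0$, continuity of $h$ ensures $h(t_n) > 0$ eventually, hence also $\gamma(t_n) \to \gamma(t)$, and the first branch of \eqref{eq:char_flat} yields $\D_F(\rho_{t_n},\rho_t) \to 0$. If $h(t) = 0$, either branch of \eqref{eq:char_flat} majorizes $\D_F(\rho_{t_n},\rho_t)$ by $h(t_n) + h(t)$, which vanishes by continuity of $h$.

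The reverse direction $(ii)\Rightarrow(i)$ is a standard approximation. Since $\f \equiv 1$ is admissible in \eqref{eq:flat_dual}, the continuity of $t \mapsto \rho_t$ with respect to $\D_F$ implies continuity, and hence uniform boundedness by some $M > 0$, of $t \mapsto h(t) = \rho_t(V)$. For arbitrary $\f \in C(V)$ and $\e > 0$, compactness of $V$ yields a Lipschitz function $\f_\e$ with $\norm{\f-\f_\e}_\infty < \e$. Setting $L_\e := \max(\norm{\f_\e}_\infty, \Lip(\f_\e,V))$, the rescaled function $\f_\e/L_\e$ is admissible in \eqref{eq:flat_dual}, whence for all $s,t \in [0,1]$
\[
\left|\int_V \f \, d(\rho_s - \rho_t)\right| \leq \e\bigl(h(s) + h(t)\bigr) + L_\e \, \D_F(\rho_s,\rho_t) \leq 2\e M + L_\e \, \D_F(\rho_s,\rho_t).
\]
Letting $s \to t$ by hypothesis (ii) and then $\e \to 0$ delivers narrow continuity at $t$.

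The only mild subtlety is the case $h(t) = 0$ in $(i)\Rightarrow(ii)$, in which $\gamma$ need not be well-defined or continuous at $t$; this is painlessly absorbed by the second branch of \eqref{eq:char_flat}, whose bound $h_1 + h_2$ depends only on the weights. All remaining steps are routine, and no further compactness or functional-analytic tools are needed beyond what has already been established.
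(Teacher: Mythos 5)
Your proof is correct. The forward implication $(i)\Rightarrow(ii)$ is essentially the paper's argument: invoke Lemma \ref{lem:narrow_cone}, split on $h(t)>0$ versus $h(t)=0$, and use \eqref{eq:char_flat}. The reverse implication, however, takes a genuinely different route. The paper stays inside the cone structure: it reads off continuity of $h$ from $|h(t_1)-h(t_2)|\leq \D_F(\rho_{t_1},\rho_{t_2})$, shows by a small contradiction argument that $|\gamma(t_n)-\gamma(t)|\leq 2$ eventually when $h(t)>0$ so that \eqref{eq:char_flat} forces $\gamma(t_n)\to\gamma(t)$, and then concludes via the equivalence in Lemma \ref{lem:narrow_cone}. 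You instead prove narrow continuity directly from the dual formula \eqref{eq:flat_dual}: uniform approximation of an arbitrary $\f\in C(V)$ by a Lipschitz $\f_\e$ (available since $V$ is compact, e.g.\ by inf-convolution), rescaling by $L_\e$ to make it admissible, and using the mass bound $h(s)+h(t)\leq 2M$ coming from continuity of $h$. This is the same device the paper itself deploys later, in the proof of Proposition \ref{lem:compact_sublevels}, to show that $\D$-convergence implies weak* convergence; your observation is that it already settles $(ii)\Rightarrow(i)$ and in fact proves the more general statement that flat continuity plus a uniform mass bound yields narrow continuity for arbitrary curves of positive measures on a compact set, with no cone structure needed. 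What the paper's route buys in exchange is the explicit continuity of the pair $(h,\gamma)$ on $\{h>0\}$, which it reuses elsewhere; your route never produces this, but for the proposition as stated it is not needed. Two trivial points to tidy up if you write it out: handle the degenerate case $L_\e=0$ (constant $\f_\e$) separately, and note that narrow continuity here is tested against $C(V)=C_b(V)$ precisely because $V$ is compact. Your treatment of the identification with $C([0,1];\cone_V)$ matches the paper's.
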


\begin{proof}
Assume (i), so that $h \in C[0,1]$ and $\gamma \in C(\{h>0\};\R^d)$ by Lemma \ref{lem:narrow_cone}. Fix $t \in [0,1]$ and $t_n \to t$. %
If $h(t)=0$, by continuity of $h$ and \eqref{eq:char_flat} we infer $\D_F(\rho_{t_n},\rho_t)=h(t_n) \to 0$. If instead $h(t)>0$, by continuity of $\gamma$ in $t$, it holds that 
 $|\gamma(t_n)-\gamma(t)|\leq 2$ for $n$ sufficiently large. 
By continuity of $h$ we conclude (ii). Conversely, assume (ii). In order to show (i), 
 we prove that $h \in C[0,1]$ and $\gamma \in C(\{h>0\};\R^d)$ (Lemma \ref{lem:narrow_cone}). From \eqref{eq:char_flat} we have
 $|h(t_1) - h(t_2)| \leq  \D_F(\rho_{t_1},\rho_{t_2})$ for all $t_1,t_2 \in [0,1]$, so that $h$ is continuous by (ii).
Let us now fix $t \in \{h >0\}$ and $t_n \to t$. Since $h(t)>0$, it is immediate to check by contradiction that $|\gamma(t_n)- \gamma(t)| \leq 2$ eventually, and hence
\begin{equation}\label{prop:narrow:3}
  |h(t_n)-h(t)| + \min (h(t_n),h(t)) 
 |\gamma(t_n) - \gamma (t)| = \D_F(\rho_{t_n},\rho_t)\,,
\end{equation}
for sufficiently large $n$. By continuity of $h$, (ii), and the assumption $h(t)>0$, we conclude continuity for $\gamma$, and hence (i). The final part of the statement follows from the first part and from the definition of $\D$.
\end{proof}

For the space $(\mathscr{S}_V,\D)$ the following holds.

\begin{proposition} \label{prop:complete}
We have that $(\mathscr{S}_V,\D)$ is a complete separable metric space.    
\end{proposition}

The above statement is somewhat classical. However, due to the lack of a reference, we provide a proof in Section \ref{app:complete}. 
We conclude this section with a useful lemma that provides sufficient conditions for continuity and measurability for scalar maps on $(\car_V,\D)$.

\begin{lemma}\label{lem:evaluationsecond}
Let $\f : V \times  [0,\infty)  \rightarrow \R$ be such that $\f(x,0) = 0$ for all $x \in V$. For $t\in [0,1]$ define the map $\Psi_t : \car_V \rightarrow \R$ by
$\Psi_t(\rho) := \f(\gamma(t), h(t))$, 
where $\rho_t=h(t)\delta_{\gamma(t)}$. If $\f$ is measurable (resp. continuous), then $\Psi_t$ is measurable (resp. continuous) with respect to $\D$.
\end{lemma}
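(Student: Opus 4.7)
The map $\Psi_t$ factors as $\Psi_t = \Phi \circ E_t$, where $E_t \colon \car_V \to \cone_V$ is the evaluation $E_t(\rho) := \rho_t$ and $\Phi \colon \cone_V \to \R$ is given by $\Phi(h\delta_\gamma) := \f(\gamma,h)$. The factorization is well-defined: if $h = 0$ then $h\delta_\gamma = 0$ regardless of $\gamma$, and $\f(\gamma,0)=0$ by hypothesis guarantees that $\Phi$ does not depend on the choice of representative. Moreover $E_t$ is $1$-Lipschitz from $(\car_V,\D)$ to $(\cone_V,\D_F)$, directly from the definition \eqref{distance_sup} of $\D$, hence continuous and Borel measurable. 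Thus it remains to establish the corresponding properties of $\Phi$.

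For the continuous case, assume $\f \in C(V \times [0,\infty))$ and let $\mu_n \to \mu$ in $(\cone_V,\D_F)$, with $\mu_n = h_n \delta_{\gamma_n}$ and $\mu = h \delta_\gamma$. Using the characterization of $\D_F$ from Lemma~\ref{lemma:characterizationflat}, one has $|h_n - h| \leq \D_F(\mu_n,\mu)$, so $h_n \to h$. If $h > 0$, then $h_n > 0$ eventually and the alternative $\D_F(\mu_n,\mu) = h_n + h$ is excluded for large $n$, forcing $|\gamma_n - \gamma| \leq 2$ and $\min(h_n,h)|\gamma_n - \gamma| \to 0$, whence $\gamma_n \to \gamma$; continuity of $\f$ yields $\Phi(\mu_n) \to \Phi(\mu)$. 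If instead $h = 0$, then $\mu = 0$ and $\Phi(\mu)=0$; whenever $h_n > 0$, compactness of $V$ guarantees that every subsequence of $(\gamma_n)$ has a further subsequence converging to some $\gamma^\ast \in V$, along which $\f(\gamma_n,h_n) \to \f(\gamma^\ast, 0) = 0$ by continuity and the standing assumption on $\f$. A standard subsequence argument gives $\Phi(\mu_n) \to 0 = \Phi(\mu)$.

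For the measurable case, decompose $\cone_V = \{0\} \sqcup U$ with $U := \{\mu \in \cone_V : h(\mu)>0\}$, where $h(\mu)$ denotes total mass. The singleton $\{0\}$ is closed and $U$ is open. Fix any $\gamma_0 \in V$ and define $F \colon \cone_V \to V \times [0,\infty)$ by $F(h\delta_\gamma) = (\gamma,h)$ on $U$ and $F(0) = (\gamma_0,0)$. On $U$ the map $F$ is continuous: the coordinate $h$ is $1$-Lipschitz in $\D_F$, while arguing exactly as in the continuity case above shows that $\gamma$ is continuous on $U$. On $\{0\}$ the map $F$ is constant, hence Borel. For any Borel set $B \subset V \times [0,\infty)$, the preimage $F^{-1}(B) \cap U$ is Borel in $U$ (hence in $\cone_V$ since $U$ is open) and $F^{-1}(B) \cap \{0\}$ is either empty or $\{0\}$, so $F$ is Borel measurable on $\cone_V$. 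Since $\f(\gamma_0,0)=0=\Phi(0)$, we have $\Phi = \f \circ F$ on all of $\cone_V$, and measurability of $\Phi$ follows from measurability of $\f$.

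The main subtlety to watch for is precisely at the point $\mu = 0$: the representation $h\delta_\gamma$ is not unique there, so neither $\gamma$ nor any continuous selection of it is available near $0$. The hypothesis $\f(\gamma,0)=0$ for all $\gamma$ is what reconciles this, both in ruling out spurious discontinuities (by forcing the value of $\Phi$ to be $0$ on $\{h=0\}$ regardless of how $\gamma_n$ might oscillate as $h_n \to 0^+$) and in making the choice of $\gamma_0$ in the definition of $F$ irrelevant.
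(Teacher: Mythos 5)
Your proof is correct and follows essentially the same route as the paper: the factorization $\Psi_t = \f \circ F \circ E_t$ is exactly the paper's $\f \circ \pi \circ e_t$ (with the same device of sending the zero measure to an arbitrary fixed point of $V$, harmless because $\f(\cdot,0)=0$), and the continuity argument uses the same case split on $h(t)>0$ versus $h(t)=0$, the same contradiction argument to force $|\gamma_n-\gamma|\leq 2$, and compactness of $V$ for the vanishing-mass case. Your justification that $F$ is Borel via the decomposition of $\cone_V$ into the open set $\{h>0\}$ and the closed singleton $\{0\}$ is just a slightly more explicit version of the paper's measurability claim for $\pi$.
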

\begin{proof}
Notice that the condition $\f(x,0)=0$ for all $x \in V$ implies that $\Psi_t$ is well defined. 
Suppose first that $\varphi$ is continuous and  
assume that $\D(\rho^n,\rho)\to 0$ as $n \to \infty$. 
By \eqref{eq:char_flat} we have $|h_n(t)-h(t)| \leq \D_F(\rho^n_t,\rho_t)$, so that $h_n(t) \to h(t)$. If $h(t)=0$, then $\rho_t=0$ and $\Psi_t(\rho)=0$. By continuity of $\f$ and compactness of $V$ we infer that $\Psi_t(\rho^n) \to 0$. If $h(t)>0$, the usual argument by contradiction implies that $|\gamma_n(t)-\gamma(t)|\leq 2$ for $n$ sufficiently large. 
Thus by \eqref{eq:char_flat} and the convergences $\min (h_n(t), h(t)) \to h(t)>0$ and $\D_F(\rho_t^n,\rho_t) \to 0$, we have that $\gamma_n(t) \to \gamma(t)$. By continuity of $\f$ we conclude $\Psi_t(\rho^n) \to \Psi_t(\rho)$.
Suppose now that $\varphi$ is measurable. 
Define the evaluation map $e_t \colon \car_V \to \cone_V$ by $e_t(\rho):=\rho_t$ and the projection $\pi \colon \cone_V \to V \times [0,\infty)$ by 
\[
\pi(h\delta_\gamma):= 
(\gamma,h) \, \rchi_{\cone_V \smallsetminus \{(0,0)\}} (\gamma,h) + (p,0) \, \rchi_{\{(0,0)\}} (\gamma,h)\,,
\]
where $p \in V$ is arbitrary but fixed. Notice that by construction $e_t$ is continuous from $(\car_V,\D)$ into $(\cone_V,\D_F)$. Additionally the map $h\delta_\gamma \mapsto (\gamma,h)$ is continuous in $\cone_V \smallsetminus \{(0,0)\}$ by repeating the above arguments. Hence $\pi$ is measurable, being sum of measurable functions. Noting that $\Psi_t= \varphi\circ \pi \circ e_t$, we see that $\Psi_t$ is measurable.  
\end{proof}

\subsection{Properties of the Hellinger-Kantorovich energy over $\cone_V$}

In this section we investigate some properties of the coercive version of the Hellinger-Kantorovich energy at \eqref{bb reg} when restricted to measures belonging to $\car_V$. To be more precise, we consider the functional $\mathscr{F} : \car_V \rightarrow [0,\infty]$ defined by
\begin{equation} \label{inf_F}
\mathscr{F}(\rho) := \inf\{J_{\alpha,\beta,\delta}(\rho,m,\mu) : (m,\mu) \in \mathcal{M}(X_V;\R^d) \times \mathcal{M}(X_V)\}\,,
\end{equation}
where $J_{\alpha,\beta,\delta}$ is defined at \eqref{bb reg} and $\alpha,\beta,\delta>0$. 
We start by introducing the subset of $\car_V$
\begin{equation} \label{eq:char_smooth}
\Ha_V :=  \left\{ \rho_t = h(t)\delta_{\gamma(t)} \in \car_V \, \colon \,   h , \, \sqrt{h} \in \AC^2[0,1], \\ 
 \sqrt{h}\gamma \in\AC^2 ([0,1];\R^d) \right\}\,.
\end{equation}
As already mentioned in the introduction, we denote by $\AC^2$ the set of absolutely continuous functions with a.e.~derivative in $L^2$ (see \cite[Section 1.1]{ags} for a precise definition).

\begin{lemma}\label{lem:hgammasolutioncontinuity}
Let $\rho_t = h(t)\delta_{\gamma(t)} \in \Ha_V$, $b \in C^1(V)$. Then $h (b \circ\gamma) \in \AC^2[0,1]$ with
\begin{equation}\label{eq:productruleeq}
(h(t)b(\gamma(t)))' =  \dot{h}(t)b(\gamma(t)) + h(t) \nabla b(\gamma(t)) \cdot \dot{\gamma}(t) \quad a.e. \ \text{in } \ (0,1)\,.
\end{equation}
\end{lemma}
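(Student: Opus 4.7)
The plan is to exploit the decomposition $h(t) b(\gamma(t)) = \sqrt{h(t)} \cdot \bigl[\sqrt{h(t)} b(\gamma(t))\bigr]$ and show each factor belongs to $\AC^2[0,1]$, upon extending functions involving $\gamma$ by $0$ on $\{h = 0\}$; this is consistent since boundedness of $b$ on the compact set $V$ forces both $h\,b(\gamma)$ and $\sqrt{h}\,b(\gamma)$ to vanish wherever $h$ does. Since $\sqrt{h} \in \AC^2$ by hypothesis, the task reduces to proving that $G(t) := \sqrt{h(t)}\, b(\gamma(t))$, extended by $0$, also lies in $\AC^2[0,1]$. Once this is established, the product rule for bounded $\AC^2$ functions yields $h\,b(\gamma) \in \AC^2[0,1]$ with a.e.\ derivative $(\sqrt{h})' G + \sqrt{h}\, \dot G$.

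For the absolute continuity of $G$, I would set $L := \sup_V |\nabla b|$ and $M := \sup_V |x|$ (both finite by compactness of $V$), and denote $F(t) := \sqrt{h(t)}\gamma(t)$, which lies in $\AC^2([0,1];\R^d)$ by hypothesis. For any $a, b \in [0,1]$ with $h(a), h(b) > 0$, a triangle inequality and the Lipschitz bound on $b$ give
\[
|G(b) - G(a)| \leq \|b\|_\infty\, |\sqrt{h(b)} - \sqrt{h(a)}| + L\, \sqrt{h(a)}\,|\gamma(b) - \gamma(a)|\,,
\]
while adding and subtracting $\sqrt{h(b)}\gamma(b) = F(b)$ inside $\sqrt{h(a)}|\gamma(b) - \gamma(a)|$ yields
\[
\sqrt{h(a)}\,|\gamma(b) - \gamma(a)| \leq M\, |\sqrt{h(b)} - \sqrt{h(a)}| + |F(b) - F(a)|\,.
\]
When either endpoint satisfies $h = 0$, $G$ vanishes there and an analogous bound is immediate. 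Summing over disjoint intervals reduces control of $G$ to the absolute continuity of $\sqrt{h}$ and $F$. To show $\dot G \in L^2$, I observe that on the open set $\{h > 0\}$ the relation $\gamma = F/\sqrt{h}$ makes $\gamma$ locally absolutely continuous, and a direct substitution of $\dot\gamma = (\dot F - \gamma (\sqrt{h})')/\sqrt{h}$ yields
\[
\dot G = (\sqrt{h})'\bigl[b(\gamma) - \nabla b(\gamma) \cdot \gamma\bigr] + \nabla b(\gamma) \cdot \dot F \quad \text{a.e.~on } \{h > 0\}\,,
\]
a sum of bounded-times-$L^2$ terms; while $\dot G = 0$ a.e.\ on $\{h = 0\}$ since $G$ vanishes identically there.

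For the identity \eqref{eq:productruleeq}, on $\{h > 0\}$ the chain rule applies directly to $b(\gamma)$, and using $\dot h = 2\sqrt{h}(\sqrt{h})'$ one rewrites $(\sqrt{h})' G + \sqrt{h}\,\dot G$ as $\dot h\, b(\gamma) + h\,\nabla b(\gamma)\cdot \dot\gamma$. On $\{h = 0\}$ I invoke the classical fact that $\dot h = 0$ almost everywhere on the zero set of a non-negative absolutely continuous function, so both sides of \eqref{eq:productruleeq} vanish there, with the convention that $h\cdot\dot\gamma$ is interpreted as $0$ regardless of $\gamma$'s behavior.

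The main obstacle will be the first step, namely establishing absolute continuity of $G$: the trajectory $\gamma$ is \emph{a priori} only continuous on the open set $\{h > 0\}$ and need not be absolutely continuous on all of $[0,1]$, so no direct chain rule is available. The crucial device is multiplication by $\sqrt{h}$, which simultaneously absorbs the potential singularities of $\gamma$ at zeros of $h$ and couples the estimates to the $\AC^2$ regularity of $F = \sqrt{h}\gamma$, turning an otherwise ill-posed composition into a manageable one.
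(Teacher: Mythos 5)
Your proof is correct, and it reaches the conclusion by a slightly different decomposition than the paper. The paper first observes that $h\gamma=\sqrt{h}\cdot(\sqrt{h}\gamma)$ belongs to $\AC^2([0,1];\R^d)$ and then runs a single two-point estimate, bounding $|h(t)b(\gamma(t))-h(s)b(\gamma(s))|$ by $\Lip(b)\,|h(s)\gamma(s)-h(t)\gamma(t)|$ plus a constant times $|h(s)-h(t)|$, which gives $h(b\circ\gamma)\in\AC^2[0,1]$ in one stroke; you instead factor $h\,b(\gamma)=\sqrt{h}\cdot\bigl[\sqrt{h}\,b(\gamma)\bigr]$, prove $G=\sqrt{h}\,b(\gamma)\in\AC^2$ by an analogous telescoping estimate against $\sqrt{h}$ and $F=\sqrt{h}\gamma$, and then multiply two bounded $\AC^2$ factors. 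Both routes use exactly the same endgame: local absolute continuity of $\gamma$ on the open set $\{h>0\}$ to justify the chain rule there, and the classical fact that the a.e.\ derivative of an absolutely continuous function vanishes a.e.\ on its zero set (the paper cites \cite[Theorem 4.4]{evansgariepy}) to handle $\{h=0\}$, with $h\,\dot\gamma$ interpreted as $0$ there. Your version costs one extra layer (establishing $\dot G\in L^2$ via the substitution $\dot\gamma=(\dot F-\gamma(\sqrt h)')/\sqrt h$) but works directly from the hypotheses $\sqrt h,\,\sqrt h\gamma\in\AC^2$ without first deriving $h\gamma\in\AC^2$; the paper's estimate is marginally shorter. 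The only cosmetic flaw is the clash of notation where $b$ denotes both the test function and an endpoint of $[0,1]$.
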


\begin{proof}
By definition of $\Ha_V$, it follows that $h\gamma \in \AC^2([0,1];\R^d)$. For every $0\leq t\leq s\leq 1$
\begin{align*}
|h(t) b(\gamma(t))  & - h(s) b(\gamma(s))|  \leq {\rm Lip}(b) \, h(t) |\gamma(s) - \gamma(t)| + \|b \|_\infty |h(s) - h(t)|\\
& \leq  {\rm Lip}(b) \,  |h(s)\gamma(s) - h(t)\gamma(t)| +  \|\gamma\|_\infty  {\rm Lip}(b) \, |h(s) - h(t)|+ \|b\|_\infty |h(s) - h(t)|\,.
\end{align*}
Hence $h (b \circ\gamma) \in \AC^2[0,1]$. From the regularity assumed, we immediately infer the product rule at \eqref{eq:productruleeq} for a.e.~$t \in \{h>0\}$. Moreover, using that $h (b\circ\gamma) \in \AC^2([0,1];\R^d)$, we have $(h(t)b(\gamma(t)))' = 0$ almost everywhere in $\{h=0\}$ (\cite[Theorem 4.4]{evansgariepy}), so that \eqref{eq:productruleeq} follows. 
\end{proof}

\begin{proposition}\label{prop:sublevelJ}
Let $\rho_t = h(t)\delta_{\gamma(t)}  \in \car_V$ and $(m,\mu)  \in \mathcal{M}(X_V;\R^d) \times \mathcal{M}(X_V)$ be such that $J_{\alpha,\beta,\delta}(\rho,m,\mu) < \infty$. Then the following properties hold:
\begin{itemize}
\item[i)] There exist  $v\colon X_V \to \R^d$, $g \colon X_V \to \R$ measurable maps such that $m=v\rho$, $\mu = g\rho$,
\item[ii)] $\dot{\gamma}(t) = v(t,\gamma(t))$ for  a.e. $t \in \{h>0\}$ and $\dot h (t)= g(t,\gamma(t)) h(t)$  for a.e. $t \in (0,1)$,
\item[iii)] The curve $t \mapsto \rho_t$ belongs to $\Ha_V$. 
\end{itemize}
Moreover the energy $J_{\alpha,\beta,\delta}$ can be computed by
\begin{equation} \label{eq:J_char}
J_{\alpha,\beta,\delta}(\rho,m,\mu)=  \int_{\{h>0\}} \frac{\beta}{2} \, |\dot{\gamma}(t)|^2 h(t) + \frac{\beta \delta^2}{2} \, \frac{|\dot{h}(t)|^2}{h(t)} + \alpha h(t) \, dt\,.
\end{equation}
Conversely, let $\rho_t = h(t)\delta_{\gamma(t)}  \in \Ha_V$ and set $m:= h(t) \dot{\gamma}(t)\, dt \otimes \delta_{\gamma(t)}$, $\mu:=\dot h(t) \, dt \otimes \delta_{\gamma(t)}$. Then $(\rho,m,\mu)$ belongs to $\M_V$ and solves the continuity equation \eqref{cont weak} in $X_V$. Moreover $J_{\alpha,\beta,\delta}(\rho,m,\mu)<\infty$ and \eqref{eq:J_char} holds.  
\end{proposition}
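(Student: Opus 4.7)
The plan is to first establish items (i)--(ii) from the energy bound, then derive the $\AC^2$ regularity (iii) and the energy identity \eqref{eq:J_char}, and finally treat the converse. Since $J_{\alpha,\beta,\delta}(\rho,m,\mu) < \infty$ entails $B_\delta(\rho,m,\mu) < \infty$, the definition \eqref{conv conj} of $\Psi_\delta$ forces $m,\mu \ll \rho$, yielding $m = v\rho$ and $\mu = g\rho$ for measurable $v,g$, which proves (i). Choosing $\lambda = \rho$ in \eqref{bb} gives
\[
B_\delta(\rho,m,\mu) = \int_0^1 \frac{|v(t,\gamma(t))|^2 + \delta^2 g(t,\gamma(t))^2}{2}\,h(t)\,dt,
\]
so $\sqrt h\, v(\cdot,\gamma)$ and $\sqrt h\, g(\cdot,\gamma)$ lie in $L^2(0,1)$. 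Testing the distributional continuity equation against $\varphi(t,x) = a(t)$ and $\varphi(t,x) = a(t)\,x_i$ with $a \in C^\infty_c(0,1)$ gives, respectively, $\dot h = g(\cdot,\gamma)h$ and $(h\gamma_i)^{\cdot} = v_i(\cdot,\gamma)h + g(\cdot,\gamma)\gamma_i h$ in the sense of distributions on $(0,1)$. The first relation, combined with $g(\cdot,\gamma)h \in L^1$ via Cauchy--Schwarz, gives $h \in \AC[0,1]$ with the claimed ODE; dividing the second by $h$ on the open set $\{h>0\}$, where $h$ is locally bounded below, yields $\dot\gamma = v(\cdot,\gamma)$ a.e.\ in $\{h>0\}$, establishing (ii).

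For (iii) and \eqref{eq:J_char} I would read off pointwise regularity from the ODEs: on $\{h>0\}$ one has $|\dot h|^2 = g(\cdot,\gamma)^2 h^2 \leq \|h\|_\infty\, g(\cdot,\gamma)^2 h$ and $(\sqrt h)^{\cdot} = g(\cdot,\gamma)\sqrt h/2$, so both $|\dot h|^2$ and $|(\sqrt h)^{\cdot}|^2$ belong to $L^1(0,1)$; together with the vanishing of their weak derivatives a.e.\ on $\{h=0\}$ (applying \cite[Theorem 4.4]{evansgariepy} to the nonnegative AC functions $h$ and $\sqrt h$) this gives $h,\sqrt h \in \AC^2[0,1]$. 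For $\sqrt h\gamma$, the product rule on $\{h>0\}$ produces $(\sqrt h \gamma)^{\cdot} = (g(\cdot,\gamma)\sqrt h /2)\gamma + \sqrt h\, v(\cdot,\gamma)$, whose squared $L^2$ norm is controlled by the $L^2$ norms of $g(\cdot,\gamma)\sqrt h$ and $v(\cdot,\gamma)\sqrt h$ and the boundedness of $V$, hence $\sqrt h\gamma \in \AC^2([0,1];\R^d)$. Formula \eqref{eq:J_char} then follows by substituting $v = \dot\gamma$ and $g = \dot h/h$ on $\{h>0\}$ into the expression above for $B_\delta$ and using $\|\rho\|_{\M} = \int_{\{h>0\}} h(t)\,dt$.

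For the converse, given $\rho \in \Ha_V$ together with $m = h\dot\gamma\,dt\otimes\delta_\gamma$ and $\mu = \dot h\,dt\otimes\delta_\gamma$, I would first check $(\rho,m,\mu) \in \M_V$: indeed $\|\mu\|_\M \leq \|\dot h\|_{L^1(0,1)} < \infty$ and $\|m\|_\M \leq \|h\|_\infty^{1/2} \|\sqrt h\, \dot\gamma\|_{L^2(0,1)}$, where $\sqrt h\, \dot\gamma = (\sqrt h\gamma)^{\cdot} - (\sqrt h)^{\cdot}\gamma$ belongs to $L^2$ thanks to the regularity in $\Ha_V$ and the boundedness of $V$. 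To verify \eqref{cont weak} it suffices, by density, to test against product functions $\varphi(t,x) = a(t)b(x)$ with $a \in C^\infty_c(0,1)$ and $b \in C^1(V)$: Lemma \ref{lem:hgammasolutioncontinuity} makes $F(t) = a(t)h(t)b(\gamma(t))$ absolutely continuous with the expected product-rule derivative, and since $F(0)=F(1)=0$ the identity $\int_0^1 \dot F\,dt = 0$ rearranges precisely to \eqref{cont weak}. The energy identity \eqref{eq:J_char} is then a direct substitution using the same formulas as above.

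The main obstacle I expect is rigorously promoting the pointwise chain-rule identities on $\{h>0\}$ to global $\AC^2$ regularity in the presence of zeros of $h$: one must combine the explicit derivative formulas for $(\sqrt h)^{\cdot}$ and $(\sqrt h\gamma)^{\cdot}$ on $\{h>0\}$ with the Evans--Gariepy fact that the weak derivative of a nonnegative AC function vanishes a.e.\ on its zero set, so that the convention $0/0 = 0$ produces consistent a.e.\ derivatives on $[0,1]$ and the integrands in \eqref{eq:J_char} remain well defined.
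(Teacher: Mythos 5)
Your treatment of (i), (ii), the energy identity and the converse is sound and runs essentially parallel to the paper's proof: (i) is Lemma \ref{lem:prop B}(i), the two ODEs come from testing with $a(t)$ and $a(t)x_i$ (your derivation of $\dot\gamma=v(\cdot,\gamma)$ by dividing the absolutely continuous function $h\gamma$ by $h$ on compact subsets of $\{h>0\}$ is a slightly more direct version of the paper's second test-function argument), and the converse via Lemma \ref{lem:hgammasolutioncontinuity} and product test functions is exactly the paper's route.

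The gap is in (iii), and it is precisely the one point of the proposition where real work is required. As written, your promotion of $\sqrt h$ and $\sqrt h\gamma$ to $\AC^2[0,1]$ is circular: \cite[Theorem 4.4]{evansgariepy} (the a.e.\ vanishing of the derivative on a level set) applies to functions that are already known to be absolutely continuous or Sobolev, so invoking it ``for the nonnegative AC functions $h$ and $\sqrt h$'' presupposes for $\sqrt h$ exactly what has to be proved, and the same objection applies to $\sqrt h\gamma$. This is not a cosmetic issue: $h\in\AC^2[0,1]$ with $h\ge 0$ does \emph{not} by itself imply $\sqrt h\in\AC[0,1]$ --- take $h(t)=t^2\sin^2(1/t)$, which is Lipschitz and nonnegative while $\sqrt h=t\,|\sin(1/t)|$ has unbounded variation --- so the finiteness of $\int_{\{h>0\}}\dot h^2/h\,dt$ must be used in the local-to-global step itself, not merely to bound the candidate derivative in $L^2$. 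What is missing is a gluing argument of the following type: a continuous function on $[0,1]$ which vanishes on the closed set $\{h=0\}$ and is locally absolutely continuous on $\{h>0\}$ with derivative in $L^2(\{h>0\})$ belongs to $\AC^2[0,1]$ (for $s<t$, split $[s,t]$ at the first and last zero of $h$ it contains, where the function vanishes, to get $|f(t)-f(s)|\le\int_s^t |f'|\,\rchi_{\{h>0\}}\,d\tau$). The paper circumvents the gluing differently, by writing $\sqrt h\gamma$ (resp.\ $\sqrt h$) as the limit of $h\gamma/\sqrt{h+\e}$, integrating by parts against $\dot\f$ as in \eqref{eq:prel1}, and passing to the limit $\e\to 0$ by dominated convergence using \eqref{eq:J_char}. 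Either device works, but one of them must actually be carried out; your sketch, including the closing paragraph that restates the same Evans--Gariepy combination, asserts the conclusion rather than proving it.
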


\begin{proof}
Assume $\rho_t = h(t)\delta_{\gamma(t)}  \in \car_V$, $(m,\mu)  \in \mathcal{M}(X_V;\R^d) \times \mathcal{M}(X_V)$ and $J_{\alpha,\beta,\delta}(\rho,m,\mu) < \infty$. In particular, by definition of $J_{\alpha,\beta,\delta}$, we have that $(\rho,m,\mu)$ solves \eqref{cont weak}.  
By Lemma \ref{lem:prop B} we deduce $(i)$. We now show that the second ODE in $(ii)$ holds. By Lemma~\ref{lem:prop cont} we have $h \in BV(0,1)$, with distributional derivative given by $\pi_\#(g\rho)$, where $\pi \colon  X_V \to (0,1)$ is the projection on the time coordinate. Thus, for all $\f \in C_c^\infty(0,1)$,  
\[
\pi_\#(g\rho)(\f) = \int_0^1 \int_{V} \f(t)g(t,x) \, d\rho_t(x) \, dt  =  \int_0^1 \f(t)g(t,\gamma(t)) h(t) \, dt \,.
\]
Since $J_{\alpha,\beta,\delta}(\rho,m,\mu)<\infty$, by \eqref{formula B} and continuity of $h$, we conclude that $\dot{h}(t) = g(t,\gamma(t))h(t)$ almost everywhere and $h \in \AC^2[0,1]$. 
We will now show that the first ODE in $(ii)$ holds.
By testing \eqref{cont weak} against $\f(t,x):=a(t)b(x)$ with $a \in C_c^1(0,1)$, $b \in C^1 (V)$, we obtain  
\[
\frac{d}{dt} \int_{V} b(x) \, d\rho_t(x) = \int_{V} \left( \nabla b(x) \cdot v(t,x)  + b(x) g(t,x) \right) \, d\rho_t(x) \,, \,\, \text{ for a.e. } \,\, t \in (0,1)\,,
\]
since the right-hand side belongs to $L^2(0,1)$, thanks to Jensen's inequality, \eqref{formula B} and the assumption $J_{\alpha,\beta,\delta}(\rho,m,\mu)<\infty$. In particular, choosing $b$ as the coordinate functions, we deduce that $h\gamma  \in \AC^2([0,1];\R^d)$ with $(h\gamma)'(t)=h(t)[ v(t,\gamma(t)) + \gamma(t) g(t,\gamma(t))]$.  In particular $\gamma \in \AC^2(\{h \geq c\};\R^d)$ for every $c>0$, given that $V$ is bounded. Consider now the test function $\f \in C^1_c ((\{h>0\} \cap(0,1) ) \times V)$. Using that  $\dot{h}(t) = g(t,\gamma(t))h(t)$ almost everywhere, it is easy to check that the equation $\de_t \rho_t + \div (v \rho_t) =g \rho_t$ can be rewritten as%
\begin{equation} \label{eq:prel2}
 \int_0^1 \frac{d}{dt} \left( h(t) \f (t, \gamma(t)) \right) \, dt + \int_0^1  \, \nabla \f (t,\gamma(t)) \cdot ( v(t, \gamma(t)) - \dot{\gamma}(t) ) \, h(t) \, dt =0\,.
\end{equation}
Notice that the first integral in \eqref{eq:prel2} vanishes, as $\f$ is compactly supported. Set $\f(t,x):=a(t) x_i$ with $a \in C^1_c(\{h>0\} \cap (0,1))$ and $x_i$ coordinate function. Testing \eqref{eq:prel2} against $\f$ yields $(ii)$. 
By $(ii)$, Lemma \ref{lem:prop B},  and the energy bound, we also see that \eqref{eq:J_char} holds. 
We are left to show $(iii)$. 
First we claim that $\sqrt{h}\gamma \in \AC^2([0,1];\R^d)$. Indeed, for $\e>0$ and $\f \in C_c^\infty(0,1)$, an integration by parts yields
\begin{equation} \label{eq:prel1}
\int_0^1 h(t)\gamma(t) \frac{1}{\sqrt{h(t) + \e}} \dot\f(t)\,dt 
 = - \int_{\{h >0\}} \left[\frac{(h(t)\gamma(t))'}{\sqrt{h(t) + \e}}  - \frac{ h(t)\gamma(t) \dot h(t)}{2(h(t) + \e)^{3/2}}\right] \, \f(t) \,dt\,,
\end{equation}
where we used that $h\gamma \in \AC^2([0,1];\R^d)$, $(h\gamma)'=0$~a.e. in $\{h=0\}$ (see, e.g., \cite[Theorem~4.4]{evansgariepy}) and $(h\gamma)'=\dot h \gamma + h \dot \gamma$~a.e. in $\{h>0\}$. 
By \eqref{eq:J_char}, continuity of $h$, boundedness of $V$, we can invoke dominated convergence and pass to the limit as $\e \to 0$ in \eqref{eq:prel1}, thus concluding that $\sqrt{h}\gamma \in \AC^2([0,1];\R^d)$ with derivative given by 
$2^{-1}\rchi_{\{h>0\}} \dot{h}\gamma /\sqrt{h} + \sqrt{h} \dot \gamma$.
A similar argument shows that $\sqrt{h} \in \AC^2[0,1]$ with derivative given by 
$2^{-1}\rchi_{\{h>0\}}\dot{h}/\sqrt{h}$, concluding the proof of $(iii)$ and of the direct implication.

Conversely, assume that $\rho_t = h(t)\delta_{\gamma(t)}  \in \Ha_V$ and set $m:= h(t) \dot{\gamma}(t)\, dt \otimes \delta_{\gamma(t)}$, $\mu:=\dot h(t) \, dt \otimes \delta_{\gamma(t)}$. It is clear that $(\rho,m,\mu) \in \M_V$, as a consequence of the regularity on $h$ and $\gamma$. We claim that $(\rho,m,\mu)$ solves \eqref{cont weak} in $X_V$. Fix $b \in C^1(V)$. By Lemma \ref{lem:hgammasolutioncontinuity} we have that $h (b \circ\gamma) \in \AC^2[0,1]$ and \eqref{eq:productruleeq} holds. 
Thus, for all $a \in C^1_c(0,1)$, 
\begin{align*}
\int_{X_V} a' (t)  b(x)\, d\rho_t(x) \, dt  
=  -\int_{X_V} a(t)b(x) \, d\mu - \int_{X_V} a(t) \nabla b(x) \cdot  \,dm \,.
\end{align*}
Therefore, by employing a standard density argument, $(\rho,m,\mu)$ solves \eqref{cont weak} in $X_V$. Finally, by the regularity of $h$, $\gamma$ and \eqref{formula B}, we conclude that $J_{\alpha,\beta,\delta}(\rho,m,\mu)<\infty$ and \eqref{eq:J_char} holds.  
\end{proof}

\begin{proposition} \label{lem:compact_sublevels}
Let  $\mathscr{F} \colon (\car_V,\D) \to [0,\infty]$ be the functional defined at \eqref{inf_F}.  The domain of $\mathscr{F}$ is given by $\Ha_V$, where we have%
\begin{equation} \label{eq:repr_F}
\mathscr{F}(\rho) = \mathscr{F}(\gamma,h) = \int_{\{h>0\}}\frac{\beta}{2} \, |\dot{\gamma}(t)|^2 h(t) + \frac{\beta \delta^2}{2} \, \frac{|\dot{h}(t)|^2}{h(t)} +\alpha h(t) \, dt \,.
\end{equation}
Moreover $\mathscr{F}$ is lower semi-continuous and its sublevel sets are compact.
\end{proposition}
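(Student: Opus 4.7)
My plan is to dispatch the domain identification and formula \eqref{eq:repr_F} immediately from Proposition~\ref{prop:sublevelJ}, and then focus on lower semi-continuity and compactness separately. If $\mathscr{F}(\rho)<\infty$, some admissible triple exists, so Proposition~\ref{prop:sublevelJ}(iii) forces $\rho\in\Ha_V$, while parts (i)--(ii) show that the optimal $(m,\mu)$ is rigidly prescribed by $m=h\dot\gamma\,dt\otimes\delta_{\gamma}$, $\mu=\dot h\,dt\otimes\delta_{\gamma}$, giving the value in \eqref{eq:J_char}. Conversely, for $\rho\in\Ha_V$ the second half of Proposition~\ref{prop:sublevelJ} produces the same triple with finite energy; thus the infimum in \eqref{inf_F} is attained and equals \eqref{eq:repr_F}.

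For lower semi-continuity, I would suppose $\rho^n\to\rho$ in $\D$. The estimate $|h^n(t)-h(t)|\leq\D_F(\rho^n_t,\rho_t)$ from \eqref{eq:char_flat} gives uniform convergence $h^n\to h$, hence $\|\rho^n\|_{\M}\to\|\rho\|_{\M}$. Combined with the bound $|\int_V\varphi\,d(\rho^n_t-\rho_t)|\leq\max(\|\varphi\|_{\infty},\Lip\varphi)\,\D_F(\rho^n_t,\rho_t)$ and a density argument, this yields narrow convergence $\rho^n\weakstar\rho$ in $\M(X_V)$. Assuming $\liminf_n\mathscr{F}(\rho^n)<\infty$ and letting $(m^n,\mu^n)$ realize $\mathscr{F}(\rho^n)$, Cauchy--Schwarz gives $|m^n|(X_V)^2\leq 2B_{\delta}(\rho^n,m^n,\mu^n)\|\rho^n\|_{\M}$, and similarly for $\mu^n$, so both are uniformly bounded in total variation. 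Passing to a subsequence with $m^n\weakstar m'$, $\mu^n\weakstar\mu'$, the continuity equation transfers to $(\rho,m',\mu')$, and the joint weak$^*$ lower semi-continuity of $B_{\delta}$ from Lemma~\ref{lem:prop B} yields $\mathscr{F}(\rho)\leq J_{\alpha,\beta,\delta}(\rho,m',\mu')\leq\liminf_n\mathscr{F}(\rho^n)$.

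For compactness, I would consider $\{\rho^n\}$ with $\mathscr{F}(\rho^n)\leq M$. The coercive term forces $\int h^n\,dt\leq M/\alpha$, while $(\sqrt{h^n})'=\rchi_{\{h^n>0\}}\dot h^n/(2\sqrt{h^n})$ combined with the Fisher contribution bounds $(\sqrt{h^n})'$ uniformly in $L^2(0,1)$. A mean-value argument then yields uniform $L^\infty$ boundedness and $C^{1/2}$-equicontinuity of $\sqrt{h^n}$. Using the product rule $(\sqrt{h^n}\gamma^n)'=\tfrac{1}{2}\rchi_{\{h^n>0\}}\dot h^n\gamma^n/\sqrt{h^n}+\sqrt{h^n}\dot\gamma^n$ together with the kinetic energy bound and $\|\gamma^n\|_{\infty}\leq\diam V$, an analogous analysis shows $\sqrt{h^n}\gamma^n$ is uniformly bounded in $\AC^2([0,1];\R^d)$. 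Arzelà--Ascoli extracts uniform limits $\sqrt{h^n}\to\sqrt h$ and $\sqrt{h^n}\gamma^n\to\eta$ along a subsequence. Defining $\gamma(t):=\eta(t)/\sqrt{h(t)}$ on $\{h>0\}$ (and arbitrarily elsewhere), the candidate $\rho_t:=h(t)\delta_{\gamma(t)}$ belongs to $\car_V$ by Lemma~\ref{lem:narrow_cone}.

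The step I expect to be the main obstacle is upgrading pointwise to uniform-in-$t$ convergence in $\D_F$, since $\gamma^n$ need not be equicontinuous near $\{h=0\}$. I plan to sidestep this through the universal upper bound
\[
\D_F(\rho^n_t,\rho_t)\leq |h^n(t)-h(t)|+h(t)\,|\gamma^n(t)-\gamma(t)|,
\]
which follows from \eqref{eq:char_flat} in both regimes $|\gamma^n-\gamma|\leq 2$ and $|\gamma^n-\gamma|>2$. Rewriting the second term as $\sqrt{h(t)}\,|\sqrt{h(t)}\gamma^n(t)-\sqrt{h(t)}\gamma(t)|$ and splitting $\sqrt h\,\gamma^n-\sqrt h\,\gamma=(\sqrt h-\sqrt{h^n})\gamma^n+(\sqrt{h^n}\gamma^n-\eta)$, both pieces vanish uniformly in $t$ by the Arzelà--Ascoli step and boundedness of $\gamma^n$ in $V$. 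Together with uniform boundedness of $\sqrt h$, this gives $\D(\rho^n,\rho)\to 0$, which combined with the lower semi-continuity proved above yields compactness of every sublevel set.
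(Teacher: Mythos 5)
Your identification of the domain and the formula \eqref{eq:repr_F}, and your lower semi-continuity argument, are essentially the paper's own proof: the paper also reduces everything to Proposition \ref{prop:sublevelJ}, obtains attainment of the infimum in \eqref{inf_F} via the coercivity estimate \eqref{lem:prop J est} and the direct method, passes from $\D$-convergence to weak* convergence of $dt\otimes\rho_t^n$ by a density argument, extracts weak* limits of the optimal $(m^n,\mu^n)$ from uniform total-variation bounds, and concludes by weak* lower semi-continuity of the energy. Your observation that the direct part of Proposition \ref{prop:sublevelJ} pins down the value of $J_{\alpha,\beta,\delta}$ for \emph{any} finite-energy pair $(m,\mu)$ is a mild streamlining of the attainment step, but the content is the same; your Cauchy--Schwarz bound on $|m^n|,|\mu^n|$ replaces the paper's appeal to \eqref{lem:prop J est}, again equivalently.

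Where you genuinely deviate is the compactness step. The paper works with the pair $(h_n, h_n\gamma_n)$, derives $1/2$-H\"older equicontinuity for both from the Fisher and kinetic terms via H\"older's inequality, applies Ascoli--Arzel\`a, and then upgrades pointwise to uniform convergence in $\D_F$ by an $\e$-splitting of $[0,1]$ into $\{h\geq\e\}$ and $\{h\leq\e\}$, with a separate case analysis to control $|\gamma_n-\gamma|$ on the first set. You instead work in the natural ``cone coordinates'' $(\sqrt{h_n},\sqrt{h_n}\gamma_n)$, whose $\AC^2$ bounds fall out of the energy directly (the Fisher term is, up to a constant, $\|(\sqrt{h_n})'\|_{L^2}^2$), and you replace the case analysis by the universal estimate $\D_F(h_1\delta_{\gamma_1},h_2\delta_{\gamma_2})\leq |h_1-h_2|+h_2|\gamma_1-\gamma_2|$, which indeed holds in both regimes of \eqref{eq:char_flat} (in the regime $|\gamma_1-\gamma_2|>2$ one has $|h_1-h_2|+2h_2\geq h_1+h_2$), and remains valid when $h_2(t)=0$ so that $\gamma$ is arbitrary there. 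Writing $h\,|\gamma^n-\gamma|=\sqrt h\,|\sqrt h\gamma^n-\eta|$ and splitting as you do then gives uniform flat convergence without ever discussing equicontinuity of $\gamma^n$ near $\{h=0\}$. Both routes are correct; the paper's stays closer to the quantities appearing in \eqref{eq:repr_F} and yields explicit H\"older moduli for $h$ and $h\gamma$ (reused nowhere else, though), while yours is shorter at the final $\D$-convergence step and makes transparent why the square-root variables are the right compactness coordinates for the Hellinger--Kantorovich energy. One small point of care in your version: the derivative identities $(\sqrt{h^n})'=\tfrac12\rchi_{\{h^n>0\}}\dot h^n/\sqrt{h^n}$ and the analogous one for $\sqrt{h^n}\gamma^n$ should be justified exactly as in the proof of Proposition \ref{prop:sublevelJ}(iii) (chain rule on $\{h^n>0\}$ plus vanishing of the derivative a.e.\ on $\{h^n=0\}$ for an $\AC$ function vanishing there), but this is available since sublevel elements lie in $\Ha_V$.
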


\begin{proof}
We start by showing that the domain of $\mathscr{F}$ is given by $\Ha_V$ and that \eqref{eq:repr_F} holds. Assume first that $\rho^* \in \car_V$ and $\mathscr{F}(\rho^*)<\infty$. We claim that exists a pair $(m^*,\mu^*) \in \M(X_V;\R^d) \times \M(X_V)$ such that
\begin{equation} \label{eq:direct_meth}
\mathscr{F}(\rho^*)=J_{\alpha,\beta,\delta}(\rho^*,m^*,\mu^*)\,.
\end{equation}
Indeed the functional $(m,\mu) \mapsto J_{\alpha,\beta,\delta}(\rho,m,\mu)$  is weak* lower semi-continuous by Lemma \ref{lem:prop J}. Invoking \eqref{lem:prop J est} and the direct method, we conclude that the infimum at \eqref{inf_F} is achieved, showing \eqref{eq:direct_meth}. %
 Hence we can apply the direct implication of Proposition \ref{prop:sublevelJ} to $(\rho^*,m^*,\mu^*)$ to obtain that $\rho^* \in \Ha_V$ and that \eqref{eq:repr_F} holds.  Conversely, assume that $\rho_t^*=h(t)\delta_{\gamma(t)} \in \Ha_V$ and set $m:=\dot{\gamma} \rho^*$, $\mu:=(\dot{h}/h) \rho^*$. By the converse implication of Proposition \ref{prop:sublevelJ} we know that $(\rho^*,m,\mu) \in \M_V$ and $J_{\alpha,\beta,\delta}(\rho^*,m,\mu)<\infty$, from which we infer $\mathscr{F}(\rho^*)<\infty$. Thus there exists a pair $(m^*,\mu^*) \in \M(X_V;\R^d) \times \M(X_V)$ such that \eqref{eq:direct_meth} holds. An application of the direct implication of Proposition \ref{prop:sublevelJ} to $(\rho^*,m^*,\mu^*)$ yields \eqref{eq:repr_F}.

We now prove that $\mathscr{F}$ is lower semi-continuous with respect to $\D$. To this end, assume that $\D(\rho^n,\rho) \to 0$ as $n \to \infty$. We claim that $dt \otimes \rho_t^n \weakstar dt \otimes \rho_t$ weakly* in $\M(X_V)$. By density, it is sufficient to prove convergence for test functions $\f(t,x) = a(t)b(x)$ with $a \in C_c(0,1), b \in C(V)$. Moreover, it is not restrictive to assume that $\nor{b}_\infty \leq 1$. For a fixed $\e>0$ there exists $c \in C^1(V)$ such that $\nor{c}_\infty \leq 1$ and $\nor{b-c}_{\infty} < \e$. For $t \in [0,1]$ we have
\[
\begin{aligned}
\left| \int_V b(x) \,d(\rho_t^n-\rho_t) \right| & \leq \nor{b-c}_\infty (\nor{\rho^n_t}_{\M(V)} + \nor{\rho_t}_{\M(V)}) + 
  \left| \int_V c(x) \,d(\rho_t^n-\rho_t) \right|\\
 & \leq \e (\D(\rho^n,0) + \D(\rho,0)) + \Lip(c) \, \D(\rho^n,\rho) \leq \e C +  \Lip(c) \, \D(\rho^n,\rho) \,
\end{aligned} 
\]
where the first term in the first line was estimated by \eqref{eq:char_flat}, and the second one by \eqref{eq:flat_dual}. Since the estimate does not depend on $t$, and $\e$ is arbitrary, we conclude that $dt \otimes \rho_t^n \weakstar dt \otimes \rho_t$. We now claim that $\mathscr{F}$ is weak* lower semi-continuous in $\car_V$ considered as a subset of $\M(X_V)$: Indeed assume that $\rho^n \weakstar \rho$ in $\M(X_V)$. Without loss of generality we can assume that $\sup_n \mathscr{F}(\rho_n)<\infty$ along a subsequence, so that there exist $(m^n,\mu^n) \in \mathcal{M}(X_V;\R^d) \times \mathcal{M}(X_V)$ such that, up to subsequences, $\mathscr{F}(\rho^n)=J_{\alpha,\beta,\delta}(\rho^n,m^n,\mu^n)$. 
 By \eqref{lem:prop J est} we infer the existence of a pair $(m,\mu)$ such that, up to subsequences, $m^n \weakstar m$, $\mu^n \weakstar \mu$.
 We can now invoke weak* lower semi-continuity of $J_{\alpha,\beta,\delta}$ (Lemma \ref{lem:prop J}) to conclude weak* lower semi-continuity of $\mathscr{F}$.  
Since $dt \otimes \rho_t^n \weakstar dt \otimes \rho_t$ in $\M(X_V)$ whenever $\D(\rho^n,\rho)\to 0$, we infer lower semi-continuity of $\mathscr{F}$ with respect to $\D$.

Finally, we show that the sublevel sets of $\mathscr{F}$ are compact with respect to $\D$. As $\mathscr{F} \geq 0$ and is positively one-homogeneous, it is enough to show that $S_\mathscr{F} :=\{\rho \in \car_V:  \mathscr{F}(\rho) \leq 1\}$ is compact. 
Let $\rho_t = h(t) \delta_{\gamma(t)} \in S_{\mathscr{F}}$, so that, in particular, $\rho \in \Ha_V$. In order to show compactness of $S_{\mathscr{F}}$ we first provide some preliminary estimates for the maps $h$ and $h\gamma$. By \eqref{eq:repr_F} we immediately infer that $\|h\|_{1} \leq 1/\alpha$.  Let $0\leq t_1 \leq t_2 \leq 1$. There holds
\begin{equation}
\begin{aligned}
h(t_2) - h(t_1) & \leq \int_{t_1}^{t_2} |\dot{h}(s)| ds 
 = \int_{(t_1,t_2) \cap \{h>0\}} |\dot{h}(s)| ds 
 = \int_{(t_1,t_2) \cap\{h>0\}} \frac{|\dot{h}(s)|}{\sqrt{h(s)}}\sqrt{h(s)} ds \\
&  \leq \left(\int_{\{h>0\}} \frac{|\dot{h}(s)|^2}{h(s)} ds\right)^{1/2} \left(\int_{t_1}^{t_2}  h(s) ds \right)^{1/2} 
 \leq \frac{2}{\beta \delta^2}\, \left(\int_{t_1}^{t_2}  h(s) ds \right)^{1/2}  \,,
 \label{eq:holderinsublevel}
\end{aligned}
\end{equation}
where we used that $\dot{h} = 0$ almost everywhere in $\{h=0\}$ (\cite[Theorem 4.4]{evansgariepy}), H\"older's inequality, and the fact that $\mathscr{F}(\rho) \leq 1$ in conjunction with \eqref{eq:repr_F}. Since $h \geq 0$, choosing $t_1 \in \argmin h$ in the above estimate yields 
\begin{equation} \label{eq:infinity_bound}
\nor{h}_{\infty} \leq \frac{2}{\beta \delta^2} \, \nor{h}_{1}^{1/2} + \nor{h}_{1} \leq C \,, \qquad \norm{h \gamma}_{\infty} \leq C R\,, 
\end{equation}
where $R:=\max\{ |p| \, \colon \, p \in V\}$, $C:=2/(\beta\delta^2 \sqrt{\alpha})+ 1/\alpha$. Recall that $R <\infty$ as $V$ is bounded. 
Thus, by \eqref{eq:holderinsublevel} and \eqref{eq:infinity_bound}, 
\begin{equation} \label{eq:holderinsublevel2}
|h(t_2) - h(t_1)| \leq \left(\int_{t_1}^{t_2}  h(s) ds \right)^{1/2} \leq C \, |t_1-t_2|^{1/2} \quad \text{ for all } \quad 0\leq t_1 \leq t_2 \leq 1\,.
\end{equation}
Moreover, by \eqref{eq:holderinsublevel}-\eqref{eq:infinity_bound} we can estimate
\[
\int_{t_1}^{t_2} |\dot{h}(s)\gamma(s)|ds   \leq R\int_{t_1}^{t_2} |\dot{h}(s)|ds 
\leq R \left(\int_{t_1}^{t_2}  h(s) ds \right)^{1/2} 
\leq CR |t_1 - t_2|^{1/2} \,.
\]
and also
\begin{align*}
\int_{t_1}^{t_2} |h(s)\dot{\gamma}(s)|ds  
& \leq  \left(\int_{t_1}^{t_2} h(s)ds \right)^{1/2}  \left(\int_{0}^{1} |\dot{\gamma}(s)|^2h(s)ds\right)^{1/2} 
 \leq \frac{2C}{\beta} \, |t_1 - t_2|^{1/2}\,,
\end{align*}
where we used H\"older's inequality, \eqref{eq:repr_F},  \eqref{eq:holderinsublevel2}, and $\mathscr{F}(\rho) \leq 1$. By Lemma \ref{lem:hgammasolutioncontinuity} and the above estimates we thus infer
\begin{equation} \label{eq:holderinsublevel3}
|h(t_1)\gamma(t_1) - h(t_2)\gamma(t_2)|  %
 \leq \int_{t_1}^{t_2} |\dot{h}(s)\gamma(s)|ds +  \int_{t_1}^{t_2} |h(s)\dot{\gamma}(s)|ds \leq C(R+2\beta^{-1}) |t_1-t_2|^{1/2}
\end{equation}
for every $0\leq t_1 \leq t_2 \leq 1$. Hence, considering a sequence $\{\rho^n\}_n$ in $S_{\mathscr{F}}$ with $\rho_t^n= h_n(t) \delta_{\gamma_n(t)}$, by \eqref{eq:infinity_bound}- \eqref{eq:holderinsublevel3} we have that $h_n$ and $h_n \gamma_n$ are equibounded and equicontinuous. Therefore Ascoli-Arzel\`a's theorem implies that, up to subsequences, $h_n \rightarrow h$ and $\gamma_n h_n \rightarrow f$ uniformly, where $h \in C[0,1]$, $h \geq 0$ and $f \in C([0,1];\R^d)$.  Define $\gamma(t) := f(t)/h(t)$ if $h(t) > 0$. By the uniform convergence $h_n\rightarrow h$ we have that $\gamma(t)  \in V$ for  $t \in \{h>0\}$. %
Therefore, by setting
$\rho_t :=  h(t) \delta_{\gamma(t)}$, Lemma \ref{lem:narrow_cone} implies that $\rho \in \car_V$. Since $h_n \to h$ pointwise and $\gamma_n \to \gamma$ pointwise in $\{h>0\}$, and since $\nor{h_n}_{\infty} \leq C$, by dominated convergence one immediately concludes that $dt \otimes \rho^n_t \weakstar dt \otimes \rho_t$ in $\M(X_V)$. We can then invoke the weak* lower semi-continuity of $\mathscr{F}$ to conclude that $\rho \in S_{\mathscr{F}}$. We are left to prove that $\rho^n  \to \rho$ with respect to $\D$. 
Fix $\e>0$. By the uniform convergences $h_n \to h$ and $h_n\gamma_n \to h \gamma$, there exists $N(\e) \in \N$ such that 
\begin{equation} \label{eq:sublevels_eps}
|h_n(t) - h(t)| < \frac{\e}{R} \,, \quad 
|h_n(t)\gamma_n(t) - h(t) \gamma(t)|< \e  \,, \quad
\text{ for all } \,\, n \geq N(\e)\,, \,\, t \in [0,1] \,.
\end{equation}
Let $t \in \{h \geq \e\}$ and $n \geq N(\e)$. Using the above condition we infer
\[
|\gamma_n(t)-\gamma(t)| \leq 
\left|  \frac{h_n(t)}{h(t)} \gamma_n(t) - \gamma(t)   \right| + 
|\gamma_n(t)| \left|  \frac{h_n(t)}{h(t)} - 1   \right|
< \frac{\e}{h(t)} + R \, \frac{\e}{R h(t)} \leq 2 \,.
\]
Set $m_n(t):=\min(h_n(t),h(t))$. Then, by \eqref{eq:char_flat},
\[
\begin{aligned}
\D_F(\rho_t^n,\rho_t) & < \frac{\e}{R} + m_n(t) \, |\gamma_n(t)-\gamma(t)| \\
& \leq \frac{\e}{R} + |\gamma_n(t)| \, | m_n(t) - h_n(t)| + 
 |h_n(t)\gamma_n(t) - h(t) \gamma(t)| + 
 |\gamma(t)| \, |m_n(t) - h(t)| \\
& \leq \frac{\e}{R} + 2 R |h_n(t)-h(t)| +  |h_n(t)\gamma_n(t) - h(t) \gamma(t)| < (R^{-1} + 3) \e \,.
\end{aligned}
\]
Let now $t \in \{h \leq \e\}$. By triangle inequality and \eqref{eq:char_flat}, \eqref{eq:sublevels_eps}
\[
\D_F (\rho_t^{n},\rho_t)
  \leq  h_n(t) + h(t) \leq 
  |h_{n}(t)-h(t)| + 2h(t) 
   \leq  \e (R^{-1}+2) \,.
\]
In total we infer $\D(\rho^n,\rho) < C \e$ for $n \geq N(\e)$, concluding the proof.
\end{proof}

\section{The main decomposition theorem} \label{sec:main_thm}

In this section we will prove the decomposition result in Theorem \ref{thm:intro_main} anticipated in the introduction. Specifically, the proof is presented in Sections \ref{sec:proof_converse}, \ref{sec:proof_direct}, while Section \ref{sec:regularized} contains auxiliary results which are instrumental to the proof.

For reader's convenience we will recall a few notations and the statement of Theorem \ref{thm:intro_main}. Let $d \in \N$, $d\geq 1$ and $V \subset \R^d$ be the closure of a bounded domain of $\R^d$. We denote the time-space cylinder by $X_V :=(0,1) \times V$. We also recall the definitions of $\cone_V$ and $\car_V$ at \eqref{eq:cone}-\eqref{eq:narr}.
The set $\cone_V$ is equipped with the flat metric $\D_F$ defined at \eqref{eq:flat_dual}, while $\car_V$ is equipped with the supremum distance $\D$ defined at \eqref{distance_sup}. We remind the reader that $(\car_V,\D)$ is a complete metric space (Proposition \ref{prop:complete}). Moreover we will also consider the set $\Ha_V$ introduced at \eqref{eq:char_smooth}. 
Let $v\colon X_V \to \R^d$, $g \colon X_V \to \R$ be given measurable maps and consider the system of ODEs
\begin{align}
&\dot{\gamma}(t) = v(t,\gamma(t)) \,\, \text{ a.e. in } \, \{h>0\}\,,\label{ODE1} \tag{O1} \\
& \dot h(t) = g(t,\gamma(t)) h(t)\,\, \text{ a.e. in }\,  (0,1) \,. \label{ODE2}	\tag{O2}
\end{align}

For $v$ and $g$ as above, we define the following subset of $\Ha_V$:
\begin{equation}
	 \label{def:havg}
\Ha_V^{v,g} := \left\{ \rho_t = h(t)\delta_{\gamma(t)}  \in \Ha_V \, \colon \,   (h,\gamma) \, \text{ satisfy } \, \eqref{ODE1}-\eqref{ODE2} \right\} \,. 
\end{equation}
Also define the subset of $\car_V$ 
\begin{equation}
\mathcal{H}^1_V := \{(h,\gamma)\in \car_V : \|h\|_{1} = 1\}\,.  \label{eq:defh1}
\end{equation}
Finally, define the subset of $\M^+(\car_V)$:
\[
\M^+_1 (\car_V):= \left\{ \sigma \in \M^+(\car_V) \, \colon \,
\int_{\car_V} \|h\|_\infty \, d\sigma (\gamma,h) <\infty \right\}\,,
\]
where the notation $d\sigma(\gamma,h)$ is a shorthand for expressing that the integral is computed on all curves $\rho_t =h(t) \delta_{\gamma(t)} \in \car_V$.

\begin{definition} \label{def:building}
For a measure $\sigma \in \M^+(\car_V)$ we define the set function $\rho_t^\sigma$ as
\begin{equation} \label{superposition}
\rho^\sigma_t(E) := \int_{\car_V}  h(t)\,\rchi_E(\gamma(t))\,  d\sigma(\gamma,h) 
\end{equation}
for all Borel sets $E \subset V$  and $t \in [0,1]$. 
\end{definition}

\begin{remark} \label{rem:representationL1}
The map $(\gamma,h) \mapsto h(t)\rchi_E(\gamma(t))$ at \eqref{superposition} is measurable in $(\car_V,\D)$ by Lemma \ref{lem:evaluationsecond}; therefore the integral is well defined, possibly unbounded. Assume in addition that $\sigma \in \mathcal{M}_1^+(\car_V)$. It is easy to check that $\rho_t^\sigma$ at \eqref{superposition} belongs to $\M^+(V)$ for all $t \in [0,1]$. Moreover, if  $\f \in L^1_{\rho^\sigma_t}(V)$ for some fixed $t \in [0,1]$, then the map $(\gamma,h) \mapsto h(t)\f(\gamma(t))$ belongs to $L^1_\sigma(\car_V)$ and
\begin{equation}\label{eq:representationL1}
\int_V \f(x) \, d \rho_t^\sigma(x) = \int_{\car_V}  h(t)\,\f(\gamma(t))\,  d\sigma(\gamma,h) \,.
\end{equation}
This fact can be shown by mimicking the proof of \cite[Theorem 3.6.1]{bogachev}, in conjunction with Lemma~\ref{lem:evaluationsecond}. 
Similarly, if $\f \colon V \to \R \cup \{\pm \infty\}$ is measurable and the map $(\gamma,h) \mapsto h(t)\f(\gamma(t))$ belongs to $L^1_{\sigma}(\car_V)$, then $\f \in L^1_{\rho_t^\sigma}(V)$ and \eqref{eq:representationL1} holds.
\end{remark}

We are now ready to state the main decomposition result of the paper.

\begin{theorem} \label{thm:lifting}
Assume that $\Om \subset \R^d$ is the closure of a bounded domain, with $d \in \N$, $d \geq 1$. 
Let $\rho_t \in \pcurves$ be a measure solution of the continuity equation $\de_t \rho_t + \div (v \rho_t) = g \rho_t$ in $X_\Om$
 in the sense of \eqref{cont weak}, for some measurable maps $v \colon X_\Om \to \R^d$, $g \colon X_\Om \to \R$ satisfying
\begin{equation}\label{thm:lifting:1}
\int_0^1 \int_{\Om} |v(t,x)|^2 + |g(t,x)|^2 \, d\rho_t (x) \, dt< \infty \,.
\end{equation}
Then there exists a measure $\sigma \in \M_1^+(\car_\Om)$ concentrated on $\Ha_\Om^{v,g} \cap \Ha_\Om^1$ and such that $\rho_t = \rho_t^\sigma$ for all $t \in [0,1]$, where $\rho_t^\sigma$ is defined at \eqref{superposition}, that is,
\begin{equation} \label{eq:representation}
\int_\Om \f(x) \, d \rho_t(x) =\int_{\car_\Om} h(t)\f(\gamma(t))\, d\sigma(\gamma,h)  \,\,\, \text{ for all } \,\, \, \f \in C(\Om)\,.
\end{equation}
Conversely, assume that $\sigma \in \M^+(\car_\Om)$ is concentrated on $\Ha_\Om^{v,g}$ and satisfies the bound
\begin{equation} \label{eq:conv_bound}
\int_0^1 \int_{\car_\Om} h(t) \left(1+ | v(t,\gamma(t))| + |g(t,\gamma(t))|  \right) \, d\sigma(\gamma,h)\, dt < \infty\,.
\end{equation}
Then $\sigma$ belongs to $\M^+_1(\car_\Om)$ and $\rho_t^\sigma$ defined by \eqref{superposition} belongs to $\pcurves$ and satisfies $\de_t \rho_t^\sigma + \div (v \rho^\sigma_t) = g \rho^\sigma_t$ in $X_\Om$.
\end{theorem}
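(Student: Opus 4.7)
This is the easier part. For $\sigma$-a.e.~$(\gamma,h)$ with $\rho_t=h(t)\delta_{\gamma(t)}\in\Ha_\Om^{v,g}$, the converse implication of Proposition \ref{prop:sublevelJ} applied with $m:=h(t)\dot{\gamma}(t)\,dt\otimes\delta_{\gamma(t)}$ and $\mu:=\dot{h}(t)\,dt\otimes\delta_{\gamma(t)}$ produces a solution of \eqref{cont weak}. The ODEs \ref{ODE1}--\ref{ODE2} rewrite these as $m=v(t,\gamma(t))h(t)\,dt\otimes\delta_{\gamma(t)}$ and $\mu=g(t,\gamma(t))h(t)\,dt\otimes\delta_{\gamma(t)}$, so integrating the weak identity against $d\sigma(\gamma,h)$ and applying Fubini (justified by \eqref{eq:conv_bound} and the measurability in Lemma \ref{lem:evaluationsecond}) yields $\de_t\rho^\sigma+\div(v\rho^\sigma)=g\rho^\sigma$ distributionally. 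The bound \eqref{eq:conv_bound} also shows $\sigma\in\M_1^+(\car_\Om)$, and Lemma \ref{lem:prop cont} upgrades $\rho^\sigma$ to a narrowly continuous curve.

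\textbf{Direct direction: regularization and construction of $\sigma_\e$.} For the direct part, I would follow Ambrosio's smoothing strategy \cite{ambrosioinventiones} adapted to the inhomogeneous case. Extending $\rho$, $v\rho$, $g\rho$ by zero outside $\Om$ and convolving in space with a smooth mollifier $\eta_\e$, set $\rho^\e:=\rho*\eta_\e$, $v^\e:=((v\rho)*\eta_\e)/\rho^\e$, $g^\e:=((g\rho)*\eta_\e)/\rho^\e$. The regularized triple still solves the inhomogeneous continuity equation, now with smooth coefficients satisfying \eqref{eq:comp1}--\eqref{eq:comp2} on any ball $V\supset\Om$. Proposition \ref{prop:solutionbypush} represents $\rho^\e_t$ as the push-forward of $\rho_0^\e$ under the global flow $X_x^\e$ of $v^\e$, with weight $h_x^\e(t):=\exp\int_0^t g^\e(s,X_x^\e(s))\,ds$. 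This naturally defines $\sigma_\e\in\M^+_1(\car_V)$ as the push-forward of $\rho_0^\e$ under $x\mapsto(t\mapsto h_x^\e(t)\delta_{X_x^\e(t)})$, giving $\rho_t^\e=\rho_t^{\sigma_\e}$.

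\textbf{Compactness and weak limit.} Jensen's inequality applied to the one-homogeneous convex integrand $\Psi_\delta$ gives $B_\delta(\rho^\e,v^\e\rho^\e,g^\e\rho^\e)\leq B_\delta(\rho,v\rho,g\rho)$, bounded by \eqref{thm:lifting:1}. Together with $\|\rho^\e\|_\M\leq\|\rho\|_\M$, this provides a uniform bound $\int_{\car_V}\mathscr{F}\,d\sigma_\e\leq C$. Since sublevels of $\mathscr{F}$ are $\D$-compact in $\car_V$ by Proposition \ref{lem:compact_sublevels}, a Prokhorov-type tightness argument produces a subsequential narrow limit $\sigma_\e\weakstar\sigma$ in $\M^+(\car_V)$. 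The representation \eqref{eq:representation} passes to the limit because, for $\varphi\in C(\Om)$, the map $(\gamma,h)\mapsto h(t)\varphi(\gamma(t))$ is continuous on $(\car_V,\D)$ by Lemma \ref{lem:evaluationsecond}, while narrow convergence $\rho_t^\e\to\rho_t$ on the left follows from the mollification properties.

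\textbf{Main obstacle: concentration on $\Ha_\Om^{v,g}$.} The hardest step is to verify that $\sigma$-a.e.~curve solves \ref{ODE1}--\ref{ODE2} with the \emph{original} coefficients $v,g$, not merely some pair. To handle this I would introduce the non-negative deficit
\begin{equation*}
\Phi(\gamma,h) := \int_{\{h>0\}}\!|\dot\gamma - v(t,\gamma)|^2\, h\,dt + \int_{\{h>0\}}\!\delta^2\,\frac{|\dot h - g(t,\gamma)h|^2}{h}\,dt,
\end{equation*}
and bound $\int\Phi\,d\sigma_\e$ by $C\int_{X_V}(|v-v^\e|^2+\delta^2|g-g^\e|^2)\,d\rho^\e$ by expanding squares and using that $\sigma_\e$-a.e.~curve solves the regularized system. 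Standard mollification/commutator estimates based on the integrability \eqref{thm:lifting:1} show that this error vanishes as $\e\to 0$. Lower semi-continuity of $\Phi$ under $\D$-convergence---a Reshetnyak-type statement exploiting the one-homogeneity and convexity of $\Psi_\delta$ together with the compactness framework of Section \ref{sec:functional}---then yields $\int\Phi\,d\sigma=0$. Therefore $\sigma$-a.e.~curve lies in $\Ha_\Om^{v,g}$, and since $\supp\rho_t\subset\Om$ forces $\gamma(t)\in\Om$ along $\sigma$-a.e.~trajectory in the set $\{h>0\}$, $\sigma$ can be viewed as an element of $\M^+(\car_\Om)$, completing the proof.
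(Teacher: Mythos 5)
Your converse argument and the overall smoothing skeleton for the direct part match the paper, but the direct direction as you set it up has two genuine gaps. First, the regularization: you mollify and take $v^\e=((v\rho)*\xi_\e)/(\rho*\xi_\e)$, $g^\e=((g\rho)*\xi_\e)/(\rho*\xi_\e)$ and claim these satisfy \eqref{eq:comp1}--\eqref{eq:comp2}. That is false in general: with $v,g$ merely in $L^2(\rho)$ there is no lower bound on $\rho_t*\xi_\e$, so $\sup_x|g^\e(t,x)|$ and the Lipschitz constants are uncontrolled, Proposition \ref{prop:solutionbypush} does not apply, and the push-forward representation from $\rho_0*\xi_\e$ breaks down exactly in the regime the theorem is meant to cover: if $\rho_0=0$ but mass is generated later (possible since $g$ is unbounded, e.g.\ $h(t)=t^2$, $g=2/t$), your $\sigma_\e$ is the zero measure and cannot represent $\rho^\e_t$. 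The paper's construction \eqref{eq:regularizations} adds $\eta_\e=\e\rchi_V$ precisely to force $\rho^\e_t\geq\e$ on $V$, which yields \eqref{eq:boundlipv}--\eqref{eq:boundlipg}, makes the flow/exponential-weight representation legitimate, and injects artificial mass so that vanishing/generating solutions are still seen by $\sigma^\e$ (with the weight normalization in \eqref{eq:defofmeasure}). A related, smaller issue: passing to the limit in the representation for each fixed $t$ uses the unbounded continuous integrand $(\gamma,h)\mapsto h(t)\f(\gamma(t))$, so narrow convergence of $\sigma^\e$ alone is not enough; the paper first proves a time-integrated identity with a uniform-integrability argument and then upgrades it to all $t$ via the estimate $\int\nor{h}_\infty\,d\sigma<\infty$, which itself needs the $g$-energy bound.

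Second, the concentration step. Your deficit functional $\Phi$ is built from the \emph{original} measurable $v,g$, and both halves of your argument fail as stated: (i) the error bound requires $\int|v-v^\e|^2+\delta^2|g-g^\e|^2\,d\rho^\e\to 0$, which is not a standard mollification estimate for $v,g\in L^2(\rho)$ merely measurable (Jensen only gives $\int|v^\e|^2d\rho^\e\leq\int|v|^2d\rho$ and weak convergence of $v^\e\rho^\e$; strong closeness to $v$ along $\rho^\e$ essentially presupposes continuity of $v$); (ii) the ``Reshetnyak-type'' lower semicontinuity of $(\gamma,h)\mapsto\int|\dot\gamma-v(t,\gamma)|^2h\,dt$ with respect to $\D$-convergence is simply not available for measurable $v,g$, so $\int\Phi\,d\sigma\leq\liminf\int\Phi\,d\sigma_\e$ is unjustified. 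Both defects can only be repaired by inserting continuous approximations $\bar v,\bar g\in C_c$ (dense in $L^1_\rho$, resp.\ $L^2_\rho$) and controlling errors of the form $\int|\bar g-g|\,d\rho$, which is exactly what the paper does: it proves the linear, distribution-level estimates \eqref{eq:concentrationh}--\eqref{eq:concentrationv} (whose integrands are $\D$-continuous, so they survive the narrow limit without any Reshetnyak argument), and then removes the approximation and a countable family of test functions to get \ref{ODE1}--\ref{ODE2} $\sigma$-a.e. Until you replace your quadratic-deficit step by such an argument (or prove the missing lsc and strong $L^2(\rho^\e)$ convergence, which would need new ideas), the key conclusion that $\sigma$ is concentrated on $\Ha_\Om^{v,g}$ is not established.
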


\begin{remark} \label{rem:conv_bound}
Condition \eqref{eq:conv_bound} is natural in the following sense. If $\rho_t$ satisfies the assumptions of Theorem \ref{thm:lifting}, then in particular the map $\f(t,x):=1 + |v(t,x)|+|g(t,x)|$ belongs to $L^1_{\rho_t^\sigma}(\Om)$ for a.e.~$t \in (0,1)$, thanks to \eqref{thm:lifting:1},  \eqref{eq:representation} and narrow continuity of $\rho_t$. Therefore, by applying Remark~\ref{rem:representationL1}, we see that the measure $\sigma$ representing $\rho_t$ satisfies \eqref{eq:conv_bound}.
\end{remark}

\subsection{Proof of the converse implication of Theorem \ref{thm:lifting}}\label{sec:proof_converse}

We now prove the converse statement in Theorem \ref{thm:lifting}. To this end, assume that $\sigma \in \M^+(\car_\Om)$ is concentrated on $\Ha_\Om^{v,g}$ and \eqref{eq:conv_bound} holds.
Let us first show that $\sigma \in \M^+_1(\car_\Om)$. Let $\rho_t =h(t) \delta_{\gamma(t)} \in \car_\Om$ and $t^* \in \argmin h$, which exists by continuity of $h$ (see Lemma \ref{lem:narrow_cone}). 
Using the definition of $\Ha_\Om^{v,g}$ we can estimate  
\[
	h(t)  =  h(t^*) + \int_{t^*}^{t} \dot h (\tau) \, d\tau 
	\leq \int_0^1 h(\tau) \, d\tau + \int_{t^*}^{t} g(\tau,\gamma(\tau))h(\tau) \, d\tau \, \,\,\,\, \sigma\text{-a.e. in } \car_\Om\,,
\]
	for all $t \in [0,1]$. In particular,
	\begin{equation} \label{thm:lifting:3}
	\norm{h}_{\infty} \leq \int_0^1 h(t) (1 + |g(t,\gamma(t))|) \, dt \, \,\,\,\, \sigma\text{-a.e. in } \car_\Om\,,
	\end{equation}
	concluding that $\sigma \in \M^+_1(\car_\Om)$, thanks to \eqref{eq:conv_bound}. 
		We now show that the curve $t \mapsto \rho_t^\sigma$ defined by \eqref{superposition} belongs to $\pcurves$.
	First, Remark~\ref{rem:representationL1} implies that $\rho_t^\sigma \in \M^+(\Om)$ for all $t \in [0,1]$.  
For the narrow continuity, fix $\f \in C(\Om)$ and notice that by definition the map $t \mapsto h(t)\f(\gamma(t))$ is continuous for all $\rho_t =h(t) \delta_{\gamma(t)} \in \car_\Om$. Since $\sigma \in \M^+_1(\car_\Om)$ we can apply dominated convergence and conclude that also 
$t \mapsto \int_\Om \f(x)\, d\rho_t^\sigma(x)$ 
is continuous. 	We are left to show that  $\rho^\sigma$ solves the continuity equation $\de_t \rho^\sigma_t + \div (v \rho^\sigma_t) = g \rho^\sigma_t$ in $X_\Om$. To this end, fix $b \in C^1(\Om)$.
	By Lemma~\ref{lem:hgammasolutioncontinuity} the map $t\mapsto h(t)b(\gamma(t))$ is differentiable almost everywhere and \eqref{eq:productruleeq} holds. Therefore, for all $0 \leq s \leq t \leq 1$ the following holds
	\[
	\begin{aligned}
	 \int_{\Om} b \, d\rho^\sigma_t -  \int_{\Om} b  \, d\rho^\sigma_s & = \int_{\car_\Om} \int_s^t \frac{d}{d\tau} [h(\tau)b(\gamma(\tau))] \, d\tau \, d\sigma(\gamma,h)  \\         
	&  =\int_{\car_\Om} \int_s^t   \dot h(\tau) b(\gamma(\tau)) + h(\tau )\nabla b(\gamma(\tau)) \cdot \dot \gamma(\tau)   \, d\tau \, d\sigma(\gamma,h)\\
	& =  \int_s^t \int_{\car_\Om} h(\tau) \left[ b(\gamma(\tau)) g(\tau,\gamma(\tau))  + \nabla b(\gamma(\tau)) \cdot v(\tau,\gamma(\tau))    \right] \,  d\sigma(\gamma,h)\,  d\tau \,,
	\end{aligned}
	\]
where in the last equality we used that $\sigma$ is concentrated on $\Ha_\Om^{v,g}$ and  applied Fubini's Theorem, which we are allowed to do as the integrand is absolutely integrable by \eqref{eq:conv_bound}, triangle inequality, and the fact that $b \in C^1(\Om)$.  
In particular, the map $t \mapsto \int_{\Om} b(x) \, d\rho^\sigma_t(x)$ is absolutely continuous with almost everywhere derivative given by
\begin{equation} \label{thm:lifting:100}
\begin{aligned}
\frac{d}{dt}\int_{\Om } b(x)\, d\rho_t^\sigma(x) & = \int_{\car_\Om} h(t) \left[b(\gamma(t)) g(t,\gamma(t))  + \nabla b (\gamma(t)) \cdot v(t,\gamma(t)) \right] \, d\sigma(\gamma,h)  \\
& =  \int_{\Om} b(x) g(t,x)  + \nabla b(x) \cdot v(t,x) \, d\rho_t^\sigma(x)\,.
\end{aligned}
\end{equation}
The second equality in \eqref{thm:lifting:100} follows because $v$ and $g$ are measurable and hence $\Psi (t,x):= b(x)g(t,x) + \nabla b(x)\cdot v(t,x)$ is measurable in $\Om$ for a.e.~$t$ fixed. From \eqref{eq:conv_bound} we have that $(\gamma,h) \mapsto h(t)\Psi(t,\gamma(t))$ belongs to $L^1_\sigma(\car_\Om)$ for a.e.~$t$, and hence by Remark~\ref{rem:representationL1} we can apply \eqref{eq:representationL1} to $\Psi(t,\cdot)$ and obtain the second equality in \eqref{thm:lifting:100}.   Identity \eqref{thm:lifting:100} implies that 
$\rho^\sigma_t$ solves the continuity equation in $X_\Om$ in the sense of \eqref{cont weak}, for all $\f \in C^1_c(X_\Om)$ of the form $\f (t,x) = a(t)b(x)$ for $a \in C^1_c(0,1), b \in C^1(\Om)$, and hence, by density, for all the elements of $C^1_c(X_\Om)$.

\subsection{Regularized solutions of the continuity equation} \label{sec:regularized} 
Before starting the proof of the direct statement in Theorem \ref{thm:lifting}, we provide some smoothing arguments which will be employed to construct the measure $\sigma$. To this end, let $\Om \subset \R^d$, $d \in \N$, $d \geq 1$ be the closure of a bounded domain. Let $v: X_\Om \rightarrow \R^d$, $g:X_\Om \rightarrow \R$ be given measurable maps, and $\rho_t \in \pcurves$ be such that $\de_t \rho_t + \div (v \rho_t) = g \rho_t$ in $X_\Om$ in the sense of \eqref{cont weak}. %
We extend $v,g$ to zero to the space $(0,1) \times \R^d$. Similarly extend $\rho_t$ to zero so that $\rho_t \in \M^+(\R^d)$. Notice that the extensions $(\rho, v,  g)$ satisfy the continuity equation in $(0,1) \times \R^d$, due to the no-flux boundary conditions. 
For $x \in \R^d$, $r>0$ let $B_r(x):=\{x \in \R^d \, \colon \, |x|<r\}$ and let $\xi \in C^\infty(\R^d)$ be such that $\xi \geq 0$, $\supp \xi \subset B_1(0)$ and $\int_{\R^d} \xi \, dx=1$.
For every $0<\e < 1$ and $x \in \R^d$ set $\xi_\e (x):= \e^{-d} \xi (x \e^{-1})$. Note that $\supp \xi_\e \subset B_\e(0)$. 
Let $R>0$ be such that 
\begin{equation} \label{def_of_R}
\{x \in \R^d : \dist(x,\Omega) \leq  2\} \subset V\,\, ,
\,\,\,\, \,\, V:=\overline{B_R(0)} \,,
\end{equation}
and define 
\begin{equation}\label{eq:regularizations}
\rho_t^\e := (\rho_t \ast \xi_\e ) + \eta_\e \,  ,  \quad \eta_\e := \e \rchi_{V}\,, \quad v_t^\e := \frac{(v_t\rho_t)\ast \xi_\e}{\rho^\e_t}, \quad g_t^\e := \frac{(g_t\rho_t)\ast \xi_\e}{\rho_t^\e }\, , 
\end{equation}
where $v_t^\e$ and $g_t^\e$ are set to be zero in the region where $\rho_t^\e (x)=0$, i.e., in $(0,1) \times (\R^d \smallsetminus V)$. Here, with a little abuse of notation, we denote $v_t=v(t,\cdot)$, $v_t^\e=v^\e(t,\cdot)$, $g_t=g(t,\cdot)$, $g^\e_t=g^\e(t,\cdot)$. %

\begin{lemma}\label{lem:convolution}
Let $\rho_t \in \pcurves$ and $v: X_\Om \rightarrow \R^d$, $g:X_\Om \rightarrow \R^d$ be measurable. Suppose that $\de_t \rho_t + \div (v \rho_t) = g \rho_t$ in $X_\Om$ in the sense of \eqref{cont weak} and that \eqref{thm:lifting:1} holds.
Let $(\rho^\e_t, v^\e_t,g_t^\e)$ be defined as in \eqref{eq:regularizations}.
Then $(\rho^\e_t \,dx, v^\e_t,g_t^\e)$ is a solution to $\de_t \rho^\e_t \,dx + \div(v_t^\e \rho_t^\e \,dx) = g_t^\e \rho_t^\e \, dx$ in $(0,1) \times \R^d$ and $\rho_t^\e \, dx \to \rho_t$ narrowly in $\M(V)$ as $\e \to 0$, for all $t \in [0,1]$. Moreover $v^\e$ and $g^\e$ satisfy \eqref{eq:comp1} and \eqref{eq:comp2}, respectively.
Finally, for every $t \in [0,1]$ there holds  
\begin{equation} \label{eq:est_energy_reg_g}
\begin{gathered}
 \int_{\R^d}  |v^\e(t,x)|^2  \, \rho^\e_t(x) \, dx \leq   
\int_{\Om}  |v(t,x)|^2 \, d\rho_t(x)    \,, \\
\int_{\R^d}  |g^\e(t,x)|^2 \, \rho^\e_t(x)\, dx \leq   
\int_{\Om}  |g(t,x)|^2 \, d\rho_t(x)  \,. 
\end{gathered}
\end{equation}
\end{lemma}

\begin{proof}
By the interplay between weak differentiation and mollification, it is immediate to check that $(\rho^\e_t\,dx , v_t^\e,g_t^\e)$ solves the continuity equation in $(0,1) \times \R^d$ for all $0<\e <1$. The fact that $\rho_t^\e \,dx \to \rho_t$ narrowly is an immediate consequence of the properties of convolutions and of the convergence $\eta_\e \to 0$ as $\e \to 0$. %
We now prove that $v^\e$ satisfies \eqref{eq:comp1}. 
Notice that by definition $v_t^\e(x) = 0$ in $\R^d \smallsetminus (\overline{\Omega + B_1(0)})$ for every $t \in [0,1]$. Moreover $\rho_t^\e \geq \e$ in $V$ for all $t$. Therefore
\[
\int_0^1 \sup_{x\in \R^d} |v^\e(t,x)|\, dt %
\leq \frac{1}{\e} \int_0^1 \sup_{x\in\overline{\Omega + B_1(0)}} |(v_t \rho_t) \ast \xi_\e (x)|\, dt \\
 \leq \frac{1}{\e^{d+1}} \int_0^1 \int_{\R^d} |v_t(y)| d\rho_t(y)\, dt < \infty \\
\]
by \eqref{thm:lifting:1}. By direct calculation 
$
\nabla v_t^\e =  [((v_t\rho_t)\ast \nabla \xi_\e)\rho_t^\e -  ((v_t\rho_t)\ast  \xi_\e) (\rho_t \ast \nabla \xi_\e)] /(\rho^\e_t)^2\,,
$
so that
\begin{align*}
|\nabla v_t^\e| & \leq {\e^{-1}} |(v_t\rho_t)\ast \nabla \xi_\e| + \e^{-2} |(v_t\rho_t)\ast  \xi_\e)| |\rho_t \ast \nabla \xi_\e|  \\
& \leq \lbrack \e^{-1} \nor{\nabla \xi_\e}_\infty + \e^{-2} \nor{\xi_\e}_\infty \nor{\nabla \xi_\e }_\infty \rho_t(\Om) \rbrack \int_\Om |v_t(x)| \, d\rho_t(x) \,.
\end{align*}
As $t \mapsto \rho_t(\Om)$ is continuous, the quantity $C(\rho):=\max_t |\rho_t(\Om)|$ is well defined. Therefore 
\begin{align*}
\int_0^1 \text{Lip}(v^\e(t,\cdot), \R^d)\, dt = \int_0^1  \sup_{x\in \R^d}|\nabla v_t^\e(x)| dt \leq C(\e)C(\rho)%
\int_0^1\int_{\Om} v_t(x) d\rho_t(x) dt < \infty\,,
\end{align*}
where the last term is finite by \eqref{thm:lifting:1}. 
By similar computations and by \eqref{thm:lifting:1}, one can easily show that $g^\e$ satisfies \eqref{eq:comp2}. %
We now prove the first estimate in \eqref{eq:est_energy_reg_g}. Fix $t \in [0,1]$. If $\rho_t = 0$, there is nothing to prove. Otherwise we have 
\[
\begin{aligned} 
\int_{\R^d}  |v^\e(t,x)|^2   \, \rho^\e_t(x) \, dx  
& =  \int_{\Omega + B_\e(0)} \frac{|(v_t \rho_t) \ast \xi_\e|^2}{\rho_t \ast \xi_\e + \e}  \, dx 
\leq  \int_{\Omega + B_\e(0)} \frac{|(v_t \rho_t) \ast \xi_\e|^2}{\rho_t \ast \xi_\e }  \, dx \\
& =  \int_{\R^d} \left|\frac{(v_t \rho_t) \ast \xi_\e}{\rho_t \ast \xi_\e}\right|^2  (\rho_t \ast \xi_\e )\, dx  
 \leq   \int_{\R^d} \left|v(t,x)\right|^2  \, d\rho_t(x) \,,
\end{aligned}
\]
where in the last inequality we used Proposition \ref{prop:smoothmeasure}. Since $v(t,\cdot)$ vanishes in $\R^d \smallsetminus \Om$, we conclude the first estimate in  \eqref{eq:est_energy_reg_g}. A similar argument yields the second estimate in  \eqref{eq:est_energy_reg_g}.
\end{proof}

\begin{remark} \label{rem:zero_solutions}
Notice that there exist nontrivial $\rho_t \in \pcurves$ which solve the continuity equation \eqref{cont weak} for $v$ and $g$ that satisfy the bound \eqref{thm:lifting:1}, but such that $\rho_t=0$ on an open interval in $[0,1]$. For example, consider $\Om :=[0,1]^2$, $v(t,x):=(0,0)$, $g(t,x):=-(t-1/2)^{-2} \rchi_{(0,1/2)}(t)$, $\gamma(t):=(1/2,1/2)$, $h(t):=\exp\left( 2- 2(1-2t)^{-1} \right) \rchi_{(0,1/2)} (t)$. It is easy to check that $\rho_t :=h(t)\delta_{\gamma(t)}$ belongs to $\pcurves$, 
 solves \eqref{cont weak} and \eqref{thm:lifting:1} holds.

The above is the reason why we add $\eta_\e$ to the definition of $\rho_t^\e$ in \eqref{eq:regularizations}, since otherwise, we could have $\rho_t \ast \xi=0$ for some $t$, independently on the chosen mollifier. We remark that the addition of $\eta_\e$ is the main difference to the smoothing results \cite[Lemma 8.1.9]{ags} and \cite[Lemma 3.10]{maniglia}, where narrowly continuous measure solutions $\rho_t$ to \eqref{cont weak} are smoothed via $\rho_t \ast \xi$ with $\xi$ being a mollifier.
\end{remark}

\subsection{Proof of the direct implication of Theorem \ref{thm:lifting}} \label{sec:proof_direct} 
We divide the proof of the direct implication of Theorem \ref{thm:lifting} into two steps: First we construct a measure $\sigma \in \M^+(\car_\Om)$ satisfying \eqref{eq:representation}; Then we prove that $\sigma$ is concentrated on $\Ha_\Om^{v,g}$.

\medskip

\emph{Step 1 - Construction of the measure $\sigma$}.

\medskip

Let $V:=B_R(0)$, with $R>0$ as in \eqref{def_of_R}. 
For each $0<\e<1$ define $\rho^\e_t,  v^\e_t, g_t^\e$ according to \eqref{eq:regularizations}. By Lemma \ref{lem:convolution} the triple $(\rho^\e_t \, dx, v^\e_t,g_t^\e)$ solves $\de_t \rho_t^\e \, dx + \div(v_t^\e \rho_t^\e \, dx )= g_t^\e \rho_t^\e \, dx$ in $(0,1) \times \R^d$ and satisfies the bounds \eqref{eq:comp1}, \eqref{eq:comp2}, \eqref{eq:est_energy_reg_g}. As \eqref{thm:lifting:1} holds, we can then apply Proposition~\ref{prop:solutionbypush} and obtain the representation
\begin{equation} \label{eq:repr_reg_bis}
\rho_t^\e \,dx = (X_{(\cdot)}^\e(t))_{\#} [ R_{(\cdot)}^\e(t)  \,dx] \,, \quad R_x^\e (t)=\rho_0^\e(x) \, e^{\int_0^t g^\e(s,X_x^\e(s))\, ds} \,,  
\end{equation}
where $X^\e_x$ and $R^\e_x$ are the unique solutions to the ODEs system
\begin{equation}\label{systemode}
\left\{\begin{array}{l}
\dot{X}_x^\e(t) = v^\e(t,X^\e_x(t))\,,
\medskip\\
X_x^\e(0)= x\,,
\end{array}
\right.
\qquad 
\left\{\begin{array}{l}
\dot{R}_x^\e(t) = g^\e(t,X_x^\e(t)) \,  R^\e_x(t) \,,
\medskip\\
R^\e_x(0) = \rho_0^\e(x)\,,
\end{array}
\right.
\end{equation}
for all $t \in [0,1]$. 
We define $\sigma^\e$ by duality as
\begin{equation}\label{eq:defofmeasure}
\int_{\car_V} \f(\gamma,h) \, d\sigma^\e(\gamma,h) := \int_{V} \f \left(X_x^\e, \frac{R_x^\e}{\int_0^1 R_x^\e(t) \, dt} \right) \, \left( \int_0^1 R_x^\e(t) \, dt \right) \,  dx \,, %
\end{equation}
for all $\f \in C_b(\car_V)$. Here we adopted the notation $\f(\gamma,h)$ to denote that $\f$ is evaluated on the curve $t \mapsto h(t)\delta_{\gamma(t)}$. 
We claim that $\sigma^\e \in \M^+(\car_V)$. First, we show that $\sigma_\e$ is well-defined. Indeed, notice that $\rho_t^\e \geq \e$ in $V$ by construction. Hence by \eqref{eq:repr_reg_bis} and \eqref{eq:comp2} we estimate
\begin{equation} \label{eq:int_h}
\int_0^1 R_x^\e(t) \, dt =  \rho^\e_0(x) \int_0^1 e^{\int_0^t g^\e(s,X_x^\e(s))\, ds}\, dt \geq C(\e) \rho_0^\e(x) \geq  C(\e) \e  > 0\,,
\end{equation}
for all $x \in V$, where $C(\e)>0$ is a constant depending only on $\e$.
Also, by construction, $v^\e(t,x) = 0$ for $x \in \R^d \smallsetminus \overline{\Omega + B_1(0)}$ and $t \in [0,1]$. Therefore from \eqref{systemode} we deduce that $X_x^\e(t) \in V$ for each initial datum $x\in V$ and $0<\e < 1$. Thanks to Lemma \ref{lem:narrow_cone}, we then obtain that the curve $t \mapsto (\int_0^1 R_x^\e(s) \, ds)^{-1} R_x^\e(t) \delta_{X_x^\e(t)}$ belongs to $\car_V$ for all $x \in V$. Moreover the map 
\[
x \mapsto \left( t \mapsto \left(\int_0^1 R_x^\e(s) \, ds \right)^{-1} R_x^\e(t) \delta_{X_x^\e(t)} \right)
\] 
is continuous from $\R^d$ to $(\car_V,\D)$, which is a consequence of the  stability of solutions to \eqref{systemode} with respect to the initial datum $x \in \R^d$, and of the fact that uniform convergence of weights and curves implies $\D$-convergence of measures in $\car_V$, thanks to \eqref{eq:char_flat}. This proves that the definition at \eqref{eq:defofmeasure} is well posed. We now estimate the total variation of $\sigma^\e$. By  \eqref{eq:regularizations} and standard properties of convolutions we have that $\nor{\rho_t^\e \,dx}_{\M(V)} \leq \nor{\rho_t}_{\M(\Om)} + \e |V|$ for all $t \in [0,1]$, $\e \in (0,1)$. 
Hence, by testing $\sigma^\e $ against $\f \equiv 1$ and using \eqref{eq:repr_reg_bis} we infer
\begin{equation} \label{eq:estimate_totvar}
\norm{\sigma^\e}_{\M (\car_V)} \leq \int_{V} \int_0^1 R_x^\e(t) \, dt \,  dx =  \int_0^1  \nor{\rho_t^\e \,dx}_{\M(V)} \, dt  
\leq \nor{\rho}_{\M(X_\Om)} + \e |V|\,.
\end{equation}
Moreover $\sigma^\e \geq 0$ by \eqref{eq:int_h}, showing that $\sigma_\e \in \M^+(\car_V)$. We also remark that $\sigma^\e$ is concentrated on $\Ha_V$, given that the curve $t \mapsto (\int_0^1 R_x^\e(s) \, ds)^{-1} R_x^\e(t) \delta_{X_x^\e(t)}$ belongs to $\Ha_V$ for each $x \in V$, thanks to the regularity of solutions to \eqref{systemode}.

We now show that the family $\sigma^\e$ is tight as $0<\e<1$, by proving that
\begin{equation} \label{eq:bound_F}
\sup_{0<\e<1} \, \int_{\car_V} \mathscr{F}(\gamma,h) \, d \sigma^\e(\gamma,h) <  \infty\,,
\end{equation}
where $\mathscr{F} \colon (\car_V,\D) \to [0,\infty]$ is the functional defined at \eqref{inf_F}: Indeed assume that \eqref{eq:bound_F} holds; by Proposition \ref{lem:compact_sublevels} we know that $\mathscr{F}$ is $\D$-measurable and its sublevels are compact. Moreover $(\car_V,\D)$ is a complete separable metric space (see Proposition \ref{prop:complete}). Thus we can apply Proposition \ref{prop:tight} to conclude tightness for $\sigma^\e$. Let us proceed with the proof of \eqref{eq:bound_F}. First notice that \eqref{eq:defofmeasure} can be tested against $\mathscr{F}$, as $\sigma^\e\geq 0$ and $\mathscr{F}$ is lower semi-continuous with respect to the metric $\D$ (Proposition \ref{lem:compact_sublevels}). Since $\sigma^{\e}$ is concentrated on $\Ha_V$, by formula \eqref{eq:repr_F} and one-homogeneity of $\mathscr{F}$ with respect to $h$ we have
\begin{equation} \label{eq:bound_repr}
\int_{\car_V} \mathscr{F}(\gamma,h) \, d\sigma^{\e}(\gamma,h) = \int_{\car_V} \int_{\{h>0\}} 
\frac{\beta}{2} \, |\dot{\gamma}(t)|^2 h(t) + \frac{\beta\delta^2}{2} \,  \frac{|\dot{h}(t)|^2}{h(t)} + \alpha h(t) \, dt \, d\sigma^{\e}(\gamma,h)\,.
\end{equation}
By \eqref{systemode}, \eqref{eq:repr_reg_bis} and \eqref{eq:est_energy_reg_g}  we estimate
\begin{align*}
\int_{\car_V} \int_0^1 |\dot{\gamma}(t)|^2 h(t)\, dt \,  d\sigma^\e &  = 
\int_{V}\int_0^1  |\dot{X}_x^\e(t)|^2 \, R_x^\e(t) \, dt \,dx 
= \int_0^1  \int_V |v^\e(t,X^\e_x(t))|^2 \, R_x^\e(t) \,dx \, dt \\
& =  \int_0^1 \int_{\R^d} |v^\e(t,x)|^2 \, \rho_t^\e(x) \, dx \, dt 
\leq \int_0^1 \int_{\Om} |v(t,x)|^2 \, d\rho_t(x)\,  dt \,, 
\end{align*}
and, in a similar fashion, 
\begin{align*}
\int_{\car_V} \int_{\{h>0\}} \frac{|\dot{h}(t)|^2}{h(t)}\,dt \, d\sigma^\e  &  = 
\int_{V} \int_{\{h>0\}}  \frac{|\dot{R}_x^\e(t)|^2}{R_x^\e(t)} \,dt \, dx 
=\int_{\{h>0\}} \int_V |g^\e(t,X_x^\e(t))|^2 \, R_x^\e(t) \, dx \,dt \\
&  = \int_{\{h>0\}} \int_{\R^d} |g^\e(t,x)|^2  \,\rho_t^\e(x) \, dx \,dt  
 \leq \int_{0}^1 \int_{\Om} |g(t,x)|^2  \, d\rho_t(x) \, dt  \,.
\end{align*}
Finally, by \eqref{eq:estimate_totvar}, 
\begin{align}\label{eq:boundintotalvariation}
	\int_{\car_V}\int_0^1 h(t) \,dt \, d\sigma^\e = 
\int_{V} \int_0^1 R_x^\e(t) \, dt \, dx \leq \nor{\rho}_{\M(X_\Om)} + \e |V|  \,.
\end{align}
From the above estimates, and \eqref{eq:bound_repr},  \eqref{thm:lifting:1}, we conclude \eqref{eq:bound_F}, proving that $\{\sigma^\e\}_\e$ is tight. 
Since $\{\sigma_\e\}_\e$ is uniformly bounded by  \eqref{eq:estimate_totvar}, we can apply the compactness result \cite[Theorem 8.6.2]{bogachev} to infer the existence of $\sigma \in \M^+(\car_V)$ such that $\sigma_\e \to \sigma$ narrowly as $\e \to 0$. In particular, as $\mathscr{F}$ is \mbox{$\D$-lower semi-continuous}, $\mathscr{F}\geq 0$ and \eqref{eq:bound_F} holds, we can apply \eqref{prop:narrow_unbounded:1} in Proposition \ref{prop:narrow_unbounded} to infer $\int_{\car_V} \mathscr{F}(\gamma,h) \, d\sigma(\gamma,h) < \infty$. From the latter, we see that $\sigma$ is concentrated on the domain of $\mathscr{F}$, that is, on the set $\Ha_V$ (Proposition \ref{lem:compact_sublevels}).

We now prove that $\sigma$ satisfies the representation formula  \eqref{eq:representation}. To this end, let $\f \in C_c(X_V)$ and define the map 
$\Psi(\gamma,h):= \int_0^1 h(t) \f(t,\gamma(t))\, dt$
for $\rho=h\delta_\gamma \in \car_V$. We claim that $\sigma^\e$ according to \eqref{eq:defofmeasure} can be tested against $\Psi$: indeed, first notice that 
$\Psi$ is $\D$-continuous. This is because the map $(\gamma,h) \mapsto h(t) \f(t,\gamma(t))$ is continuous for $t$ fixed, by Lemma~\ref{lem:evaluationsecond}; if $\D(\rho^n, \rho) \rightarrow 0$, then $\nor{h_n -h}_\infty \leq \D(\rho^n, \rho)$ by \eqref{eq:char_flat}, so that $\{h_n\}_n$ is uniformly bounded; thus by dominated convergence we conclude continuity for $\Psi$, since $\f$ is bounded, and since $\f (t,\gamma_n(t)) \to \f(t,\gamma(t))$ when $h(t)>0$. Moreover, thanks to \eqref{eq:boundintotalvariation},  we can estimate 
\begin{equation} \label{review:test}
\begin{aligned}
\int_{V} \left|\Psi\left(X_x^\e, \frac{R_x^\e}{\int_0^1 R_x^\e(t) \, dt} \right) \right| \, \left( \int_0^1 R_x^\e(t) \, dt \right) \,  dx  
& \leq \nor{\f}_\infty (\nor{\rho}_{\M(X_\Om)} + \e |V| ) \,,
\end{aligned}
\end{equation}
showing that the right-hand side of \eqref{eq:defofmeasure} tested against $|\Psi |$ is finite. The fact that $\sigma^\e$ can be tested against $\Psi$ follows immediately. By \eqref{eq:defofmeasure}, the latter yields 
\begin{equation} \label{eq:limit1000}
 \int_{\car_V}\Psi(\gamma,h)    \,  d\sigma^\e(\gamma,h)  = \int_V \int_0^1  \f(t,X_x^\e(t)) R^\e_x(t) \, dt\, dx
= \int_0^1\int_{V}\f(t,x)\, \rho_t^\e(x)\, dx\,dt  \,,
\end{equation}
where in the last equality we used \eqref{eq:repr_reg_bis}. We want to pass to the limit as $\e \to 0$ in \eqref{eq:limit1000}. Notice that the right-hand side passes to the limit since $dt \otimes \rho_t^\e \,dx \weakstar dt \otimes \rho_t$ in $\M(X_V)$: Indeed $\rho_t^\e \, dx \to \rho_t$ narrowly in $\M(V)$ for all $t$ (Lemma~\ref{lem:convolution}) and $\rho_t^\e \, dx$ is uniformly bounded in $\M(V)$, as previously shown. Concerning the left-hand side of \eqref{eq:limit1000}, we first claim that the map $|\Psi|$  is uniformly integrable with respect to $\sigma^\e$ according to definition \eqref{def:UI}. To this end, for $k>0$ define 
$A_k := \{(\gamma,h) \in \car_V : |\Psi(\gamma,h)|    \geq k\}$.
By the definition of $\sigma^\e$ and by  \eqref{eq:boundintotalvariation} we get
\[
\begin{aligned} 
\int_{A_k} \left| \Psi(\gamma,h) \right| \, d\sigma^\e(\gamma,h) & 
 \leq \frac1k \int_{\car_V} |\Psi(\gamma,h)|^2 \, d\sigma^\e(\gamma,h) 
& \leq \frac{\nor{\f}_\infty^2}{k} \int_V \int_0^1 R^\e_x(t) \, dt \, dx \\ 
& \leq \frac{\nor{\f}_\infty^2}{k} ( \norm{\rho}_{\M(X_\Om)} + |V|  ) \,,
\end{aligned}
\]
concluding uniform integrability for $|\Psi|$. 
Therefore we can invoke \eqref{prop:narrow_unbounded:2} and pass to the limit as $\e \rightarrow 0$ in the left-hand side of \eqref{eq:limit1000}. After one application of Fubini's Theorem we obtain
\begin{equation} \label{eq:representation:integral}
\int_0^1  \int_{\car_V}h(t) \f(t,\gamma(t))   \,  d\sigma(\gamma,h) \, dt  =  
\int_0^1\int_{V}\f(t,x)\, d\rho_t (x)\,dt \,, \,\,\, \,\,\, \text{ for all } \, \f \in C_c(X_V) \,.
\end{equation}
We claim that  \eqref{eq:representation} descends from \eqref{eq:representation:integral}. In order to show it, we first derive a pointwise in time version of \eqref{eq:representation:integral}. %
We start by showing that $\Theta(t):=\int_{\car_V}h(t) \f(t,\gamma(t))\,  d\sigma(\gamma,h)$
is continuous for all $\f \in C_c(X_V)$ fixed. Indeed, the map $t \mapsto h(t) \f(t,\gamma(t))$ is continuous for each fixed $(\gamma,h) \in \car_V$, by Lemma~\ref{lem:narrow_cone}.  
Moreover, by recalling that $\sigma^\e$ is concentrated on solutions of \eqref{systemode}, and by arguing as in the proof of \eqref{thm:lifting:3}, we can show that for all $\e$ it holds that
\[
\int_{\car_V} \nor{h}_\infty \, d \sigma^\e (\gamma,h) \leq   
\int_{\car_V} \int_0^1 h(t) (1 + |g^\e (t,\gamma(t))|) \, dt \, d \sigma^\e (\gamma,h) \,.
\]
Therefore, by employing \eqref{eq:boundintotalvariation}, \eqref{eq:defofmeasure}, \eqref{eq:repr_reg_bis}, \eqref{eq:est_energy_reg_g}, and setting $C:=\norm{\rho}_{\M(X_\Om)} + |V|$,  we obtain
\[
\begin{aligned}
\int_{\car_V} \nor{h}_\infty \, d \sigma^\e (\gamma,h) & \leq C +  \int_{V} \int_0^1  R_x^\e(t) |g^\e (t,X_x^\e(t))| \, dt\, dx 
 =  C +  \int_{0}^1 \int_V  |g^\e (t,x)| \,  \rho_t^\e(x)  \, dx \, dt \\
& \leq C + \left( \int_0^1  \norm{\rho_t^\e \, dx}_{\M(V)}  \, dt \right)^{1/2} 
\left(\int_{0}^1 \int_V  |g^\e (t,x)|^2 \,  \rho_t^\e(x) \,dx  \, dt
\right)^{1/2} \\
& \leq C + C^{1/2} \left(\int_{0}^1 \int_V  |g (t,x)|^2 \, d \rho_t(x)  \, dt \right)^{1/2}   \,,
\end{aligned}
\]
and the last term is bounded by assumption \eqref{thm:lifting:1}. 
Finally, the map $(\gamma,h) \in \car_V \mapsto \nor{h}_{\infty}$ is $\D$-continuous and non-negative, therefore by the narrow convergence $\sigma^\e \to \sigma$ and \eqref{prop:narrow_unbounded:1} we infer
$
\int_{\car_V} \nor{h}_\infty \, d \sigma (\gamma,h) < \infty.
$
By dominated convergence we then conclude continuity of $\Theta$. As a byproduct of this argument, we have additionally shown that $\sigma \in \M^+_1(\car_V)$.  Notice that also the map $t \mapsto  \int_V \f(t,x) \, d \rho_t(x)$ is continuous, as a consequence of the narrow continuity of $t \mapsto \rho_t$. Testing \eqref{eq:representation:integral} against $\f(t,x):=a(t)b(x)$ for $a \in C_c(0,1)$, $b \in C(V)$, yields 
\begin{equation} \label{eq:representation:proof}
\int_{\car_V} h(t) b (\gamma(t)) \, d \sigma(\gamma,h) = \int_V b(x) \, d \rho_t(x) \,, \,\,\,\,\,\, \text{ for all } \,\,\, b \in C(V) ,\,\, t \in [0,1]\,.
\end{equation}
Fix $t \in [0,1]$ and $b \in C(V)$ such that $b=0$ in $\Om$ and $b>0$ in $V \smallsetminus \Om$. Recalling that $\rho_t$ is concentrated on $\Om$, from \eqref{eq:representation:proof} we obtain a set $E_t \subset \car_V$ such that $\sigma(\car_V \smallsetminus E_t)=0$ and $h(t)b(\gamma(t))=0$ for all $(\gamma,h) \in E_t$. In particular, by definition of $b$,
\begin{equation} \label{eq:representation:100}
\gamma(t) \in \Om  \,\, \text{ if } \,\, h(t)>0 \,, 
\end{equation}
for all $(\gamma,h) \in E_t$.
Let $Q \subset [0,1]$ be a dense countable subset and define $E:=\cap_{t \in Q} E_t$, so that $\sigma(\car_V \smallsetminus E)=0$ and \eqref{eq:representation:100} holds for all $(\gamma,h) \in E$, $t \in Q$, that is, $\gamma( \{h>0\} \cap Q ) \subset \Om $ for  $\sigma$-a.e.~$(\gamma,h) \in \car_V$.
By density of $Q$ and continuity of $h, \gamma$ we deduce
$\gamma(\{h>0\}) \subset \Om$ for $\sigma$-a.e.~$(\gamma,h) \in \car_{V}$, 
from which we conclude concentration of $\sigma$ on $\car_\Om$. Since we already showed that $\sigma$ is concentrated on $\Ha_V$ we also infer that $\sigma$ is concentrated on $\Ha_\Om$. It is immediate to check that $\car_\Om$ is $\D$-closed in $\car_V$, and hence $\D$-measurable. Therefore we can restrict $\sigma$ to $\car_\Om$ to obtain a measure in $\M^+_1(\car_\Om)$ satisfying \eqref{eq:representation}, as claimed.

\medskip

\emph{Step 2 - $\sigma$ is concentrated on $\Ha_\Om^{v,g}$.}

\medskip

So far we have constructed a measure $\sigma \in \M^+_1(\car_\Om)$ concentrated on $\Ha_\Om$ and satisfying \eqref{eq:representation}. We now prove that $\sigma$ is concentrated on $\Ha_\Om^{v,g}$, i.e., that the ODEs \eqref{ODE1}-\eqref{ODE2} hold for $\sigma$-a.e.~$(\gamma,h)$ in $\Ha_\Om$. 
This claim follows from two preliminary estimates, whose proof we postpone for a moment: For $\bar v \in C_c(X_\Om;\R^d), \bar g \in C_c(X_\Om)$ and any $\f \in C_c^1(0,1)$, there exists a constant $C>0$ depending only on $\f$ and on the radius of $V$, such that
\begin{gather} \label{eq:concentrationh}
\int_{\car_\Om} \left| \int_0^1 h(t) \f'(t) + h(t)\,  \bar g(t,\gamma(t)) \f(t)\, dt \right| \, d\sigma \leq C \int_{X_\Om} |\bar g -  g| \, d\rho_t \, dt\,, \\
\int_{\car_\Om}\left|\int_{0}^1 h(t)  \gamma(t) \cdot  \f'(t) + \Psi_{\bar v,\bar g}(t,\gamma,h) \cdot \f(t) \, dt\, \right| d\sigma \leq C  \int_{X_\Om} |\bar v - v|  + |\bar g - g| \, d\rho_t \, dt \label{eq:concentrationv}\,,
\end{gather}
where $\Psi_{\bar v,\bar g}(t,\gamma,h) := h(t) \gamma(t)\, \bar g(t,\gamma(t))  + h(t) \, \bar v(t,\gamma(t))$. We start by showing \eqref{ODE2}. 
By the energy bound \eqref{thm:lifting:1} and H\"older's inequality, we can find two sequences  
$\{v_n\}_n$ in $C_c(X_\Om;\R^d)$ and $\{g_n\}_n$ in $C_c(X_\Om)$ converging to $v$ and $g$ in $L^1_{\rho}(X_\Om)$, respectively. By \eqref{eq:representation} and Remark~\ref{rem:representationL1} we get
\begin{equation} \label{eq:concentrationh400}
\int_{\car_\Om}   \left| \int_0^1 h(t) ( g(t,\gamma(t)) - g_n(t,\gamma(t))) \f(t)\, dt \right| \, d\sigma
\leq \nor{\f}_\infty \int_{X_\Om} |g_n -  g| \, d\rho_t \, dt\,.
\end{equation}
Hence by \eqref{eq:concentrationh} with $\bar g :=g_n$, \eqref{eq:concentrationh400}, and triangle inequality we get
\[
\int_{\car_\Om}   \left| \int_0^1 h(t) \f'(t) + h(t) g(t,\gamma(t)) \f(t)\, dt \right| \, d\sigma
\leq C  \int_{X_\Om} |g_n -  g| \, d\rho_t \, dt \to 0\,,
\]
as $n \to \infty$, for every test function $\f \in C_c^1(X_\Om)$. Therefore
\begin{equation} \label{eq:density}
\int_0^1 h(t) \f'(t) +h(t)  g(t,\gamma(t)) \f(t)\, dt = 0 \quad \text{ for all } \,\,\, (\gamma,h) \in E_\f\,, 
\end{equation}
where $\sigma(\car_\Om \smallsetminus E_\f)=0$ and $E_\f$ depends on $\f$. Let $D \subset C_c^1 (0,1)$ be a dense countable set and $E:=\cap_{\f \in D} E_\f$, so that $\sigma(\car_\Om \smallsetminus E)=0$ and \eqref{eq:density} holds for all $\f \in D$ and $(\gamma,h) \in E$. 
Consider $\f(t,x):=1+|v(t,x)|+|g(t,x)|$ and notice that $\f(t,\cdot) \in L^1_{\rho_t}(\Om)$ for a.e.~$t \in (0,1)$, thanks to \eqref{thm:lifting:1} and narrow continuity of $\rho_t$. Hence, by Remark~\ref{rem:representationL1} applied to $\f$, we conclude that $\sigma$ satisfies \eqref{eq:conv_bound}. Therefore  %
there exists a set $F$ with $\sigma(\car_\Om \smallsetminus F)=0$ and such that 
\begin{equation} \label{eq:en_bound:1}
\int_0^1 h(t) (1 + |v(t,\gamma(t))| +|g(t,\gamma(t))|  ) \, dt < \infty
\end{equation}
for all $(\gamma,h) \in F$. Consider now $\f \in C_c^1(0,1)$ and $\f_n \in D$ such that $\f_n \to \f$ in $C_c^1(0,1)$. As a consequence of \eqref{eq:density}, for any $(\gamma,h)$ in $E \cap F$ we have
\[
\left| \int_0^1 h(t) \f'(t) +h(t)  g(t,\gamma(t)) \f(t)\, dt \right| \leq \nor{\f_n - \f}_{C^1} \, \int_0^1 h(t) (1 + |g(t,\gamma(t))| ) \, dt \to 0\,,
\]
as $n \to \infty$, so that \eqref{eq:density} holds for all $\f \in C^1_c(0,1)$ and $(\gamma,h) \in E \cap F$. Therefore
\begin{equation} \label{eq:density:2}
\dot h = g(t,\gamma(t)) h(t)
\end{equation}
in the sense of distributions for $\sigma$-a.e. $(\gamma,h) \in \car_\Om$. Since $\sigma$ is concentrated on $\Ha_\Om$, the distributional formulation of \eqref{eq:density:2} coincides with the a.e.~one, so that \eqref{ODE2} holds. We now prove \eqref{ODE1}, which follows by similar arguments. First, by \eqref{eq:representation} and Remark~\ref{rem:representationL1}, we estimate  
\[
\begin{aligned}
\int_{\car_\Om}  \bigg| \int_0^1    (\Psi_{v,g} & (t,\gamma,h)  - \Psi_{v_n,g_n} (t,\gamma,h)) \cdot \f(t)  \, dt \bigg| \, d \sigma \\
& \leq \nor{\f}_\infty  \max\{1,R\}  \int_{X_\Om}   
 |v_n - v| + |g_n - g| \, d\rho_t \, dt \,,
\end{aligned}
\]
where $R$ is as in \eqref{def_of_R}. 
By applying \eqref{eq:concentrationv} to $\bar v :=v_n$, $\bar g :=g_n$ and by triangle inequality we infer
\[
\int_{\car_\Om}\left|\int_{0}^1 h(t)  \gamma(t) \cdot  \f'(t) + \Psi_{v,g}(t,\gamma,h) \cdot \f(t) \, dt\, \right| d\sigma \leq C \int_{X_\Om} |v_n - v|  + |g_n - g| \, d\rho_t \, dt \to 0\,, 
\]
as $n \to \infty$, for all $\f \in C^1_c((0,1);\R^d)$.  
By reasoning as above, we can find a countable dense subset $\tilde{D}$ of $C^1_c((0,1);\R^d)$ and a set $\tilde{E}$ with $\sigma(\car_\Om \smallsetminus \tilde{E} )=0$ such that
\begin{equation} \label{eq:density:100}
\int_{0}^1 h(t)  \gamma(t) \cdot  \f'(t) + \Psi_{v,g}(t,\gamma,h) \cdot \f(t) \, dt = 0
\end{equation}
for all $\f \in \tilde{D}$, $(\gamma,h) \in \tilde{E}$. By \eqref{eq:density:100} and \eqref{eq:en_bound:1}, we conclude that 
\[
(h(t)\gamma(t))' =  h(t) \gamma(t) g(t,\gamma(t)) +
h(t) v(t,\gamma(t)) 
\]
in the sense of distributions for all $(\gamma,h) \in \tilde{E}\cap F$. Recall that \eqref{eq:density:2} holds in the sense of distributions in $E \cap F$. Moreover $\sigma$ is concentrated on $\Ha_\Om$, whose elements satisfy $h\gamma \in \AC^2([0,1];\R^d)$ and the product rule holds (see Lemma \ref{lem:hgammasolutioncontinuity}). Hence we can find a set $\tilde{F}$ such that $\sigma(\car_\Om \smallsetminus \tilde{F})=0 $, and that \eqref{ODE2} and
\[
h(t) \dot{\gamma}(t) = h(t) v(t,\gamma(t)) \quad \text{ for a.e. } \,\,\, t \in (0,1)\,,
\]  
hold for $\sigma$-a.e.~$(\gamma,h) \in \Ha_\Om$. This establishes \eqref{ODE1}.  

We are left to show \eqref{eq:concentrationh}-\eqref{eq:concentrationv}. 
We start by proving \eqref{eq:concentrationh}. 
First notice that the map
\begin{equation}\label{eq:testfunctionconcentration}
\phi(\gamma,h) := \left|  \int_0^1 h(t) \f'(t)+ h(t)  \bar g(t,\gamma(t))  \f(t)\, dt \right|
\end{equation}
in the left-hand side of \eqref{eq:concentrationh} is $\D$-continuous: indeed, if $\D(\rho^n, \rho) \to 0$, from \eqref{eq:char_flat} we have $\nor{h_n -h}_\infty \leq \D(\rho^n, \rho)$, so that $h_n$ is uniformly bounded; by Lemma~\ref{lem:evaluationsecond} it follows that  $(\gamma,h) \mapsto h(t) \f'(t)+ h(t)  \bar g(t,\gamma(t))  \f(t)$ is $\D$-continuous for every $t$; thus continuity of $\phi$ follows by dominated convergence. Extend $\bar{g}$ to zero outside of $\Om$ and set $\bar g^\e := [(\bar g\rho_t) \ast \xi_\e]/\rho_t^\e$, 
with $\xi_\e$ as in \eqref{eq:regularizations}. 
As $\phi$ is continuous, we can test \eqref{eq:defofmeasure} against $\phi$, integrate by parts, and use \eqref{eq:repr_reg_bis}-\eqref{systemode} to get
\begin{equation} \label{eq:concentrationh300}
\begin{aligned}
 \int_{\car_V}   \bigg| \int_0^1 & h(t) \f'(t)+    h(t) \bar g(t,\gamma(t)) \f(t)\, dt \bigg| \, d\sigma^\e(\gamma,h) \\
& \leq \nor{\f}_\infty \int_{X_V}   \left|\bar g - g^\e\right|\, \rho_t^\e \, dx\, dt  
 \leq \nor{\f}_\infty  \int_{X_V}   \left|\bar g^\e - g^\e\right| +  \left|\bar g^\e - \bar g\right|\, \rho_t^\e\, dx \, dt \,.
\end{aligned}
\end{equation}
We then estimate each term separately. First, recalling \eqref{eq:regularizations}, 
\begin{equation} \label{eq:concentrationh200}
\int_{X_V}   \left|\bar g^\e - g^\e\right|\, \rho_t^\e\,dx  \, dt  
 = \int_0^1  \int_{\Omega+B_\e(0)} \left|((g - \bar g) \rho_t ) \ast \xi_\e\right|\,  dx \,dt
 \leq  \int_{X_\Om} \left|g - \bar g\right|\, d\rho_t \,dt\,,
\end{equation}
by standard estimates on convolutions of measures. Second, 
\begin{align} 
\int_{X_V} |\bar  g^\e & -  \bar g|\, \rho_t^\e \,dx\,dt 
 = \int_{X_V} |(\bar g \rho_t)\ast \xi_\e -  \bar g (\rho_t \ast \xi_\e + \e)| \, dx \,dt \nonumber \\
 & \leq      \int_{X_V} |(\bar g \rho_t)\ast \xi_\e -  \bar g(\rho_t \ast \xi_\e)| \, dx \,dt + 
 \e  \int_{X_V} |\bar g| \,dx \,dt  \,. \label{q:concentrationh20000}
\end{align} 
The second term in \eqref{q:concentrationh20000} converges to zero as $\e\rightarrow 0$. Moreover, by the uniform continuity of $\bar g$, for each $\zeta>0$ there exists $\tilde{\zeta}>0$ such that $ |\bar g(t,y)- \bar g(t,x)| < \zeta$ whenever $|x-y|\leq \tilde{\zeta}$, $t \in [0,1]$. Therefore, for all $\e< \tilde{\zeta}$ and $x \in V$ we have
\begin{align*}
|(\bar g \rho_t)\ast \xi_\e -  \bar g (\rho_t \ast \xi_\e)|(x) %
 \leq   \int_{B_\e(x)} |\bar g(t,y)- \bar g(t,x)| \xi_\e(x-y)\, d\rho_t(y) \leq \zeta \, (\rho_t \ast \xi_\e)(x) \,,
\end{align*}
from which we infer
\begin{align*}
\int_{X_V} |(\bar g \rho_t)\ast \xi_\e -  \bar g(\rho_t \ast \xi_\e)|\, dx \,dt %
\leq \zeta  \int_{X_V} \rho_t \ast \xi_\e \, dx \,dt \leq \zeta \nor{\rho}_{\M(X_\Om)} \,.
\end{align*}
As $\zeta$ is arbitrary, by \eqref{q:concentrationh20000} we conclude that
$\int_{X_V} |\bar  g^\e  -  \bar g|\, \rho_t^\e \, dx \,dt  \to 0$. Thus, from \eqref{eq:concentrationh300}-\eqref{eq:concentrationh200}
\[
\limsup_{\e\rightarrow 0} \int_{\car_V} \left|  \int_0^1 h(t) \f'(t) +   h(t) \bar g(t,\gamma(t))  \f(t)\, dt \right| \, d\sigma^\e(\gamma,h) \leq  \nor{\f}_\infty \int_{X_\Om} |g - \bar g| \, d\rho_t \, dt \,.
\]
As $\sigma^\e \rightarrow \sigma$ narrowly,  $\sigma$ is concentrated on $\car_\Om$, $\phi$ at \eqref{eq:testfunctionconcentration} is continuous and non-negative, by \eqref{prop:narrow_unbounded:1} we conclude \eqref{eq:concentrationh}. 
We now show \eqref{eq:concentrationv}. 
First notice that the function in the left integral of \eqref{eq:concentrationv} is $\D$-continuous, a fact that can be proven similarly to \eqref{eq:testfunctionconcentration}.  Set $C := (R + 1)\nor{\f}_\infty$, with $R$ as in \eqref{def_of_R}.  
Similarly to the above proof of \eqref{eq:concentrationh}, we can integrate by parts and make use of  \eqref{eq:repr_reg_bis}-\eqref{eq:defofmeasure}, and estimate
\begin{equation} \label{eq:concentrationh5000}
\begin{aligned}
 \int_{\car_V} & \left|\int_{0}^1  h(t) \gamma(t) \cdot \f'(t) + \Psi_{\bar v, \bar g}(t,\gamma,h) \cdot \f(t) \, dt\, \right| d\sigma^\e(\gamma,h) \\
& \leq C \int_{X_V} R_x^\e(t) \Big[ \left| g^\e(t,X_x^\e(t))-\bar g(t,X_x^\e(t)) \right| + \left| v^\e(t,X_x^\e(t) ) -   \bar v(t,X_x^\e(t))\right|\Big] \, dt\,  dx\\
& = C\int_{X_V}\left| g^\e-\bar g \right|    +   \left| v^\e-   \bar v\right| \, \rho_t^\e\, dx  \, dt 
 \leq  C \int_{X_V}\left| g-\bar g \right|    +   \left| v-   \bar v\right| \, \rho_t^\e \, dx \,  dt + o(1)\,,
\end{aligned}
\end{equation}
where in the last inequality we employed \eqref{eq:concentrationh200} and the convergence $\int_{X_V} |\bar  g^\e  -  \bar g|\, \rho_t^\e \, dx\,dt  \to 0$ to estimate the first term, and similar estimates involving $v$ for the second, and where $o(1) \to 0$ as $\e \to 0$. By passing to the limes superior in \eqref{eq:concentrationh5000} and by recalling that $\sigma^\e \to \sigma$ narrowly, we can invoke \eqref{prop:narrow_unbounded:1} and obtain \eqref{eq:concentrationv}.

\medskip

 \emph{Step 3 - $\sigma$ is concentrated on $\Ha_\Om^1$.} 

\medskip

We are left to prove that $\sigma$ is concentrated on 
$\mathcal{H}^1_\Om = \{(h,\gamma)\in \car_\Om : \|h\|_{1} = 1\}$.
By definition, the measure $\sigma^\e \in \mathcal{M}^+(\car_V)$ introduced at \eqref{eq:defofmeasure} is concentrated on $\mathcal{H}^1_V =  \{(h,\gamma)\in \car_V : \|h\|_{1} = 1\}$, where $V$ is as in \eqref{def_of_R}. Also recall that we have proven $\sigma^\e \rightarrow \sigma$ narrowly. Moreover note that the set $\mathcal{H}^1_V$ is closed in $\car_V$, as an immediate consequence of \eqref{eq:char_flat}. 
Hence, by \eqref{prop:narrow_unbounded:1}, we get 
$\sigma(\car_V \smallsetminus \mathcal{H}^1_V) \leq \liminf_{\e \rightarrow 0} \sigma^\e(\car_V \smallsetminus \mathcal{H}^1_V) = 0$, 
showing that $\sigma$ is concentrated on $\mathcal{H}^1_V$. 
As $\sigma$ is concentrated on $\car_\Om$, we conclude. This ends the proof of Theorem \ref{thm:lifting}.

\begin{remark}\label{rem:proofRd}
	As mentioned in the introduction, it would be interesting to extend Theorem~\ref{thm:lifting} to the case of $\Om=\R^d$. Notice however that our construction of the measure $\sigma$ is heavily reliant on the boundedness of $\Om$: first, such assumption is needed in proving compactness of the sublevels of the functional $\mathscr{F}$ (see \eqref{eq:infinity_bound} and estimates after), which in turn allows to show tightness for the family $\sigma^\e$ (see \eqref{eq:bound_F} and argument immediately after); second, boundedness of $\Om$ is employed to provide the uniform bound \eqref{eq:estimate_totvar} on the norm of $\sigma^\e$. These arguments are crucial to obtain compactness for $\sigma^\e$ and, consequently, the representing measure $\sigma$ as their limit.    
\end{remark}

\section{Uniqueness of characteristics and uniqueness for the PDE} \label{sec:uniqueness}

The aim of this section is to apply Theorem \ref{thm:lifting} to relate uniqueness of the characteristics with uniqueness of solutions for the continuity equation with given initial data and minimal total variation.
Throughout the section $\Om \subset \R^d$ with $d \geq 1$ is the closure of a bounded 
domain. We denote $X_\Om:=(0,1)\times \Om$. Moreover $\car_\Om$ denotes the set defined at \eqref{eq:narr}, equipped with the distance $\D$ at \eqref{distance_sup}. We remind the reader that $(\car_V,\D)$ is a complete metric space (Proposition \ref{prop:complete}).   Let $v \colon X_\Om \rightarrow \R^d$ and $g \colon X_\Om \rightarrow \R$ be  measurable maps and recall the definition of $\Ha_\Om^{v,g}$ at \eqref{def:havg}, i.e., the set of regular characteristics of the ODEs system \eqref{ODE1}-\eqref{ODE2}. Also recall the definition of $\Ha_\Om^1$ at \eqref{eq:defh1} Finally, we define  the following set
\begin{equation*}
\begin{aligned}
\mathcal{D}_{v,g} := \left\{(t \mapsto \rho_t )  \in \pcurves \, \colon \, (\rho_t,v_t,g_t)  \text{ satisfy }  \partial_t \rho_t + \div (v\rho_t) = g\rho_t \text{ and }  \eqref{thm:lifting:1}\right\}\,.
\end{aligned}
\end{equation*}

We will prove the following result:

\begin{theorem}\label{thm:uniqueness}
Let $A\subset \Om$ be a measurable set. Suppose that: 

\begin{itemize}
\item[\textsc{(Hyp)}] For each $x\in A$ the solution $(\gamma,h) \in \Ha^{v,g}_\Om$ to \eqref{ODE1}-\eqref{ODE2} with initial value $(x,1)$ is unique in $[0,\tau)$ for every $\tau \in (0,1)$ such that $[0,\tau) \subset \{h>0\}$, i.e., if $(\gamma_1,h_1),(\gamma_2,h_2) \in \Ha^{v,g}_\Om$ solve \eqref{ODE1}-\eqref{ODE2} in $[0,\tau)$ with initial data $(x,1)$  and $h_1 >0$, $h_2>0$ in $[0,\tau)$, then $h_1=h_2$ and $\gamma_1=\gamma_2$ in $[0,\tau)$.
\end{itemize}
Then, for any initial data $\rho_0 \in \mathcal{M}^+(\Om)$ concentrated on $A$, the continuity equation $\partial_t \rho_t + \div (v\rho_t ) = g\rho_t $ admits at most one solution $\rho \in \mathcal{D}_{v,g}$ with initial data $\rho_0$ and such that
\begin{equation}\label{eq:minimalityofJ}
\| \rho\|_{\mathcal{M}(X_\Om)} \leq  \|\tilde \rho\|_{\mathcal{M}(X_\Om)} \quad \text{ for all }  \, \tilde \rho \in  \mathcal{D}_{v,g} \, \text{ such that } \, \tilde \rho_0 = \rho_
0\,.
\end{equation} 
\end{theorem}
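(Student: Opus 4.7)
The plan is to apply the superposition principle (Theorem \ref{thm:lifting}) to lift both solutions, use the minimality condition to restrict the supports of the lifts, and then invoke (Hyp) to identify these lifts through the fixed initial datum $\rho_0$. Let $\rho^1, \rho^2 \in \mathcal{D}_{v,g}$ be two solutions with $\rho^1_0 = \rho^2_0 = \rho_0$, both satisfying \eqref{eq:minimalityofJ}. For each $i = 1, 2$, Theorem \ref{thm:lifting} provides $\sigma^i \in \M_1^+(\car_\Om)$ concentrated on $\Ha_\Om^{v,g}$ with
\[
\int_\Om \varphi\, d\rho^i_t = \int_{\car_\Om} h(t)\varphi(\gamma(t))\, d\sigma^i(\gamma, h) \quad \text{for all }\varphi \in C(\Om),\, t \in [0, 1].
\]

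The first step would be a truncation argument. For each $(\gamma, h) \in \Ha_\Om^{v,g}$ set $\tau(\gamma, h) := \inf\{t \in [0, 1] : h(t) = 0\}$, with $\tau := 1$ if $h > 0$ on $[0, 1]$. Define the truncation $T(\gamma, h) := (\gamma, h\,\rchi_{[0,\tau]})$ (extending $\gamma$ constantly after $\tau$, where the weight vanishes). The vanishing $h(\tau) = 0$ together with boundedness of $\Om$ ensure that $T(\gamma, h)$ still lies in $\Ha_\Om^{v,g}$, and a direct inspection shows that $T$ is Borel measurable on $(\car_\Om, \D)$. Setting $\tilde{\sigma}^i := T_\# \sigma^i \in \M_1^+(\car_\Om)$, the converse direction of Theorem \ref{thm:lifting} yields a solution $\tilde{\rho}^i \in \mathcal{D}_{v,g}$ with $\tilde{\rho}^i_0 = \rho_0$ (since truncation does not alter $h(0)$ and $\gamma(0)$) and
\[
\|\tilde{\rho}^i\|_{\M(X_\Om)} = \int_0^1\int h\,\rchi_{[0,\tau]}\, d\sigma^i\,dt \leq \|\rho^i\|_{\M(X_\Om)}.
\]
Minimality \eqref{eq:minimalityofJ} forces equality, whence $h = h\,\rchi_{[0,\tau]}$ for $\sigma^i$-a.e.~curve. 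Thus $\sigma^i$ is concentrated on curves whose positivity set equals the initial interval $[0, \tau(\gamma, h))$; in particular the weight never becomes positive once it has vanished.

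Next, I would combine this concentration property with (Hyp). Since $\rho_0 = \int h(0)\delta_{\gamma(0)}\, d\sigma^i$ is concentrated on $A$, we have $\gamma(0) \in A$ for $\sigma^i$-a.e.~curve with $h(0) > 0$, and curves with $h \equiv 0$ are discarded as they contribute nothing to $\rho^i$. For such a curve, the rescaling $(\gamma, h/h(0))$ solves \eqref{eq:systemODE2} with initial data $(\gamma(0), 1)$ and is strictly positive on $[0, \tau)$. By (Hyp) any two such rescaled characteristics with the same starting point agree on their common positivity interval and glue into a unique maximal positive characteristic $(\gamma_x, h_x)$ defined on a maximal interval $[0, \tau^*(x))$; the definition of $\tau^*$ together with continuity of $h_x$ precludes $\tau(\gamma, h) < \tau^*(\gamma(0))$, so $\tau(\gamma, h) = \tau^*(\gamma(0))$ and
\[
\gamma(t) = \gamma_{\gamma(0)}(t),\qquad h(t) = h(0)\,h_{\gamma(0)}(t)\quad \text{for all } t \in [0, \tau^*(\gamma(0))),
\]
with both sides vanishing outside this interval (adopting the convention $h_x(t) := 0$ for $t \geq \tau^*(x)$).

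Finally, for $\varphi \in C(\Om)$ and $t \in [0,1]$, combining the pointwise identification above with the pushforward identity $\pi_\#(h(0)\sigma^i) = \rho_0$, where $\pi(\gamma, h) := \gamma(0)$, gives
\[
\int_\Om \varphi \, d\rho^i_t = \int h(0)\, h_{\gamma(0)}(t)\, \varphi(\gamma_{\gamma(0)}(t))\, d\sigma^i = \int_A h_x(t)\, \varphi(\gamma_x(t))\, d\rho_0(x),
\]
a quantity independent of $i$. Hence $\rho^1 = \rho^2$. The main obstacle is the truncation step: one must verify that $(\gamma, h\,\rchi_{[0,\tau]})$ belongs to $\Ha_\Om^{v,g}$ (which demands checking that $\tilde h$, $\sqrt{\tilde h}$ and $\sqrt{\tilde h}\,\gamma$ remain in $\AC^2$ across $\tau$, using $h(\tau)=0$ and boundedness of $\Om$), that $T$ is Borel measurable on $(\car_\Om,\D)$ so that $\tilde{\sigma}^i$ is well-defined, and that the integrability bound \eqref{eq:conv_bound} is inherited by $\tilde{\sigma}^i$ so that the converse direction of Theorem \ref{thm:lifting} applies. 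A secondary technical point is measurability of $x \mapsto (\gamma_x, h_x)$ in the final identity; this can be bypassed by disintegrating $h(0)\sigma^i$ along $\pi$ and exploiting that, by (Hyp), each conditional probability is supported on the single rescaling family $\{(\gamma_x, r h_x) : r > 0\}$.
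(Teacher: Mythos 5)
Your proposal is correct and follows essentially the same route as the paper's proof: lift both solutions via Theorem \ref{thm:lifting}, cut the curves off at their first vanishing time and use the minimality \eqref{eq:minimalityofJ} to force the lifts to be concentrated on curves whose positivity set is an initial interval (this is exactly the content of Lemmas \ref{lem:cutoffoperator} and \ref{lem:minimalityimpliesconcentration}), then disintegrate over the starting point against $\rho_0$ (Lemma \ref{lem:disintegration}) and collapse each fibre by \textsc{(Hyp)}. The one real difference is how you resolve the scaling ambiguity in the decomposition $h(t)\delta_{\gamma(t)}$: the paper uses the additional property that the constructed lift is concentrated on $\{\|h\|_{1}=1\}$ (Lemma \ref{lem:l1isthekey}), so that each fibre $E_x$ is literally a singleton, whereas you rescale by $h(0)$ and push $h(0)\sigma^i$ forward to $\rho_0$; this bypasses Lemma \ref{lem:l1isthekey} at the price of the extra observation, which you sketch correctly, that all fibre curves share the same vanishing time (via \textsc{(Hyp)} and continuity of the weights, the same mechanism the paper uses to show $E_x$ is a singleton), and it also correctly disposes of curves with $h(0)=0$, since after the truncation step these must vanish identically. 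The verifications you flag are precisely the paper's auxiliary lemmas; note in particular that Borel measurability of the truncation operator is not a ``direct inspection'' --- the paper proves it by approximating the cut-off with continuous maps built from the lower semicontinuous vanishing-time functional --- and that membership of the truncated curve in $\Ha_\Om^{v,g}$ and the inheritance of \eqref{eq:conv_bound} are checked exactly as in Lemmas \ref{lem:cutoffoperator} and \ref{lem:minimalityimpliesconcentration}.
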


In the next section we provide several auxiliary lemmas and definitions, which will be instrumental in proving Theorem \ref{thm:uniqueness}. The proof of Theorem \ref{thm:uniqueness} will be carried out in Section \ref{sec:proof_uniqueness}.

\subsection{Auxiliary results}

Define the following subset of $\car_\Om$:
\begin{equation}\label{eq:essestar}
\car_\Om^* := \{(\gamma,h) \in \car_\Om : \{h>0\} = [0,1] \cap (-\infty,\tau) \text{ for some } \tau \in \R\}\,.
\end{equation}

The first step is to prove that condition \eqref{eq:minimalityofJ} implies that the measure $\sigma$ obtained by  Theorem~\ref{thm:lifting} is concentrated on $\car_\Om^*$.
To this aim, we define a cut-off operator on the space $\car_\Om$.

\begin{definition}[Cut-off operator]\label{def:cutoffoperator}
Define the \textit{vanishing time} map $\tau : \car_\Om \rightarrow [0,\infty]$ as
\begin{equation*}
\tau(\gamma,h) := 
\begin{cases}
\argmin \, \{t\in [0,1] \, \colon \, h(t) = 0\} & \text{ if } \, \{t\in [0,1] \, \colon \, h(t) = 0\} \neq \emptyset\,,\\
\infty & \text{ otherwise},
\end{cases}
\end{equation*}
and the cut-off operator $G:\car_\Om \rightarrow \car_\Om$ as 
$G(\gamma,h) := (\gamma, h \rchi_{[0,\tau(h,\gamma))})$.
\end{definition}

\begin{lemma}\label{lem:cutoffoperator}
Let $\tau$ and $G$ be as in in Definition \ref{def:cutoffoperator}. Then $\tau$ is lower semi-continuous and $G$ is measurable. Moreover,  $G(\car_\Om) \subset \car_\Om^*$, $G(\Ha_\Om^{v,g}) \subset \Ha_\Om^{v,g}$, and the set $\car_\Om^*$ is measurable.
\end{lemma}

\begin{proof}
We start by proving that $\tau$ is lower semi-continuous. 
Assume that $\D(\rho^n,\rho) \to 0$ as $n \to \infty$.
Set $\tau_n:=\tau(\rho^n)$, $\tau:=\tau(\rho)$. Without loss of generality we can suppose that $
\tau^* := \lim_{n\rightarrow \infty}\tau_n= \liminf_{n\rightarrow \infty} \tau_n< \infty$.
Thus, by definition, we have that $h_n(\tau_n) = 0$ for $n$ sufficiently large. Therefore
\begin{equation}\label{eq:limitis2}
h(\tau^*) \leq |h(\tau^*) - h(\tau_n)| + |h(\tau_n) - h_n(\tau_n)|\,.
\end{equation}
Notice that $\|h_n - h\|_\infty \rightarrow 0$ by \eqref{eq:char_flat} and $\D(\rho^n,\rho) \to 0$. Hence the second term in \eqref{eq:limitis2} converges to zero as $n \to \infty$.   
Thanks to the continuity of $h$ and the convergence $\tau_n \to \tau^*$ also the first term in \eqref{eq:limitis2} is infinitesimal, concluding that $h(\tau^*) = 0$. Thus $\tau \leq \tau^*$ by minimality, from which lower semi-continuity follows.

We now show that $G$ is measurable by constructing measurable maps $G_n \colon \car_\Om \to \car_\Om$ such that $G_n(\rho) \to G(\rho)$ for all $\rho \in \car_{\Om}$. Indeed this immediately implies measurability of $G$ (see, e.g., \cite[Corollary 6.2.6]{bogachev}). 
To this end, define the continuous maps $\f_n \colon \R \to [0,1]$ by setting
\[
\f_n(t):= \rchi_{(-\infty,-1/n]}(t) - nt \, \rchi_{(-1/n,0)} (t) \,.
\]
Introduce $T_n \colon [0,\infty] \to C([0,1];[0,1])$ by $T_n(s)(t):=\f_n(t-s)$ for all $s \in [0,\infty], t \in [0,1]$. It is straightforward to check that $T_n$ is continuous. Thus the map
\begin{equation} \label{lem:cutoffoperator:3}
\rho  \mapsto ( (T_n \circ \tau)(\rho),\rho)
\end{equation}
from $\car_\Om$ into $C([0,1];[0,1]) \times \car_\Om$ is measurable, given that $\tau$ lower semi-continuous, and hence measurable.  Moreover 
\begin{equation} \label{lem:cutoffoperator:4}
(\f,\rho) \mapsto \f\rho
\end{equation}
from $C([0,1];[0,1]) \times \car_\Om$ into $\car_\Om$ is continuous, since by triangle inequality and \eqref{eq:char_flat} we can readily check that for all $\rho^i=h_i\delta_{\gamma_i} \in \car_\Om$, $\f_i \in C([0,1];[0,1])$, $i=1,2$, it holds
\[
\D(\f_1 \rho^1, \f_2 \rho^2) 
\leq  \D(\f_1 \rho^1, \f_2 \rho^1) + \D(\f_2 \rho^1, \f_2 \rho^2) = \nor{\f_1 - \f_2}_{\infty} \nor{h_1}_{\infty} + \nor{\f_2}_\infty \D(\rho^1,\rho^2) \,.
\]
We now define $G_n \colon \car_\Om \to \car_{\Om}$ by composing the maps at \eqref{lem:cutoffoperator:3}-\eqref{lem:cutoffoperator:4}, that is, 
\[
G_n(\rho)= (T_n \circ \tau)(\rho) \, \rho = (\gamma,h\, T_n(\tau(\rho))) \,, \quad \text{ for } \, \rho \in \car_\Om\,.
\]
In view of the above, $G_n$ is measurable for all $n \in \N$. We now claim that $G_n \to G$ pointwise in $\car_\Om$. Indeed, by \eqref{eq:char_flat}, we see that
\begin{equation} \label{lem:cutoffoperator:1}
\D(G_n(\rho),G(\rho))=\sup_{t \in [0,1]} h(t) \left| \rchi_{[0,\tau(\rho))}(t) - \f_n (t-\tau(\rho))\right|  \,, \,\,\, \text{ for all } \,\, \rho = h \delta_\gamma \in \car_\Om \,.
\end{equation}
Fix $\rho = h \delta_\gamma \in \car_\Om$.  If $\tau(\rho)=\infty$ it is immediate to check that $T_n (\tau(\rho)) \equiv 1$ in $[0,1]$, so that $\D(G_n (\rho), G(\rho))=0$ for all $n \in \N$ by \eqref{lem:cutoffoperator:1}. Similarly, if $\tau(\rho)=0$, then $T_n (\tau(\rho)) \equiv 0$ in $[0,1]$ and again $\D(G_n (\rho), G(\rho))=0$ for all $n \in \N$. Finally, assume that $0<\tau(\rho)<\infty$ and fix $\e>0$. By continuity of $h$ there exists $n_0 \in \N$ such that $\tau(\rho)-1/n_0 >0$ and
\begin{equation} \label{lem:cutoffoperator:2}
h(t)<\e  \,\, \text{ for all } \,\, t \in [\tau(\rho)-1/n_0,\tau(\rho) ] \,.
\end{equation}
For all $n \geq n_0$ we can compute
\[
\begin{gathered}
h(t) \left| \rchi_{[0,\tau(\rho))}(t) - \f_n (t-\tau(\rho))\right| = 0\,, \,\,\,\, \text{ if } \,\,\,\, t \in  [0,\tau(\rho)-1/n] \cup [\tau(\rho),1]\,, \\
h(t) \left| \rchi_{[0,\tau(\rho))}(t) - \f_n (t-\tau(\rho))\right| = h(t)|1+n(t-\tau(\rho))| \leq h(t) \,,  \,\,\,\, \text{ if } \,\,\,\, t \in [\tau(\rho)-1/n,\tau(\rho)) \,.
\end{gathered}
\]
Recalling \eqref{lem:cutoffoperator:1}-\eqref{lem:cutoffoperator:2} we then obtain $\D(G_n(\rho),G(\rho))<\e$ for $n \geq n_0$, and the proof of the measurability of $G$ is concluded.

The inclusion $G(\car_\Om) \subset \car^*_\Om$ is immediate from the definition of $\car^*_\Om$. For the inclusion $G(\Ha_{\Om}^{v,g}) \subset \Ha_{\Om}^{v,g}$, consider $(\gamma,h) \in \Ha_\Om^{v,g}$ and notice that if $\tau(\gamma,h) = \infty$, then $G(\gamma,h) = (\gamma,h)$ and the thesis is immediate. On the other hand, if $\tau(\gamma,h)  \in [0,1]$, then  for every $\f \in C_c(0,1)$ there holds
\begin{align*}
\int_0^1 h(t) & \rchi_{[0,\tau(\gamma,h))}(t) 
\dot{\varphi}(t)\, dt  = \int_0^{\tau(\gamma,h)} h(t)\dot{\varphi}(t)\, dt = -\int_0^{\tau(\gamma,h)} \dot{h}(t)\varphi(t)\, dt + h(\tau(\gamma,h)) \f(\tau(\gamma,h))\\
& = -\int_0^{\tau(\gamma,h)} h(t) g(\gamma(t),t)\varphi(t)\, dt = -\int_0^{1} h(t)  \rchi_{[0,\tau(\gamma,h))}(t) g(\gamma(t),t)\varphi(t)\, dt \,,
\end{align*}
implying that $h\rchi_{[0,\tau(\gamma,h))} \in \AC^2[0,1]$ and
$(h\rchi_{[0,\tau(\gamma,h))})'(t) = \rchi_{[0,\tau(\gamma,h))} (t) g(\gamma(t),t)h(t)$ for  a.e.  $t\in (0,1)$.
Noticing that $\sqrt{h  \rchi_{[0,\tau(\gamma,h))}} = \sqrt{h}  \rchi_{[0,\tau(\gamma,h))}$, by a similar argument we obtain that $\sqrt{h  \rchi_{[0,\tau(\gamma,h))}} \in \AC^2[0,1]$ and $\sqrt{h  \rchi_{[0,\tau(\gamma,h))}} \gamma  \in \AC^2[0,1]$. This shows that $(\gamma,  h  \rchi_{[0,\tau(\gamma,h))}) \in \Ha_\Om^{v,g}$, concluding the claimed inclusion.
We finally show that $\car_\Om^*$ is measurable. Consider the map $\mathcal{G} : \car_\Om  \rightarrow \car_\Om \times \car_\Om$ defined as 
$\mathcal{G}(\gamma,h) := ((\gamma,h), G(\gamma,h))$ , where $\car_\Om \times \car_\Om$ is equipped with the Borel $\sigma$-algebra of the product space. As $\car_\Om$ is a separable metric space (Proposition \ref{prop:complete}) and 
$G$ is measurable, we deduce that also $\mathcal{G}$ is measurable. Define the diagonal 
$D := \{((\gamma,h), (\gamma,h)) : (\gamma,h) \in \car_\Om\}$, which
 is clearly closed.
As the set  $\text{Fix}(G) := \{(\gamma,h) \in \car_\Om : G(\gamma,h) = (\gamma,h)\}$ coincides with $\car^*_\Om$, we obtain that $\car_\Om^* = \mathcal{G}^{-1}(D)$, implying that $\car_\Om^*$ is measurable.
\end{proof}

\begin{lemma}\label{lem:minimalityimpliesconcentration}
Let $(t \mapsto \rho_t) \in \pcurves$ be in $\mathcal{D}_{v,g}$ and $\sigma \in \mathcal{M}_1^+(\car_\Om)$ be concentrated on $\mathcal{H}^{v,g}_\Om$. 
Suppose that \eqref{eq:representation} and \eqref{eq:minimalityofJ} hold. 
Then $\sigma$ is concentrated on $\car_\Om^*$. 
\end{lemma}

\begin{proof}
Suppose by contradiction that $\sigma(\car_\Om \smallsetminus \car_\Om^*)>0$. Let $\tau$ and $G$ be as in Definition \ref{def:cutoffoperator}. As $G$ is measurable (Lemma \ref{lem:cutoffoperator}), we can consider the measure $\hat \sigma := G_\# \sigma \in \mathcal{M}^+(\car_\Om)$. 
By the inclusion $G(\car_{\Om}) \subset \car_\Om^*$ in Lemma \ref{lem:cutoffoperator}, we have that $\hat \sigma$ is concentrated on $\car^*_\Om$. Using that $\sigma$ is concentrated on $\Ha_\Om^{v,g}$ and the inclusion $G(\Ha_\Om^{v,g}) \subset \Ha_\Om^{v,g}$ (Lemma \ref{lem:cutoffoperator}), we also deduce that $\hat \sigma$ is concentrated on $\Ha_\Om^{v,g}$.
By Remark~\ref{rem:conv_bound} and by definition of $\hat\sigma$, we  get that $\hat\sigma$ satisfies \eqref{eq:conv_bound} with respect to $v$ and $g$. 
Therefore we can apply Theorem \ref{thm:lifting} to $\hat \sigma$ and obtain a curve $t \mapsto \hat \rho_t$ in $\pcurves$ such that \eqref{eq:representation} hold and 
$\partial_t \hat\rho_t + \div(v\hat\rho_t) = g\hat\rho_t$ in $X_\Om$. Additionally, using Remark~\ref{rem:representationL1}, \eqref{eq:representation}, and the definition of $\hat \sigma$, we obtain that
\begin{align*}
\int_0^1  \int_{\Om} |v(t,x)|^2 + |g(t,x)|^2\, d\hat \rho_t(x)\, dt  \leq  \int_0^1 \int_{\Om} |v(t,x)|^2 + |g(t,x)|^2\, d\rho_t(x)\, dt<\infty\,,
\end{align*}
implying that $(t \mapsto \hat \rho_t) \in \mathcal{D}_{v,g}$. Moreover $\hat\rho_0=\rho_0$, by \eqref{eq:representation} at time $t=0$ and definition of $\hat\sigma$.
Finally, using again \eqref{eq:representation}, we estimate
\begin{equation} \label{eq:lastmini}
\begin{aligned}
\|\hat \rho\|_{\mathcal{M}(X_\Om)} & = \int_0^1 \int_{\car_\Om} h(t)\, d\hat \sigma(\gamma,h)\, dt 
 =  \int_{\car_\Om} \int_0^{\tau(h,\gamma)} h(t)\, dt\,  d\sigma(\gamma,h) \\
& \leq  \|\rho\|_{\mathcal{M}(X_\Om)} - \int_{\car_\Om\smallsetminus \car^*_\Om} \int_{\tau(h,\gamma)}^1 h(t)\, dt\,  d\sigma(\gamma,h)\,.
\end{aligned}
\end{equation}
Thanks to the continuity of $h$ for every $(\gamma,h) \in \car_\Om$ and the definition of $\car^*_\Om$, we know that 
$\int_{\tau(h,\gamma)}^1 h(t)\, dt > 0$ for all $(\gamma,h) \in \car_\Om\smallsetminus \car^*_\Om$. 
Hence, as $\sigma(\car_\Om \smallsetminus \car_\Om^*)>0$,  from \eqref{eq:lastmini} we conclude that $\|\hat \rho\|_{\mathcal{M}(X_\Om)} <  \|\rho\|_{\mathcal{M}(X_\Om)}$ contradicting \eqref{eq:minimalityofJ}.
\end{proof}

Next we show that we can disintegrate any measure obtained by the application of Theorem \ref{thm:lifting} into a family of Borel measures parametrized by $x\in \Om$ and concentrated on the set
\begin{equation}\label{eq:eerre}
E_{x} :=  \{(\gamma, h) \in \mathcal{H}^{v,g}_\Om \cap  \car^{*}_\Om \cap \Ha^1_\Om \,  : \, \gamma(0) = x \}\,.
\end{equation}
Notice that $E_x$ is measurable for every $x \in \Om$. Indeed, by employing similar arguments to the ones in Lemma \ref{lem:evaluationsecond}, one can show that the map $\pi : \car^{*}_\Om \smallsetminus \{0\}   \rightarrow  \Om $ defined as 
$\pi(\gamma,h) := \gamma(0)$ is continuous. Therefore, as $\car^{*}_\Om \cap \Ha^1_\Om \subset \car^{*}_\Om \smallsetminus \{0\}$, we can write $E_x = \pi^{-1}(x) \cap \mathcal{H}^{v,g}_\Om \cap \Ha^1_\Om$. Thus $E_x$ is measurable, given that $\Ha^1_\Om$ is closed and $\mathcal{H}^{v,g}_\Om$ is measurable by Lemma \ref{lem:cutoffoperator}.
\begin{lemma}\label{lem:disintegration}
Let  $v \colon X_\Om \rightarrow \R^d$, $g \colon X_\Om \rightarrow \R$ be measurable. Let $\rho \in \mathcal{D}_{v,g}$   
and $\sigma \in \mathcal{M}_1^+(\car_\Om)$ be such that \eqref{eq:representation} holds.
Then there exists a Borel family of measures $\{\sigma^{x}\}_{x \in \Om} \subset \mathcal{M}^+(\car_\Om)$  such that for every $f \in L_{\sigma}^1(\car_\Om)$ we have
\begin{equation}\label{eq:dis}
\int_{\mathcal{H}^{v,g}_\Om \cap \mathcal{H}^1_\Om \cap \car_\Om^*}f(\gamma,h)\, d\sigma(h,\gamma) = \int_{\Omega} \int_{E_x}f(\gamma,h)\, d\sigma^x(\gamma,h)\, d\rho_0(x)\,,
\end{equation}
where $E_x$ is defined as in \eqref{eq:eerre}.
Moreover $\sigma^{x}$ is concentrated on $E_{x}$ for $\rho_0$-a.e.~$x \in \Om$.
\end{lemma}

\begin{proof}
 Set $\Ha := \mathcal{H}^{v,g}_\Om \cap \mathcal{H}^1_\Om \cap \car_\Om^*$ and notice that it is measurable thanks to Lemma \ref{lem:cutoffoperator}.
 Consider  the map $\pi : \car_\Om   \rightarrow  \Om$ defined by
$\pi(\gamma,h) :=\gamma(0) \,\rchi_{\Ha}( \gamma,h) + z \, \rchi_{\car_\Om \smallsetminus \Ha}( \gamma,h)$,
where $z \in \Om$ is arbitrary, but fixed.
Notice that, as $h(0) > 0$ for every $(\gamma,h) \in \Ha$, the map $\pi$  is well-defined and measurable in $\car_\Om$ using similar arguments as in Lemma \ref{lem:evaluationsecond}. Define then $\tilde \sigma := \sigma \zak \Ha$.
We aim to show that $\pi_\# \tilde \sigma \ll   \rho_0$. To this end, consider a Borel set $B \subset \Om$ such that $\rho_0(B) = 0$. Define  $\tilde B = \{(\gamma,h) \in \Ha : \gamma(0) \in B\}$ and notice that $\tilde B$ is measurable as $\tilde B = \pi^{-1}(B) \cap \Ha$. 
Then
\begin{equation*}
0 =  \rho_0(B) = \int_{\car_\Om} h(0) \rchi_B(\gamma(0)) \, d\sigma \geq \int_{\Ha} h(0) \rchi_B(\gamma(0)) \, d\sigma = \int_{\tilde B} h(0) \, d\sigma \,,
\end{equation*}
implying that $\sigma(\tilde B) = 0$, since $h(0) > 0$ for all $(\gamma,h) \in \Ha$. By direct calculation we have $(\pi_\#\tilde \sigma)(B) = \sigma(\tilde B)$, and thus $(\pi_\#\tilde \sigma)(B) =0$, concluding that $\pi_\# \tilde \sigma \ll  \rho_0$.
Hence, as $\car_\Om$ is a complete separable metric space by Proposition \ref{prop:complete},
we can apply Theorem \ref{thm:app:disintegration} to $\tilde \sigma \in \mathcal{M}^+(\car_\Om)$, and obtain a Borel family of measures $\{\sigma^{x}\}_{x \in \Om} \subset \mathcal{M}^+(\car_\Om)$ satisfying the thesis.
\end{proof}

\subsection{Proof of Theorem \ref{thm:uniqueness}} \label{sec:proof_uniqueness}
Assume that $t \mapsto \rho_t$ belongs to $\mathcal{D}_{v,g}$. Moreover suppose that $\rho_0$ is concentrated on $A\subset \Om$ and that \eqref{eq:minimalityofJ} holds. 
By Theorem~\ref{thm:lifting}, there exists $\sigma \in \mathcal{M}_1^+(\car_\Om)$ concentrated on $\mathcal{H}^{v,g}_\Om \cap \Ha_\Om^1$ that represents $\rho_t$, that is, \eqref{eq:representation} holds. 
Using Lemma \ref{lem:minimalityimpliesconcentration}, we infer that $\sigma$ is concentrated on $\Ha := \mathcal{H}^{v,g}_\Om \cap \mathcal{H}^1_\Om \cap \car_\Om^*$. 
Thanks to Lemma~\ref{lem:disintegration} we can disintegrate $\sigma$ into a Borel family $\{\sigma^{x}\}_{x \in \Om} \subset \mathcal{M}^+(\car_\Om)$ such that \eqref{eq:dis} holds, with $\sigma^x$ concentrated on $E_x$ for $\rho_0$-a.e.~$x \in \Om$. 
We claim that  assumption $(\textsc{Hyp})$  implies that
$E_x$ contains at most one point for all $x \in A$. 
Indeed, suppose that $(\gamma_1^x,h_1^x), (\gamma_2^x,h_2^x) \in E_x$. As $(\gamma_i^x,h_i^x) \in \car_\Om^* \cap \Ha_\Om^1$,  there exist $\tau_i \in \R$ such that $\{h_i > 0\} = [0,1] \cap (-\infty,\tau_i)$ and $\|h^x_i\|_{1} = 1$. Assume $\tau_1 \leq \tau_2$. As $(\gamma_i^x,h_i^x) \in \Ha^{v,g}_\Om$, we  have that $(\gamma_i^x,h_i^x)$ solves \eqref{ODE1}-\eqref{ODE2} in $[0,\tau_1)$.  
Now notice that by linearity of \eqref{ODE2} and assumption $(\textsc{Hyp})$, we have that $\gamma_1^x(t) = \gamma_2^x(t)$ and $h_1^x(t) = h_2^x(t) h_1^x(0)/h_2^x(0)$ for all $t \in [0,\tau_1)$.  
As $\|h^x_i\|_{1} = 1$, we then infer $(\gamma_1^x,h_1^x) = (\gamma_2^x,h_2^x)$ in $[0,\tau_1)$ and by the continuity of $h_i$ we also obtain that $h^x_1(\tau_1) = h^x_2(\tau_2) = 0$. By definition of $\tau_i$ we conclude that $\tau_1 = \tau_2 = \tau$, so that $(\gamma_1^x,h_1^x) = (\gamma_2^x,h_2^x)$ in $[0,\tau)$. Since $h^x_1(t) = h^x_2(t) = 0$ for all $t \geq \tau$, we conclude that $E_x$ contains at most one point. 
Thus, for $\rho_0$-a.e.~$x\in E$, $ E:=\{ x \in \Om \, \colon \, E_x \neq \emptyset\}$,  we have  $\sigma^x =c_x\, \delta_{(\gamma^x,h^x)}$, with $c_x:= \|\sigma^x\|_{\mathcal{M}(\car_\Om)}$, $(\gamma^x,h^x) \in E_x$. 
  We claim that $c_x =1/h^x(0)$. Indeed, by definition of $E_x$, we have $\gamma^x(0)=x$. Using \eqref{eq:representation}, \eqref{eq:dis}, and $\sigma(\car_\Om \smallsetminus \Ha)=0$, we then obtain  
\begin{align*}
\int_\Om \f(x)  \, d\rho_0(x)  =  \int_E c_x h^x(0)\f(x)\, d\rho_0(x) \,,
\end{align*}     
for all $\f \in C(\Om)$, showing that $c_x h^x(0) = 1$ for $\rho_0$-a.e. $x\in E$.
Again by \eqref{eq:representation} and \eqref{eq:dis} we get
\[
 \int_\Om \f(x)\, d \rho_t (x) = \int_{E} \frac{1}{h^x(0)} h^x(t)\f(\gamma^x(t))\, d\rho_0(x)\,,
 \]
for every $\f \in C(\Om)$, where we also used that $\sigma^x = \frac{1}{h^x(0)}\delta_{(\gamma^x,h^x)}$ for $\rho_0$-a.e. $x\in E$. Thus $\rho_t$ depends only on the initial data $\rho_0$, ending the proof.

\section{Extremal points of the Wasserstein-Fisher-Rao energy}\label{sec:extremalpoints}

Let $\Om \subset \R^d$ with $d \geq 1$ be the closure of a bounded domain of $\R^d$.  
Let $\alpha,\beta>0$, $\delta \in (0,\infty]$ and define $\mathscr{B}$ to be the unit ball of the functional $J_{\alpha,\beta,\delta}$ defined at \eqref{bb reg}, that is,
\[
\mathscr{B}:= \left\{ (\rho,m,\mu ) \in \mathcal{M}_\Om \, \colon \, J_{\alpha,\beta,\delta}(\rho,m,\mu) \leq 1 \right\} \,.
\]
The aim of this section is to characterize the extremal points $\ext \mathscr{B}$. Notice that $J_{\alpha,\beta,\infty}$ corresponds to the coercive version of the Benamou-Brenier energy, whose extremal points were characterized in \cite{bcfr}. Hence here we focus on the case $\delta<\infty$. After the characterization of $\ext \mathscr{B}$ is obtained, we will show how this information can be applied to the analysis of dynamic inverse problems which are regularized via the optimal transport energy $J_{\alpha,\beta,\delta}$ \cite{bf}. In particular we will obtain a sparse representation formula for regularized solutions to the dynamic problem.

Before stating the characterization theorem we remind the reader the notations $\cone_\Om,\car_\Om, \Ha_\Om$ introduced at \eqref{eq:cone}, \eqref{eq:narr}, \eqref{eq:char_smooth}. In the following $\car_\Om$ is equipped with the distance $\D$ at \eqref{distance_sup}, making it a complete metric space (Proposition \ref{prop:complete}). We now define the set of characteristics of \eqref{cont weak} with energy $J_{\alpha,\beta,\delta}=1$, which will play a role in the characterization of $\ext \mathscr{B}$.

\begin{definition}[Characteristics] \label{def:extrfr}
	Define the set $\mathcal{C}$ of all the triples $(\rho,m,\mu) \in \mathcal{M}_\Om$ of the form $\rho= h(t)\, dt \otimes \delta_{\gamma(t)}$, $m=\dot{\gamma}(t) \rho$, $\mu=\dot{h}(t)\,  dt \otimes \delta_{\gamma(t)}$ that satisfy the following properties:
\begin{itemize}
\item[i)] $t \mapsto h(t) \delta_{\gamma(t)}$ belongs to $\Ha_\Om$, 
\item[ii)] the set $\{h>0\}:=\{ t \in [0,1] \, \colon \, h(t)>0\}$ is connected,
\item[iii)] the energy satisfies $J_{\alpha,\beta,\delta} (\rho,m,\mu)=1$.
\end{itemize}
\end{definition}

The above definition is well-posed since $(\rho,m,\mu)$ belongs to $\M_\Om$ and solves the continuity equation \eqref{cont weak} in $X_\Om$ (by the converse of Proposition \ref{prop:sublevelJ} with $V=\Om$). Hence  (iii) is compatible with the definition of $J_{\alpha,\beta,\delta}$. 

\begin{remark}\label{rem:inclusion}
If $(\rho,m,\mu) \in \M_\Om$ with $\rho \in \mathcal{H}_\Om$, then an application of Proposition \ref{prop:sublevelJ} (with $V=\Om$) yields the representation
\begin{equation} \label{def:extremal:4}
 J_{\alpha,\beta,\delta} (\rho,m,\mu)=  J_{\alpha,\beta,\delta}(\gamma,h) = \int_{\{h >0\}} \frac{\beta}{2} \,  h(t)  |\dot\gamma(t)|^2 + \frac{\beta \delta^2}{2}  \,
 \frac{\dot{h}(t)^2}{h(t)} + \alpha  h(t) \, dt \,. 
\end{equation}
In particular $J_{\alpha,\beta,\delta}$ is $\D$-measurable, as a consequence of Proposition \ref{lem:compact_sublevels}. For a  measurable set $E \subset [0,1]$ we define the localized energy
\[
 J_{\alpha,\beta,\delta,E} (\rho,m,\mu) :=  \int_{E \cap\{h >0\}}   \frac{\beta}{2} \, h(t)  |\dot\gamma(t)|^2 + \frac{\beta \delta^2}{2} \,
 \frac{\dot{h}(t)^2}{h(t)} +  \alpha h(t) \, dt \,.
\]
\end{remark}

We are now ready to state the characterization theorem.

\begin{theorem} \label{thm:fr}
For parameters $\alpha,\beta,\delta>0$ we have
$
\ext \mathscr{B} = \mathcal{C} \cup \{0\}$, 	
where $0$ denotes the null triple in $\M_\Om$. 
\end{theorem}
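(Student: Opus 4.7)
I would prove the inclusions $\mathcal{C}\cup\{0\}\subseteq \ext\mathscr{B}$ and $\ext\mathscr{B}\subseteq \mathcal{C}\cup\{0\}$ separately: the forward one is a direct convexity analysis built on Proposition \ref{prop:sublevelJ}, while the reverse one combines the superposition principle of Theorem \ref{thm:lifting} with a Choquet-style rigidity argument.

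For the forward inclusion the null triple is extremal since positivity of $\rho_i$ together with the absolute continuity $m_i,\mu_i\ll\rho_i$ (forced by $B_\delta(A_i)<\infty$) rules out a nontrivial decomposition $0=\lambda A_1+(1-\lambda)A_2$. Now take $(\rho,m,\mu)\in\mathcal{C}$ with $\rho=h\,dt\otimes\delta_{\gamma(t)}$ and suppose $(\rho,m,\mu)=\lambda A_1+(1-\lambda)A_2$ with $A_i=(\rho_i,m_i,\mu_i)\in\mathscr{B}$ and $\lambda\in(0,1)$. The bound $\rho_i\le(\min\{\lambda,1-\lambda\})^{-1}\rho$ concentrates each $\rho_i$ on the graph of $\gamma$; disintegration via Lemma \ref{lem:prop cont} gives $\rho_{i,t}=h_i(t)\delta_{\gamma(t)}$, and Proposition \ref{prop:sublevelJ} (applied with $V=\Om$) yields $\rho_i\in\Ha_\Om$ together with $m_i=h_i\dot\gamma\,dt\otimes\delta_{\gamma(t)}$, $\mu_i=\dot h_i\,dt\otimes\delta_{\gamma(t)}$, and an explicit formula for $J_{\alpha,\beta,\delta}(A_i)$. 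Convexity of $J_{\alpha,\beta,\delta}$ combined with $J(\rho,m,\mu)=1$ and $J(A_i)\le 1$ forces $J(A_i)=1$ and pointwise equality of the convex combination in $t$; strict convexity of the Fisher density $(h,\dot h)\mapsto\dot h^2/h$ then yields $\dot h_1/h_1=\dot h_2/h_2$ a.e.\ on $\{h_1>0\}\cap\{h_2>0\}$. A continuity argument on the connected set $\{h>0\}$ (condition (ii) of Definition \ref{def:extrfr}) excludes strict inclusions $\{h_i>0\}\subsetneq\{h>0\}$: at the boundary of such an inclusion $h_1/h_2$ would vanish, forcing one $h_i$ to be identically zero and thus $J(A_i)=0$, against $J(A_i)=1$. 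Integrating the identity on $\{h>0\}$ gives $h_1=ch_2$ for some $c>0$, and substituting $h=(\lambda c+1-\lambda)h_2$ together with the 1-homogeneity of $J$ and the normalization $J(A_i)=J(\rho,m,\mu)=1$ pins down $c=1$, so $A_1=A_2=(\rho,m,\mu)$.

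For the reverse inclusion let $(\rho,m,\mu)\in\ext\mathscr{B}\setminus\{0\}$. The decomposition $(\rho,m,\mu)=J(\rho,m,\mu)\cdot(\rho,m,\mu)/J(\rho,m,\mu)+(1-J(\rho,m,\mu))\cdot 0$ (with $J(\rho,m,\mu)>0$ since $\rho\neq 0$ and $\alpha>0$) forces $J(\rho,m,\mu)=1$, and Theorem \ref{thm:lifting} produces $\sigma\in\M_1^+(\car_\Om)$ concentrated on $\Ha_\Om^{v,g}$ (with $v=m/\rho$, $g=\mu/\rho$) such that $(\rho,m,\mu)=\int_{\car_\Om}T(\gamma,h)\,d\sigma$ where $T(\gamma,h):=(h\,dt\otimes\delta_{\gamma(t)},\;h\dot\gamma\,dt\otimes\delta_{\gamma(t)},\;\dot h\,dt\otimes\delta_{\gamma(t)})$. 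Setting $F(\gamma,h):=J_{\alpha,\beta,\delta}(T(\gamma,h))$, the construction of $\sigma$ via the smoothed measures $\sigma^\e$ in Section \ref{sec:proof_direct} delivers $\int F\,d\sigma\le J(\rho,m,\mu)=1$, while Jensen's inequality for the convex 1-homogeneous $J$ applied to the barycentric representation provides the reverse bound, so $\int F\,d\sigma=1$. Discarding the $\sigma$-negligible set $\{F=0\}$ (on which $T=0$) and introducing the probability measure $\tau:=F\sigma$ and the map $\Phi(\gamma,h):=T(\gamma,h)/F(\gamma,h)\in\{J=1\}\subset\mathscr{B}$, one obtains $(\rho,m,\mu)=\int_{\car_\Om}\Phi\,d\tau$. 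For any measurable partition $\car_\Om=E_1\sqcup E_2$ with $\tau(E_i)>0$, convexity of $\mathscr{B}$ places each conditional barycenter $\int_{E_i}\Phi\,d\tau/\tau(E_i)$ in $\mathscr{B}$, and the resulting convex combination of them equals $(\rho,m,\mu)$; extremality then forces both barycenters to equal $(\rho,m,\mu)$, and testing against a countable dense family of continuous functionals on $\M_\Om$ upgrades this to $\Phi(\gamma,h)=(\rho,m,\mu)$ for $\tau$-a.e.\ $(\gamma,h)$. Consequently $T(\gamma,h)$ is $\sigma$-a.e.\ a positive scalar multiple of $(\rho,m,\mu)$; since $\supp \rho^{(\gamma,h)}=\supp\rho$ is then a common graph for $\sigma$-a.e.\ curves, $\sigma$ must be concentrated on a single ray $\{(\gamma_0,cH):c>0\}$ in $\Ha_\Om^{v,g}$ and $(\rho,m,\mu)=T(\gamma_0,H)$ is elementary. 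Finally, if $\{H>0\}$ split into two disjoint nonempty relatively open components $I_1,I_2$, the triples $T(\gamma_0,H\rchi_{I_j})$ would still lie in $\Ha_\Om^{v,g}$, be mutually singular in $X_\Om$, and yield $J(T(\gamma_0,H))=J(T(\gamma_0,H\rchi_{I_1}))+J(T(\gamma_0,H\rchi_{I_2}))$ by additivity of $B_\delta$ on mutually singular measures, producing a convex decomposition contradicting extremality; hence $\{H>0\}$ is connected and $(\rho,m,\mu)\in\mathcal{C}$.

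\textbf{Main obstacle.} The technically delicate step is the Choquet-style rigidity that upgrades the barycentric representation $(\rho,m,\mu)=\int\Phi\,d\tau$ and extremality of $(\rho,m,\mu)$ to the pointwise identification $\Phi=(\rho,m,\mu)$ $\tau$-a.e. This requires Borel measurability of $\Phi$ on $(\car_\Om,\D)$ (using Lemma \ref{lem:evaluationsecond} and the structure of $\Ha_\Om^{v,g}$), careful handling of the weak$^\ast$-integration of the $\M_\Om$-valued map $\Phi$, and the exchange of quantifiers from ``equal barycenters on every positive-measure partition'' to ``pointwise equality'' via a countable dense family of test functionals on $(C(X_\Om))^{d+2}$. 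Once this rigidity is in place the remaining geometric consequences, namely coincidence of the curve trajectories and connectedness of $\{H>0\}$, follow by comparatively short arguments, so the heart of the proof lies in combining $\int F\,d\sigma=J(\rho,m,\mu)$ with the extremality hypothesis.
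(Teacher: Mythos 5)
Your two inclusions are correct in outline, but they sit differently relative to the paper. The forward inclusion $\mathcal{C}\cup\{0\}\subset\ext\mathscr{B}$ is essentially the paper's argument: same disintegration onto the common curve $\gamma$, same vanishing of the defect term $(\dot h_1h_2-h_1\dot h_2)^2$ (note the integrand $(h,\dot h)\mapsto \dot h^2/h$ is one-homogeneous, hence not strictly convex; what you actually use is that equality forces proportionality, i.e.\ $\dot h_1/h_1=\dot h_2/h_2$, which is fine). The one step you compress too much is ruling out $\{h_i>0\}\subsetneq\{h>0\}$: the ratio $h_1/h_2$ is only locally constant on $E=\{h_1>0\}\cap\{h_2>0\}$, whose components may carry different constants, so "the ratio vanishes at the boundary" does not by itself force some $h_i\equiv 0$. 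You need either the paper's device (the continuous function $\min(h_1/h_2,C)$, set to $C$ on $\{h_2=0\}$, has an at most countable yet connected image, hence is constant on $\{h>0\}$), or to push your boundary argument to show that $E$, $\{h_1>0\}\cap\{h_2=0\}$ and $\{h_1=0\}\cap\{h_2>0\}$ are all relatively open in the connected set $\{h>0\}$, after which connectedness and $J(A_i)=1$ finish the job.

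The reverse inclusion is where you genuinely diverge. The paper never integrates $\M_\Om$-valued maps: it fixes a time $\hat t$ at which $\supp\rho_{\hat t}$ fails to be a singleton, splits the representing measure $\sigma$ according to the position of $\gamma(\hat t)$, rebuilds two competing solutions through the converse implication of Theorem \ref{thm:lifting}, normalizes them by their energies and contradicts extremality — all with scalar test functions; connectedness of $\{h>0\}$ is then a separate splitting argument identical to yours. Your Choquet-style barycentric rigidity is conceptually cleaner and more general (it identifies extremal points directly with the "pure" curves charged by $\sigma$), but it buys this at the cost of machinery the paper does not develop and that you only flag: weak* (Gelfand) integration and Borel measurability of $\Phi$ on $(\car_\Om,\D)$, a Jensen-type inequality for the sublinear weak*-lower semi-continuous functional $J_{\alpha,\beta,\delta}$ (via its dual representation), and the fact that conditional barycenters of measures supported in $\mathscr{B}$ remain in $\mathscr{B}$. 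These are standard but must be supplied. Also note that you need neither Jensen nor the specific $\sigma^\e$-construction (your "$\le$" direction only makes sense for the particular $\sigma$ built in the proof, not for the one given by the statement) to obtain $\int J(\gamma,h)\,d\sigma=1$: since $\sigma$ is concentrated on $\Ha_\Om^{v,g}$, the ODEs give $J(\gamma,h)=\int_0^1 h(t)\Psi(t,\gamma(t))\,dt$ with $\Psi=\tfrac{\beta}{2}|v|^2+\tfrac{\beta\delta^2}{2}|g|^2+\alpha$, and Lemma \ref{lem:representationL1} converts this into exactly $J(\rho,m,\mu)=1$ — the identity the paper uses — which streamlines your definition of the probability measure $\tau=F\sigma$.
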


The proof of Theorem \ref{thm:fr} will be carried out in the next section, while in Section \ref{sec:ext_sparse} we will detail the application of Theorem \ref{thm:fr} to dynamic inverse problems.

\subsection{Proof of Theorem \ref{thm:fr}}

In order to simplify notations, we will denote $J:=J_{\alpha,\beta,\delta}$ and $J_E:=J_{\alpha,\beta,\delta,E}$ for any $E\subset [0,1]$  measurable.  

\medskip

\textit{Step 1.}  $\mathcal{C} \cup \{0\} \subset \ext\mathscr{B}$:
Assume first that $(\rho,m,\mu)=0$, and that there exists a decomposition
\begin{equation} \label{eq:zero_decomposition}
(0,0,0) = \lambda (\rho^1,m^1, \mu^1) + (1- \lambda) (\rho^2,m^2, \mu^2)
\end{equation}
with $(\rho^j,m^j,\mu^j) \in \Bb$ and $\lambda \in (0,1)$. In particular by Lemma \ref{lem:prop B} point (i) we have $\rho^j \geq 0$ and $m^j,\mu^j \ll \rho^j$. Therefore \eqref{eq:zero_decomposition} immediately implies that $(\rho^j,m^j,\mu^j)=0$, showing that $0 \in \ext \Bb$. 

Assume now that $(\rho,m,\mu) \in \mathcal{C}$, according to Definition \ref{def:extrfr}. 
In particular the set $\{h>0\}$ is non-empty, since $J(\rho,m,\mu)=1$. 
Assume that $(\rho^1,m^1,\mu^1), (\rho^2,m^2,\mu^2) \in \mathscr{B}$ are such that 
\begin{equation} 
\label{thm:fr:1}
	(\rho,m,\mu)=\lambda (\rho^1,m^1,\mu^1) + (1-\lambda) (\rho^2,m^2,\mu^2) \,,
\end{equation}
for some $\lambda \in (0,1)$. We need to show that $(\rho,m,\mu)=(\rho^j,m^j,\mu^j)$. By \eqref{thm:fr:1}, convexity of $J$ (see Lemma \ref{lem:prop J}), and the fact that $J(\rho^j,m^j,\mu^j)\leq 1$, $J(\rho,m,\mu)=1$, we have that $J(\rho^j,m^j,\mu^j) =1$. Thus, by Lemmas \ref{lem:prop cont}, \ref{lem:prop B} we infer $\rho^j=dt \otimes \rho_t^j$ with $t \mapsto \rho^j_t$ in $\pcurves$. Set $h_j(t):=\rho_t^j (\Om)$ and notice that $h_j$ is continuous by narrow continuity of $\rho^j$. From the decomposition \eqref{thm:fr:1} and the uniqueness of the disintegration, we thus obtain 
$\rho^j_t = h_j(t) \delta_{\gamma(t)} \in \car_\Om$,
and in particular
\begin{equation} \label{thm:fr:9}
h(t)=\lambda h_1(t) + (1-\lambda)h_2(t) \,\, \text{ for every } \,\,
t \in [0,1] \,.
\end{equation}
We will now show that there exists $c>0$ such that
\begin{equation} \label{thm:fr:17}
h_2(t) = c \, h_1(t) \,\, \text{ for all } \,\, t \in \{h>0\}\,.
\end{equation}

We start by defining the sets
$E:= \{ h_1>0\} \cap \{h_2>0\}$, $   
Z_1:= \{ h_1>0\} \cap \{ h_2=0\}$ and $Z_2:= \{ h_1=0\} \cap \{ h_2>0\}$.
These sets are pairwise disjoint, and by \eqref{thm:fr:9} we have 
$\{h>0\} = E \cup Z_1 \cup Z_2$, 
where we recall that $\{h>0\} \neq \emptyset$ is connected by assumption.
We claim that $E \neq \emptyset$. Indeed, assume by contradiction that $E = \emptyset$, so that in particular $Z_1 \cup Z_2 = \{h > 0\}$. Notice that $Z_1, Z_2$ are relatively closed in $\{h>0\}$ since they can be written as $Z_1=\{h>0\} \cap \{h_2=0\}$, $Z_2 = \{h>0\} \cap \{h_1=0\}$, due to \eqref{thm:fr:9}. As $\{h>0\}$ is connected, we deduce that either $Z_1 =\emptyset$ or $Z_2=\emptyset$. If $Z_1=\emptyset$, then we would have $h_1 \equiv 0$, which in turn would imply $\rho^1=0$. Hence by Lemma \ref{lem:prop B} point (i) we would obtain $J(\rho^1,m^1,\mu^1)=0$,  contradicting $J(\rho^1,m^1,\mu^1)=1$. Similarly $Z_2 =\emptyset$ leads to the contradiction $J(\rho^2,m^2,\mu^2)=0$. We therefore conclude $E \neq \emptyset$. %

\medskip
\noindent \textit{Claim:} $h_1/h_2$ is constant in each connected component of $E$.

\smallskip
\noindent \textit{Proof of Claim:}  %
Since $J(\rho^j,m^j,\mu^j) < \infty$, by Proposition \ref{prop:sublevelJ}, we have that $\rho^j \in \mathcal{H}_\Om$ and there exist $v^j\colon X_\Om \to \R^d$, $g^j \colon X_\Om \to \R$ measurable such that $m^j=v^j\rho^j$, $\mu^j = g^j\rho^j$ and
\begin{gather} \label{thm:fr:6}
\dot{h}_j (t) = g^j (t,\gamma(t)) h_j(t) \,\, \text{ for a.e. } \,\, t \in (0,1) \,, \\
\label{thm:fr:3}
\dot\gamma(t) = v^j(t,\gamma(t)) \, \text{ a.e.~in } \, \{h_j>0\}\,. 
\end{gather}
Moreover $J(\rho^j,m^j,\mu^j)=J(h_j,\gamma)$ can be computed via \eqref{def:extremal:4}.  %
By direct calculation, and using \eqref{thm:fr:1} and \eqref{def:extremal:4} we have %
\[
\begin{aligned}
& J_E(\rho,m,\mu)=  J_E(h,\gamma)= J_E(\lambda h_1  +  (1-\lambda)h_2,\gamma)  \\
&  =  \int_E ( \lambda h_1 + (1-\lambda )h_2) \left( \frac{\beta}{2} \, |\dot\gamma|^2 + \alpha  \right) \, dt + \frac{\beta \delta^2}{2}\int_E \frac{ (  \lambda \dot{h}_1 + (1-\lambda) \dot{h}_2)^2  }{\lambda h_1 + (1-\lambda) h_2} \, dt \\ 
& = \lambda J_E(h_1,\gamma) + (1-\lambda) J_E(h_2,\gamma) + \frac{\beta\delta^2}{2}\int_E \frac{ (  \lambda \dot{h}_1 + (1-\lambda) \dot{h}_2)^2  }{\lambda h_1 + (1-\lambda) h_2} - \lambda \frac{\dot{h}_1^2}{h_1} - (1-\lambda)\frac{\dot{h}_2^2}{h_2} \, dt \,,  \\
\end{aligned}
\]
so that
\begin{equation} \label{thm:fr:12}
\begin{aligned} 
J_E(\rho,m,\mu)  = \lambda J_E(\rho^1,m^1,\mu^1)  + & (1-\lambda) J_E(\rho^2,m^2,\mu^2) \\
& - \frac{\beta \delta^2}{2} \lambda (1-\lambda)   \int_E \frac{ (\dot{h}_1 h_2 - h_1\dot{h}_2)^2 }{(\lambda h_1 + (1-\lambda)h_2) h_1 h_2} \, dt \,.
\end{aligned}
\end{equation}
By proceeding as above, one can check that 
\begin{equation} \label{thm:fr:13}
J_{Z_1}(\rho,m,\mu)=\lambda J_{Z_1} (\rho^1,m^1,\mu^1) \,\,, \quad  J_{Z_2}(\rho,m,\mu)=(1-\lambda) J_{Z_2} (\rho^2,m^2,\mu^2) \,,
\end{equation}
where we used  \eqref{thm:fr:1}, \eqref{thm:fr:9}, definition of $Z_j$ and \cite[Theorem 4.4]{evansgariepy}. 
Moreover by definition
\begin{gather} \label{thm:fr:14}
J(\rho^1,m^1,\mu^1) = J_{Z_1} (\rho^1,m^1,\mu^1) + J_{E} (\rho^1,m^1,\mu^1) \,,\\
J(\rho^2,m^2,\mu^2) = J_{Z_2} (\rho^2,m^2,\mu^2) + J_{E} (\rho^2,m^2,\mu^2) \,. \label{thm:fr:15}
\end{gather}
By combining \eqref{thm:fr:12}-\eqref{thm:fr:15}, we obtain
\[
\begin{aligned}
J(\rho,m,\mu) = \lambda  J (\rho^1,m^1,\mu^1) + & (1-\lambda)  J (\rho^2,m^2,\mu^2)  \\
& - \frac{\beta\delta^2}{2} \lambda (1-\lambda)   \int_E \frac{ (\dot{h}_1 h_2 - h_1\dot{h}_2)^2 }{(\lambda h_1 + (1-\lambda)h_2) h_1 h_2} \, dt \,.
\end{aligned}
\]
Now we can make use of the fact that $J(\rho,m,\mu)=J(\rho^j,m^j,\mu^j)=1$ to infer $\dot{h}_1 h_2 = h_1\dot{h}_2$ a.e.~in $E$. In particular  $(h_1/h_2)'=0$ a.e.~in $E$, and hence the claim follows.

\medskip

We are now ready to show \eqref{thm:fr:17}. For an arbitrary $C>0$ and $t \in \{h>0\}$ define the map
\[ f(t):=
	\min \left(\displaystyle \frac{h_1(t)}{h_2(t)} , C \right) \rchi_{\{h_2>0\}} + C\, \rchi_{\{h_2=0\}} \,.
\]
Notice that $f$ is continuous and, since $E \neq \emptyset$, $f$ is not identically zero. Moreover, as $(h_1/h_2)'=0$ a.e.~in $E$,  the image $f(\{h>0\})$ is at most countable. Assume by contradiction that $Z_2 \neq \emptyset$, and notice that $f$ vanishes on $Z_2$. Therefore $f$ assumes at least two different values on $\{h>0\}$, which is a contradiction, as $f(\{h>0\})$ is connected and, consequently, uncountable. Hence $Z_2 = \emptyset$ and $\{h>0\}=E \cup Z_1$. By interchanging the roles of $h_1$ and $h_2$, we can repeat the same argument and conclude that $Z_1 = \emptyset$, so that $E=\{h>0\}$. As $\{h>0\}$ is connected, we thus deduce \eqref{thm:fr:17} directly from the fact that $(h_1/h_2)'=0$ a.e.~in $E$.

\medskip

We are now ready to conclude. Indeed, note that, as $\rho^j_t=h_j(t)\delta_{\gamma(t)}$, condition \eqref{thm:fr:17} implies that 
$\rho^2=c  \rho^1$ and $\{h_j>0\}=\{h>0\}$. In particular \eqref{thm:fr:3} yields $v^j(t,\gamma(t))=\dot \gamma(t)$  a.e.~in $\{h>0\}$, showing that $m^2=c \, m^1$. Finally from \eqref{thm:fr:6} we infer 
$g^1(t,\gamma(t))=g^2(t,\gamma(t))$ a.e.~in $(0,1)$, from which we conclude $\mu^2 = c \, \mu^1$. In total we have 
$(\rho^2,m^2,\mu^2)=c\, (\rho^1,m^1,\mu^1)$, and by $J(\rho^j,m^j,\mu^j)=1$ and one-homogeneity of $J$ we conclude that $c=1$. Therefore \eqref{thm:fr:1} yields extremality of $(\rho,m,\mu)$. 

\medskip

\textit{Step 2.}  $\ext\mathscr{B} \subset \mathcal{C} \cup \{0\}$:
Let $(\rho,m,\mu) \in \ext\mathscr{B}$. We can assume that $(\rho,m,\mu) \neq 0$, so that $J(\rho,m,\mu)>0$. By extremality of $(\rho,m,\mu)$, convexity and 1-homogeneity of $J$, we conclude that $J(\rho,m,\mu)=1$. In particular by Lemma \ref{lem:prop B} we obtain $\rho \geq 0$ and $m=v\rho$, $\mu=g \rho$ for some measurable maps $v \colon X_\Om \to \R^d$, $g \colon X_\Om \to \R$ satisfying
\begin{equation} \label{eq:equal1}
J(\rho,m,\mu) =  \int_0^1 \int_\Om \left( \frac{\beta}{2} |v(t,x)|^2 +  \frac{\beta \delta^2}{2} |g(t,x)|^2 + \alpha \right) \, d\rho_t(x) \, dt = 1 \,. 
\end{equation}
By definition of $J$, we then have that $\de_t \rho_t + \div (v\rho_t )=g\rho_t$ in $X_\Om$. 
Thanks to Lemma~\ref{lem:prop cont} we also have $\rho=dt \otimes \rho_t$ with $t \mapsto \rho_t$ in $\pcurves$. Set $h:=\rho_t(\Om)$, and recall that $h$ is continuous. We first prove the following claim.

\smallskip
\noindent \textit{Claim:} $\supp \rho_t$ is a singleton for every $t \in \{h>0\}$.

\smallskip
\noindent \textit{Proof of Claim:} 
Assume by contradiction that there exists $\hat t \in \{h>0\}$ such that $\supp \rho_{\hat t}$ is not a singleton. Then there exist disjoint Borel sets $E_1, E_2 \subset \Om$ such that $E_1 \cup E_2 =\Om$ and $\rho_{\hat t}(E_i) > 0$ for $i=1,2$. 
Invoking Theorem \ref{thm:lifting}, there exists a measure $\sigma \in \M^+_1(\mathscr{S}_\Om)$ concentrated on $\Ha^{v,g}_\Om$ which represents $\rho_t$, that is, \eqref{eq:representation} holds. 
Define the sets
\begin{align*}
A_i := \{(\gamma,h) \in \mathscr{S}_\Om : \gamma(\hat t) \in E_i,\, h(\hat t) > 0\} \,, \,\,\,\,\,\,
Z:=\{ (\gamma,h) \in \mathscr{S}_\Om :h(\hat t) = 0 \}\,,
\end{align*}
and notice that $A_1,A_2,Z$ are pairwise disjoint and $\car_{\Om}=A_1\cup A_2 \cup Z$. Also $Z$ is $\D$-measurable, being $\D$-closed, as it is readily seen by \eqref{eq:char_flat}. We claim that also $A_i$ is $\D$-measurable. To this end define the maps $e_t \colon \car_{\Om} \to \cone_\Om$ with $e_t(\rho):=\rho_t$ and $\pi \colon \cone_{\Om} \to \R^d$ where 
$
\pi( \gamma,h):=\gamma \, \rchi_{\cone_\Om \smallsetminus \{0\}} (\gamma,h) + p \, \rchi_{\{0\}} (\gamma,h)$,
with $p \in \R^d \smallsetminus \Om$ arbitrary but fixed. Notice that by construction $e_t$ is continuous from $(\car_\Om,\D)$ into $(\cone_\Om,\D_F)$. Moreover $\pi$ is measurable since the map $(\gamma,h) \mapsto \gamma$ is $\D_F$-continuous in $\cone_\Om \smallsetminus \{0\}$. Since $A_i=( \pi \circ e_{\hat{t}} )^{-1} (E_i)$, we have that $A_i$ is measurable. By applying \eqref{eq:representation}, we get
\begin{align}\label{eq:positivityweights1}
0 < \rho_{\hat t}( E_i) = \int_{\car_\Om}  h(\hat t) \rchi_{E_i} (\gamma(\hat t))\, d\sigma(\gamma,h) = \int_{A_i} h(\hat t)\, d\sigma(\gamma,h)\,,
\end{align}
which implies $\sigma(A_i) > 0$. Hence setting $\Sigma_1:=A_1$, $\Sigma_2:=A_2 \cup Z$ we obtain a measurable partition of $\car_\Om$ with $\sigma (\Sigma_i)>0$. Notice now that the map $\Psi(t,x):= \beta  |v(t,x)|^2/2 + \beta \delta^2 |g(t,x)|^2/2+\alpha$ belongs to $L^1_{\rho_t}(\Om)$ for a.e.~$t \in (0,1)$, thanks to \eqref{eq:equal1}. Moreover $J$ is non-negative and \mbox{$\D$-measurable} by Remark~\ref{rem:inclusion}. Since $\sigma$ is concentrated on $\Ha_\Om^{v,g}$, we can apply Remark~\ref{rem:representationL1} to $\Psi$ and obtain
\begin{equation} \label{eq:equal1bis}
\begin{aligned}
\int_{\car_\Om} J(\gamma,h) \, d\sigma(\gamma,h)  = \int_{\car_\Om} \int_0^1 h(t)\Psi(t,\gamma(t)) \, dt \, d\sigma(\gamma,h) 
 =  \int_0^1 \int_\Om \Psi(t,x) \, d\rho_t(x) \, dt = 1\,,
\end{aligned}
\end{equation}
where in the last equality we again used \eqref{eq:equal1}. 
Define the coefficients 
$\lambda_i := \int_{\Sigma_i} J (\gamma,h)\, d\sigma(\gamma,h)$. 
From  \eqref{eq:equal1bis} we infer $0\leq \lambda_1,\lambda_2\leq 1$ and $\lambda_1+\lambda_2 =  1$. We claim that $\lambda_i >0$. Indeed, the map $f_i(t):=\int_{\Sigma_i} h(t) \, d\sigma(\gamma,h)$ for $t \in [0,1]$ is continuous by dominated convergence and the fact that $\int_{\car_\Om} \nor{h}_{\infty} \, d\sigma(\gamma,h)  < \infty$, as $\sigma \in \M^+_1(\car_\Om)$.  Notice that by construction $f_i(\hat{t})>0$. Therefore by definition of $J$ and continuity of $f_i$ we have
\[
\lambda_i = \int_{\Sigma_i}J (\gamma,h)\, d\sigma(\gamma,h) \geq \int_{\Sigma_i}\int_0^1 h(t)  \, dt \, d\sigma(\gamma,h) = \int_0^1 f_i(t)\, dt >0 \,,
\]   
as claimed. The measure $\sigma \zak \Sigma_i$ satisfies the hypothesis of the converse in Theorem \ref{thm:lifting}, given that \eqref{eq:equal1bis} holds and $\sigma$ is concentrated on $\Ha_\Om^{v,g}$. Hence, the curve  $t \mapsto \rho_t^i$ defined by  
\begin{align}\label{eq:representationrestriction}
\int_\Om \f(x) \, d\rho^i_{t}(x) :=  \int_{\Sigma_i}  h(t) \f(\gamma(t)) \, d\sigma(\gamma,h)  \,, \,\,\, \text{ for all } \,\,\, \f \in C(\Om)  
\end{align}
belongs to $\pcurves$ and solves the continuity equation with $v$ and $g$. We can now define $(\rho^i,m^i,\mu^i) \in \M$ by setting 
$\rho^i:= dt \otimes \rho_t^i$, $m^i:= v \rho^i$, $\mu^i:=g  \rho^i$. 
Note that by \eqref{eq:representation} and \eqref{eq:representationrestriction} we have that $\rho_t^i \leq \rho_t$ for every $t\in [0,1]$. Hence 
\[
\int_0^1 \int_\Om \left( \frac{\beta}{2} |v(t,x)|^2 + \frac{\beta \delta^2}{2} |g(t,x)|^2 +\alpha \right) \,  d\rho^i_t (x)  \, dt  \leq 1 \,,
\]
by \eqref{eq:equal1}. Given that the above holds, by repeating the same arguments used to prove \eqref{eq:equal1bis}, but applied to $\rho_t^i$ and $\sigma \zak \Sigma_i$, we have that
$J(\rho^i,m^i,\mu^i) = \lambda_i$. 
Consider the decomposition
\begin{equation} \label{eq:dec_ext}
(\rho,m,\mu) = \lambda_1 \, \frac{1}{\lambda_1} (\rho^1,m^1,\mu^1)+ 
\lambda_2 \, \frac{1}{\lambda_2} (\rho^2,m^2,\mu^2)\,,
\end{equation}
and notice that $\lambda_i^{-1}(\rho^i,m^i,\mu^i) \in \mathscr{B}$ thanks to the condition $J(\rho^i,m^i,\mu^i) = \lambda_i$ and to the \mbox{one-homogeneity} of $J$. We assert that
\begin{equation} \label{eq:dec_ext2}
 \frac{1}{\lambda_1} (\rho^1,m^1,\mu^1) \neq  
\lambda_2 \, \frac{1}{\lambda_2} (\rho^2,m^2,\mu^2) \,.
\end{equation}
Indeed we have that $\lambda_1^{-1}\rho^1 \neq \lambda_2^{-1}\rho^2$: If they were equal then by narrow continuity we would have $\lambda_1^{-1}\rho_{\hat t}^1 = \lambda_2^{-1}\rho_{\hat t}^2$. However by \eqref{eq:positivityweights1} it is immediate to check that $\rho^1_{\hat t} (E_1) = \rho_{\hat t}(E_1)>0$ and $\rho^2_{\hat t} (E_1)=0$, yielding a contradiction. Thus \eqref{eq:dec_ext2} holds and \eqref{eq:dec_ext} gives a non-trivial convex decomposition of $(\rho,m,\mu)$, contradicting extremality. This proves the claim.

\smallskip

In particular, we have shown that $\rho_t = h(t) \delta_{\gamma(t)}$ for some $\gamma \colon [0,1] \to \Om$, $h\geq 0$. Thus $t \mapsto \rho_t$ belongs to $\car_\Om$, being narrowly continuous. Hence $\gamma \in C(\{h>0\};\R^d)$ thanks to Lemma~\ref{lem:narrow_cone}.
Moreover, as a consequence of \eqref{eq:equal1} and Proposition \ref{prop:sublevelJ}, we have that $t \mapsto \rho_t$ belongs to 
$\mathcal{H}_\Om$, $m=\dot{\gamma} \rho$, $\mu=\dot{h}(t) \, dt \otimes \delta_{\gamma(t)}$ and
\begin{equation}\label{eq:ext_bb}
J(\rho,m,\mu) =  \int_{\{h >0\}}  \frac{\beta}{2} h(t)  |\dot\gamma(t)|^2 + \frac{\beta \delta^2}{2} \,
 \frac{\dot{h}(t)^2}{h(t)} +  \alpha h(t) \,  dt =1	\,.
\end{equation}

In order to prove that $(\rho,m,\mu) \in \mathcal{C}$ we are left to show that the set $\{h>0\}$ is connected. To this end, assume by contradiction that $\{h>0\} = E_1 \cup E_2$ with $E_1$, $E_2$ relatively open, non-empty and disjoint. For $t \in [0,1]$ set
$\rho^i_t := h(t)\rchi_{E_i}(t) \delta_{\gamma(t)}$.
Note that as $\{h>0\}$ is relatively open we have that $\partial^{\{h>0\}} E_i = \partial^{[0,1]}E_i \cap \{h>0\}$ where we denote by $\partial^A$ the relative boundary with respect to the set $A$. Hence as $\partial^{\{h>0\}} E_i = \emptyset$ we deduce that $h(t) = 0$ for every $t\in \partial^{[0,1]}E_i$. In particular the map $t \mapsto h(t)\rchi_{E_i}(t)$ is continuous in $[0,1]$. Moreover $\gamma \in C(\{h\rchi_{E_i}>0\};\R^d)$, hence Lemma \ref{lem:narrow_cone} ensures that the curve $t\mapsto \rho_t^i$ belongs to $\car_{\Om}$. We claim that $t\mapsto \rho_t^i$ belongs to $\Ha_\Om$. In order to show this, we make use of the information $(t \mapsto \rho_t) \in \Ha_\Om$. Notice that the set $E_i$ is relatively open in $[0,1]$, given that $\{h>0\}$ is open. Thus $E_i  = \bigcup_{n=1}^\infty I_n$, where $\{I_n\}_n$ are pairwise disjoint intervals in $[0,1]$. By dominated convergence 
\[
\int_0^1 h(t)\rchi_{E_i}(t) \dot{\f}(t) \, dt = \sum_{n=1}^\infty \int_{I_n} h(t)  \dot{\f}(t) \, dt = - \sum_{n=1}^\infty\int_{I_n} \dot{h}(t) \f(t) \, dt = \int_0^1 \dot{h}(t)\rchi_{E_i}(t) \f(t)  \, dt 
\]
for every $\f \in C^1_c(0,1)$, where we used that $h=0$ on $\de^{[0,1]} I_n$, given that $\de^{[0,1]} I_n \subset \de^{[0,1]} E_i$. Since $h \in \AC^2[0,1]$, we infer that $h\rchi_{E_i} \in \AC^2[0,1]$, with derivative $\dot{h}\rchi_{E_i}$. Noticing that $\sqrt{h\rchi_{E_i}} = \sqrt{h}\rchi_{E_i}$ by similar arguments we also deduce that $\sqrt{h} \in \AC^2[0,1]$ and $\sqrt{h} \rchi_{E_i}  \gamma \in \AC^2([0,1];\R^d)$, thus concluding $(t \mapsto \rho_t^i) \in \Ha_\Om$. Set
\[
\rho^i:= \rchi_{E_i}(t)h(t) \,  dt \otimes \delta_{\gamma(t)} \,, \,\,\, m^i:= \dot{\gamma}(t) \rho^i \,, \,\,\, \mu^i  =  \rchi_{E_i}(t)\dot{h}(t) dt \otimes \delta_{\gamma(t)}\,.
\]
Thanks to Proposition \ref{prop:sublevelJ} we have that $(\rho^i,m^i,\mu^i)$ belongs to $\M_\Om$ and
\[
J(\rho^i,m^i,\mu^i) =  \int_{E_i}  \frac{\beta}{2} h(t)  |\dot\gamma(t)|^2 + \frac{\beta \delta^2}{2} \,
 \frac{\dot{h}(t)^2}{h(t)} +  \alpha h(t) \,  dt < \infty \,.
\]
Set $\lambda_i := J(\rho^i,m^i,\mu^i)$ and notice that $0 < \lambda_i < 1 $, $\lambda_1 + \lambda_2=1$ thanks to \eqref{eq:ext_bb} and definition of $E_i$. By construction we have $\rchi_{E_1}+\rchi_{E_2}=1$ in $\{h>0\}$. By recalling that $\dot h=0$ a.e.~in $\{h>0\}$, we have that a decomposition of the form \eqref{eq:dec_ext} holds. 
As $\lambda^{-1}_1 (\rho^1,m^1,\mu^1) \neq \lambda^{-1}_2 (\rho^2,m^2,\mu^2)$ and $\lambda^{-1}_i (\rho^i,m^i,\mu^i) \in \mathscr{B}$, this contradicts the extremality of $(\rho,m,\mu)$. Thus we conclude that the set $\{h>0\}$ must be connected, ending the proof.

\subsection{Sparsity for dynamic inverse problems with optimal transport regularization} \label{sec:ext_sparse}

In this section we analyze the problem of reconstructing a family of time-dependent Radon measures given a finite number of observations. More precisely, let $H$ be a finite dimensional Hilbert space and $K: \curves \rightarrow H$ be a linear operator which is continuous in the following sense: 
given a sequence $\{(t \mapsto \rho^n_t)\}_n$ in $\curves$, we require that
\begin{equation}\label{eq:topologycweak}
\rho_t^n  \to \rho_t \quad \text{ narrowly in } \,\, \M(\Om)  \,\,\text{ for all }\,\, t\in [0,1] \,\, \text{ implies } \,\, K\rho^n \rightarrow K\rho \ \text{ in } H\,.
\end{equation}
For a given datum $y \in H$, we aim at finding a solution $\rho \in \curves$ to the ill-posed inverse problem
\begin{equation} \label{appl:inverse}
K \rho = y \,. 
\end{equation}
We regularize \eqref{appl:inverse} via the Hellinger-Kantorovich-type energy $J_{\alpha,\beta,\delta}$ defined at \eqref{bb reg}, following the approach in \cite{bf}. To this end, introduce the space
\[
\widetilde{\mathcal{M}}_\Om:= \curves \times \mathcal{M}(X_\Om;\R^d)\times \mathcal{M}(X_\Om;\R)\,,
\]  
and define the Tikhonov functional $G:\widetilde{\mathcal{M}}_\Om \to \R \cup \{\infty\}$ by
\begin{equation}\label{eq:inversefunctional}
G(\rho,m,\mu) :=F(K\rho) + J_{\alpha,\beta,\delta}(\rho,m,\mu) \,, 
\end{equation}
where $F: H \to \R$ is a fidelity functional assumed to be convex, lower semi-continuous and bounded from below. %
We then replace \eqref{appl:inverse} by
\begin{equation}\label{eq:inverseproblem}
\min_{(\rho,m,\mu) \in \widetilde{\mathcal{M}}_\Om } \ G(\rho,m,\mu) \, .
\end{equation}
Note that $G$ is proper, since $J_{\alpha,\beta,\delta}(0,0,0)=0$. 
Moreover under the assumptions on $K$ and $F$, problem \eqref{eq:inverseproblem} admits a solution: This is indeed an immediate consequence of the direct method and of Lemma \ref{lem:prop J}.

It is well-known that the finite-dimensionality of the data space $H$ promotes sparsity in the reconstruction of solutions to \eqref{appl:inverse}, in the sense that there exists a minimizer to \eqref{eq:inverseproblem} which is finite linear combination of extremal points of the ball of the regularizer. This observation was recently made rigorous in the works \cite{chambolle,bc} (see also \cite{unser2,unsersplines}). Since in Theorem \ref{thm:fr} we characterized the extremal points of the ball of $J_{\alpha,\beta,\delta}$, we can specialize the representation results in \cite{chambolle,bc} to our setting, and obtain the following statement for sparse minimizers to \eqref{eq:inverseproblem}.

\begin{theorem}\label{thm:sparserepresentation}
There exists a solution $(\hat \rho, \hat m,\hat \mu) \in \widetilde{\M}_\Om$ to \eqref{eq:inverseproblem} which is of the form
\begin{equation}
(\hat \rho, \hat m,\hat \mu) = \sum_{i=1}^p c_i \, (\rho^i,m^i,\mu^i)\,,
\end{equation}
where $p\leq \dim(H)$, $c_i >0$, $\sum_{i=1}^p c_i = J_{\alpha,\beta,\delta}(\hat \rho,\hat m,\hat \mu)$ and  $(\rho^i,m^i,\mu^i) \in  \mathcal{C}$, with $\mathcal{C}$ is as in Definition \ref{def:extrfr}. %
\end{theorem}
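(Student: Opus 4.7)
The plan is to combine the existence of a minimizer for $G$, which follows from the direct method as sketched in the text, with the abstract sparse representation results of \cite{chambolle,bc} for Tikhonov problems with a $1$-homogeneous regularizer, and then to identify the resulting extremal points via Theorem~\ref{thm:fr}. First I would phrase the problem in a functional-analytic setting in which \cite{bc} directly applies: the regularizer $J_{\alpha,\beta,\delta}$, viewed on $\M_\Om$, is proper, convex, weak*-lower semi-continuous and positively $1$-homogeneous, its unit ball $\mathscr{B}$ is weak*-compact by Lemma~\ref{lem:prop J}, and hence by Krein--Milman it coincides with the weak*-closed convex hull of $\ext \mathscr{B}$.

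Next I would verify that the forward operator $(\rho,m,\mu) \mapsto K\rho$ meets the continuity assumption required by the representer theorem. Here is where some care is needed, since $K$ is defined on $\curves$ rather than on $\M(X_\Om)$. The idea is that whenever $(\rho^n,m^n,\mu^n) \in \mathscr{B}$ converge weakly* to $(\rho,m,\mu) \in \M_\Om$, an Ascoli--Arzel\`a argument analogous to the one used in Proposition~\ref{lem:compact_sublevels} (using the energy bound to obtain an equicontinuity estimate for $t \mapsto \rho_t^n$ of the type \eqref{eq:holderinsublevel2}, in combination with Lemma~\ref{lem:prop cont}) shows that, up to subsequences, $\rho_t^n \to \rho_t$ narrowly in $\M(\Om)$ for every $t \in [0,1]$; then assumption \eqref{eq:topologycweak} yields $K\rho^n \to K\rho$ in $H$. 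Together with the convexity, lower semi-continuity and lower boundedness of $F$, this situates the problem inside the abstract sparsity framework of \cite{bc}.

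Applying the representer theorem of \cite{bc} to any minimizer $(\bar \rho, \bar m, \bar \mu)$ of $G$ then produces another minimizer $(\hat \rho, \hat m, \hat \mu)$ of the form
\[
(\hat \rho, \hat m, \hat \mu) = \sum_{i=1}^p c_i (\rho^i, m^i, \mu^i), \qquad c_i > 0, \quad p \leq \dim(H), \quad (\rho^i, m^i, \mu^i) \in \ext \mathscr{B},
\]
with $\sum_{i=1}^p c_i = J_{\alpha,\beta,\delta}(\hat \rho, \hat m, \hat \mu)$; the last identity follows from positive $1$-homogeneity of $J_{\alpha,\beta,\delta}$ combined with the fact that every nonzero element of $\ext \mathscr{B}$ satisfies $J_{\alpha,\beta,\delta}=1$ (Theorem~\ref{thm:fr}). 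Finally, invoking Theorem~\ref{thm:fr} we identify $\ext \mathscr{B} = \mathcal{C} \cup \{0\}$ and discard any zero summand, obtaining exactly the claimed representation with $(\rho^i, m^i, \mu^i) \in \mathcal{C}$.

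I expect the main obstacle to be the topological mismatch just described: \cite{bc} is formulated for a regularizer on a Banach space whose unit ball is compact in a topology making the forward operator continuous, whereas here $J_{\alpha,\beta,\delta}$ naturally lives on $\M_\Om$ with the weak* topology while $K$ is continuous on $\curves$ with the topology of pointwise narrow convergence. Bridging the two requires the equicontinuity-in-time estimate furnished by the energy bound, and this transfer step is the bulk of the real work; once it is in place, the representation follows cleanly from \cite{bc} and Theorem~\ref{thm:fr}.
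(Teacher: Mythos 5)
Your proposal is correct and follows essentially the same route as the paper: existence of a minimizer by the direct method, the abstract representer theorem of \cite{chambolle,bc} applied to the one-homogeneous, weak*-lower semi-continuous regularizer $J_{\alpha,\beta,\delta}$ with compact unit ball, and the identification $\ext\mathscr{B}=\mathcal{C}\cup\{0\}$ from Theorem~\ref{thm:fr} (the paper cites this as an adaptation of Theorem~10 in \cite{bcfr} via Corollary~2 of \cite{chambolle}). The topological bridge you single out as the main obstacle is in fact already supplied by the second part of Lemma~\ref{lem:prop J}, whose conclusion \eqref{topology} gives pointwise-in-time narrow convergence along energy-bounded sequences, so that step requires no new equicontinuity argument.
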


In order to prove the above theorem, it is sufficient to apply Theorem \ref{thm:fr} and check validity for the assumptions of Corollary $2$ in \cite{chambolle}. The proof is a straightforward adaptation of the one of Theorem $10$ in \cite{bcfr} (which deals with the case $\delta=\infty$) and is hence omitted.

We now present an application of Theorem \ref{thm:sparserepresentation} to dynamic inverse problems, in a simplified case of the framework introduced in \cite{bf}. 
To be more specific, let $t_1<\ldots<t_N$ be a finite discretization of the time interval $[0,1]$. The aim is to reconstruct an element of $\curves$ by only making observations at the time instants $t_i$. Hence let $H_{i}$ be a family of finite-dimensional Hilbert spaces and set $H=\bigtimes_{i=1}^N H_{i}$, normed by $\|y\|_{H}^2 := \sum_{i=1}^N \|y_i\|^2_{H_{i}}$. Let
$K_{i} : \M(\Om) \to H_{i}$ be linear and weak* continuous operators. 
For a given observation $y \in  H$, consider the problem of finding $\rho \in \curves$ such that
\[
K_{i} \rho_{t_i} = y_{i} \,\, \text{ for each } \,\,
i = 1 ,\ldots,N\,.
\] 
Following \cite{bf}, we regularize the above problem by
\begin{equation}\label{eq:mri}
\min_{(\rho,m,\mu)\in \widetilde{\mathcal{M}}_\Om} \,  \frac{1}{2}\sum_{i=1}^N \|K_{i}\rho_{t_i} - y_{i}\|^2_{H_{i}} + J_{\alpha,\beta,\delta}(\rho,m,\mu)  \,.
\end{equation}
To recast \eqref{eq:mri} into the form \eqref{eq:inverseproblem}, define the linear operator $K : \curves \rightarrow H$ as $K\rho := (K_{1}\rho_{t_1}, \ldots,K_{N}\rho_{t_N})$ and note that $K$ is continuous in the sense of \eqref{eq:topologycweak}. Moreover define the fidelity term $F \colon H \to \R$ by $F(x) := \frac{1}{2}\|x - y\|^2_{H}$, which is convex, lower semi-continuous and bounded from below. In this way \eqref{eq:mri} is a particular case of \eqref{eq:inverseproblem} and Theorem \ref{thm:sparserepresentation} applies, thus showing the existence and characterizing the structure of sparse solutions to the discrete reconstruction problem regularized via the Hellinger-Kantorovich energy.

\section*{Acknowledgements}

KB and SF are supported by the Christian Doppler Research Association (CDG) and Austrian Science Fund (FWF) through
project PIR-27 ``Mathematical methods for motion-aware medical imaging'' and project P 29192 ``Regularization graphs for variational imaging''.
MC is supported by the Royal Society (Newton International Fellowship NIF\textbackslash R1\textbackslash 192048). The Institute of Mathematics and Scientific Computing, to which KB and SF are affiliated, is a member of NAWI Graz (\texttt{http://www.nawigraz.at/en/}). The authors  KB and SF are members of/associated with BioTechMed Graz (\texttt{https://biotechmedgraz.at/en/}).

\bibliography{bibliography}

\bibliographystyle{my_plain}

\appendix
\section{}

\subsection{Properties of narrow convergence}

We give some results about narrow convergence of measures. These results are classical and are stated for probability measures in the literature: here we adapt them to positive measures. For a complete separable metric space $Y$, we say that a family of measures $\mathcal{A} \subset \M (Y)$ is {\it tight} if for every $\e>0$ there exists a compact set $K_\e \subset Y$ such that $|\mu|(Y \smallsetminus K_\e)<\e$ for all $\mu \in \mathcal{A}$. 
The next proposition provides a tightness criterion for positive measures. The proof follows as in \cite[Remark 5.1.5]{ags}, and is hence omitted.

\begin{proposition} \label{prop:tight}
Let $Y$ be a complete separable metric space and $\mathcal{A} \subset \mathcal{M}^+(Y)$. Suppose that there exists a measurable function $\mathscr{F}: Y \rightarrow [0,\infty]$ such that $\{y \in Y \, \colon \, \mathscr{F}(y) \leq c\}$ is compact for each $c \geq 0$ and 
$
\sup_{\mu \in \mathcal{A}} \int_Y \mathscr{F}(y) \, d\mu(y) < \infty$. Then $\mathcal{A}$ is \emph{tight}. 	
\end{proposition}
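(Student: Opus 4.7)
The plan is to adapt the standard Markov-inequality argument used for probability measures (as in \cite[Remark 5.1.5]{ags}) to the setting of positive Radon measures with uniformly bounded $\mathscr{F}$-integral.

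First I would fix an arbitrary $\e > 0$ and set $K_\e := \{y \in Y \, \colon \, \mathscr{F}(y) \leq C/\e\}$. By the coercivity hypothesis on $\mathscr{F}$, this sublevel set is compact in $Y$. Since $\mathscr{F}$ is measurable, $K_\e$ is a Borel set, so the quantity $\mu(Y \smallsetminus K_\e)$ is well-defined for each $\mu \in \mathcal{A}$.

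Next I would apply Chebyshev--Markov's inequality. For each $\mu \in \mathcal{A}$, using $\mathscr{F} \geq 0$ and the definition of $K_\e$,
\[
\mu(Y \smallsetminus K_\e) = \mu\bigl(\{\mathscr{F} > C/\e\}\bigr) \leq \frac{\e}{C} \int_{\{\mathscr{F} > C/\e\}} \mathscr{F}(y) \, d\mu(y) \leq \frac{\e}{C} \int_Y \mathscr{F}(y) \, d\mu(y) \leq \e,
\]
where the last inequality uses the uniform bound on $\int_Y \mathscr{F}\, d\mu$. Since $\e > 0$ was arbitrary and the estimate is uniform in $\mu \in \mathcal{A}$, this produces for every $\e$ a compact set $K_\e$ witnessing tightness of the family $\mathcal{A}$.

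There is essentially no obstacle: the only minor point to check is that the argument, stated in \cite{ags} for probability measures where the total variation is automatically $1$, transfers to positive measures of possibly different total mass. This is handled automatically because the Markov estimate uses only $\mathscr{F} \geq 0$ and the uniform $\mathscr{F}$-integral bound, and does not require any normalization of $\mu$. (The uniform $\mathscr{F}$-bound also forces, via $\mathscr{F} \geq 0$ and Fatou-type arguments applied to truncations, that the family has no other pathology — but note that the proposition as stated concludes only tightness, not uniform boundedness in total variation, so nothing further is needed.)
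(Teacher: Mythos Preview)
Your proof is correct and follows essentially the same approach as the paper: both set $K_\e := \{\mathscr{F} \leq C/\e\}$ and apply Chebyshev's inequality to obtain the uniform bound $\mu(Y \smallsetminus K_\e) \leq \e$. Your version is slightly more detailed in spelling out why no normalization of $\mu$ is needed, but the argument is otherwise identical.
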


Finally, we provide a result which clarifies the behaviour of narrowly convergent sequences of positive measures when tested against lower semi-continuous, or continuous unbounded integrands. The proof easily follows by combining \cite[Lemma 5.1.7]{ags} with a scaling argument. 

\begin{proposition}\label{prop:narrow_unbounded}
Let $Y$ be a complete separable metric space. Assume that $\{\mu_n\}_n$, $\mu$ belong to $\M^+(Y)$ and $\mu_n \to \mu$ narrowly as $n \to \infty$. If $g \colon Y \to [0,\infty]$ is lower semi-continuous then 
\begin{equation} \label{prop:narrow_unbounded:1}
\int_Y g(y) \, d\mu(y) \leq \liminf_{n \to \infty} \int_Y g(y)\, d \mu_n(y) \,,
\end{equation}
If $f \colon Y \to \R$ is continuous with $|f|$ uniformly integrable with respect to $\{\mu_n\}_n$, that is,
\begin{equation} \label{def:UI}
\lim_{k \to \infty} \, \sup_{n \in \N} \int_{ \{y \in Y \colon  |f(y)| \geq k\}} |f(y)| \, d\mu_n(y) = 0 \,,
\end{equation}
then it holds
\begin{equation} \label{prop:narrow_unbounded:2}
\lim_{n \to \infty} \int_Y f(y) \, d\mu_{n}(y) = \int_Y f(y)\, d\mu(y) \,.
\end{equation}
\end{proposition}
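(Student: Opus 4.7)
The plan is to reduce both assertions to the classical Portmanteau-type facts for bounded continuous integrands, which come directly from the definition of narrow convergence. A preliminary observation, used repeatedly, is that testing the narrow convergence $\mu_n \to \mu$ against the constant function $1 \in C_b(Y)$ yields $\|\mu_n\|_{\M(Y)} \to \|\mu\|_{\M(Y)}$; in particular $\sup_n \|\mu_n\|_{\M(Y)} < \infty$.

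For part \textit{(i)} I would first dispose of the trivial case $\mu = 0$. Otherwise I would approximate $g$ from below by a monotone sequence of bounded Lipschitz functions via the Moreau--Yosida-type formula
\[
g_k(y) := \min\Bigl\{\,k,\ \inf_{z \in Y}\bigl(g(z) + k\,d(y,z)\bigr)\,\Bigr\},
\]
which defines a non-decreasing sequence of $k$-Lipschitz maps in $C_b(Y)$ with $g_k \uparrow g$ pointwise; this is a standard construction on metric spaces and only exploits the lower semi-continuity and non-negativity of $g$. For each fixed $k$ the narrow convergence gives
\[
\int_Y g_k \, d\mu = \lim_{n \to \infty} \int_Y g_k \, d\mu_n \leq \liminf_{n \to \infty} \int_Y g \, d\mu_n.
\]
Passing to the supremum in $k$ on the left and invoking monotone convergence yields \eqref{prop:narrow_unbounded:1}.

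For part \textit{(ii)} I would use a truncation argument. Setting $f_k := \max(-k, \min(k,f)) \in C_b(Y)$, one has $|f - f_k| \leq |f|\,\rchi_{\{|f| \geq k\}}$, so the triangle inequality provides
\[
\Bigl|\int_Y f\, d\mu_n - \int_Y f\, d\mu\Bigr| \leq \int_{\{|f|\geq k\}} |f|\, d\mu_n + \Bigl|\int_Y f_k\, d(\mu_n - \mu)\Bigr| + \int_{\{|f|\geq k\}} |f|\, d\mu.
\]
The first summand is arbitrarily small, uniformly in $n$, by the uniform integrability assumption \eqref{def:UI}. The middle one tends to zero as $n \to \infty$ for each fixed $k$, by narrow convergence applied to $f_k \in C_b(Y)$. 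The last summand tends to zero as $k \to \infty$ by dominated convergence, once we know that $|f| \in L^1_\mu(Y)$; this last fact follows by applying part \textit{(i)} to the lower semi-continuous non-negative function $|f|$ and combining it with the uniform integrability bound and the uniform bound on $\|\mu_n\|_{\M(Y)}$. Choosing first $k$ and then $n$ large yields \eqref{prop:narrow_unbounded:2}.

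The step which requires the most care, rather than being a true obstacle, is the Moreau--Yosida approximation on a general complete separable metric space: one must verify that the sequence $g_k$ is well-defined, Lipschitz, non-decreasing, and satisfies $g_k(y) \uparrow g(y)$ at every $y$, where only the lower semi-continuity of $g$ at $y$ is actually needed. Once this approximation is in place, both parts of the proposition are routine combinations of narrow convergence with monotone and dominated convergence.
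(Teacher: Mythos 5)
Your proof is correct, but it takes a genuinely different route from the paper's. The paper argues by reduction to the probability-measure case: it treats the degenerate case $\mu=0$ separately (where (ii) follows from uniform integrability and $\|\mu_n\|_{\M(Y)}\to 0$), and otherwise normalizes, setting $\tilde\mu_n:=\mu_n/\|\mu_n\|_{\M(Y)}$ and $\tilde\mu:=\mu/\|\mu\|_{\M(Y)}$, checks that narrow convergence and the uniform integrability of $|f|$ survive this rescaling (possible since $\|\mu_n\|_{\M(Y)}\to\|\mu\|_{\M(Y)}>0$), and then simply invokes the probability-measure version of both statements, namely \cite[Lemma 5.1.7]{ags}. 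You instead give a direct, self-contained argument: Lipschitz inf-convolution plus monotone convergence for (i), and truncation $f_k=\max(-k,\min(k,f))$ combined with uniform integrability, narrow convergence tested on the bounded truncations, and the integrability $|f|\in L^1_\mu(Y)$ (which you correctly deduce from (i) together with the uniform mass bound $\sup_n\|\mu_n\|_{\M(Y)}<\infty$) for (ii). What the paper's reduction buys is brevity, essentially two lines once the cited lemma is granted, at the cost of the separate $\mu=0$ case, which your argument absorbs automatically; what your approach buys is independence from the external reference (you are in effect reproving it for finite positive measures), at the cost of verifying the standard properties of the Moreau--Yosida approximants $g_k$ on a general metric space, which you correctly single out and which indeed use only lower semi-continuity and non-negativity of $g$. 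Both arguments are complete.
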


\subsection{Disintegration of measures} \label{app:disintegration}
In this section we state and prove the disintegration theorem employed in Section \ref{sec:uniqueness}. This result is a straightforward consequence of \cite[Theorem 5.3.1]{ags}.
\begin{theorem}\label{thm:app:disintegration}
Let $Z,X$ be Radon separable metric spaces and let $\mu \in \mathcal{M}^+(Z)$, $\nu \in \mathcal{M}^+(X)$ be given. Let $\pi : Z \rightarrow X$ a measurable map such that $\pi_\# \mu \ll \nu$. Then there exists a Borel family of measures $\{\mu^x\}_{x \in X} \subset  \mathcal{M}^+(Z)$ such that
\begin{itemize}
\item [i)]$\mu^x(Z\smallsetminus \pi^{-1}(x)) = 0$ for $\nu$-a.e.~$x \in X$,
\item [ii)] for every function $f \in L^1_\mu(Z)$ there holds 
 \begin{equation} \label{app100}
\int_Z f(z)\, d\mu(z) = \int_X \int_Z f(z)\, d\mu^x(z)\, d\nu(x)\,,
\end{equation}
\item [iii)] if $\mu$ is concentrated on $E \subset Z$, then $\mu^x$ is concentrated on $\pi^{-1}(x) \cap E$ for $\nu$-a.e. $x \in X$.
\end{itemize}
\end{theorem}

\begin{proof}
Without loss of generality we can suppose that $\mu \neq 0$. By a rescaling argument we  can assume that $\|\mu\|_{\mathcal{M}(Z)} = 1$ as well.
Thanks to \cite[Theorem 5.3.1]{ags} there exists a Borel family of measures $\{\tilde\mu^x\}_{x \in X} \subset  \mathcal{M}^+(Z)$ such that
$\tilde \mu^x(Z \smallsetminus \pi^{-1}(x)) = 0$ for $(\pi_\#\mu)$-a.e.~$x \in X$, and that \eqref{app100} holds with $\mu^x$ and $\nu$ replaced by $\tilde{\mu}^x$ and $\pi_\#\mu$, respectively, 
for every Borel function $f : Z \rightarrow [0,\infty]$.
For all $x \in X$ set
$\mu^x := \frac{\partial(\pi_\# \mu)}{\partial \nu}(x)\,  \tilde \mu^x$. 
We immediately obtain that $\mu^x \in \mathcal{M}^+(Z)$ is a family of Borel measures satisfying $(i)$. Moreover, it is easy to check that $\mu^x$ satisfies \eqref{app100} for every Borel function $f : Z \rightarrow [0,\infty]$. 
If $f  \in L^1_\mu(Z)$, by \eqref{app100} we get $f \in  L^1_{\mu^x}(Z)$ for $\nu$-a.e.~$x\in X$, yielding $(ii)$. Finally, $(iii)$ is implied by $(ii)$.
\end{proof}

\subsection{Properties of $B_\delta$ and $J_{\alpha,\beta,\delta}$}

In this section we gather some of the properties of the functionals $B_\delta$ and $J_{\alpha,\beta,\delta}$ introduced in Section \ref{sec:benamou}. The interested reader can find the proofs of such results in Proposition 2.6 and Lemmas 4.5, 4.6 in \cite{bf}.

\begin{lemma}[Properties of $B_\delta$] \label{lem:prop B}
	The functional $B_\delta$ defined in \eqref{bb} is non-negative, convex, one-homogeneous and sequentially lower semi-continuous with respect to the weak* topology on $\M_\Om$. Moreover it satisfies the following properties:
	\begin{enumerate}
	\item [i)] if $B_\delta(\rho,m,\mu)<\infty$, then $\rho \geq 0$ and $m,\mu \ll \rho$, that is, there exist measurable maps $v \colon X_\Om \to \R^d$, $g \colon X_\Om \to \R$ such that $m=v\rho$, $\mu=g \rho$,
	\item [ii)] if $\rho \geq 0$ and $m = v \rho,\mu=g \rho$ for some  measurable $v \colon X_\Om \to \R^d$, $g \colon X_\Om \to \R$, then
\begin{equation} \label{formula B}
B_\delta(\rho,m,\mu) =\int_{X_\Om} \Psi_\delta (1,v,g) \, d\rho =  \frac12 \int_{X_\Om} \left(|v|^2 +\delta^2 g^2 \right) \, d\rho \,. 
\end{equation}
	\end{enumerate}
\end{lemma}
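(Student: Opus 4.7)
The approach rests on the fact that $\Psi_\delta\colon \R\times\R^d\times\R \to [0,\infty]$ is the \emph{perspective} of the convex function $(x,y)\mapsto \frac{1}{2}(|x|^2+\delta^2 y^2)$, and hence is proper, convex, lower semi-continuous, and positively one-homogeneous on its domain. I would begin by showing that definition \eqref{bb} is independent of the dominating measure $\lambda$: given two admissible $\lambda_1,\lambda_2$, take $\lambda=\lambda_1+\lambda_2$, apply the chain rule for Radon-Nikodym derivatives, and invoke the positive one-homogeneity of $\Psi_\delta$ to identify the integrals. Non-negativity is then immediate from $\Psi_\delta\geq 0$, and positive one-homogeneity of $B_\delta$ follows by scaling $(\rho,m,\mu)\mapsto(c\rho,cm,c\mu)$ with $c\geq 0$ while keeping $\lambda$ fixed and applying one-homogeneity of $\Psi_\delta$ pointwise.

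For convexity, given $(\rho^i,m^i,\mu^i)$, $i=1,2$, and $\theta\in[0,1]$, I would choose a single dominating measure $\lambda$ for all six components (e.g.~$\lambda:=|\rho^1|+|\rho^2|+|m^1|+|m^2|+|\mu^1|+|\mu^2|$); then the Radon-Nikodym densities satisfy linearity under convex combinations, and pointwise convexity of $\Psi_\delta$ integrates to convexity of $B_\delta$. For part (ii), taking $\lambda=\rho$ gives $d\rho/d\lambda=1$, $dm/d\lambda=v$, $d\mu/d\lambda=g$, so \eqref{formula B} reduces to the definition of $\Psi_\delta$ with first argument $1$. Part (i) is then a direct consequence of the definition of $\Psi_\delta$: if $B_\delta(\rho,m,\mu)<\infty$, then with $\lambda:=|\rho|+|m|+|\mu|$ we must have $d\rho/d\lambda\geq 0$ and $dm/d\lambda=d\mu/d\lambda=0$ wherever $d\rho/d\lambda=0$ (otherwise the integrand is $+\infty$ on a set of positive $\lambda$-measure), which simultaneously yields $\rho\geq 0$ and $m,\mu\ll\rho$ with measurable densities obtained from the Radon-Nikodym derivatives.

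The main obstacle is the weak* lower semi-continuity, and my plan is to obtain it via the dual formulation. A direct computation gives the Fenchel conjugate
\[
\Psi_\delta^*(a,b,c)=\chi_{K_\delta}(a,b,c),\qquad K_\delta:=\Big\{(a,b,c)\in\R\times\R^d\times\R\,\colon\, a+\tfrac{|b|^2}{2}+\tfrac{c^2}{2\delta^2}\leq 0\Big\},
\]
because $\sup_{t>0} t\bigl[a+\tfrac{|b|^2}{2}+\tfrac{c^2}{2\delta^2}\bigr]$ is either $0$ or $+\infty$. Since $\Psi_\delta$ is convex, lower semi-continuous, one-homogeneous, and $\Psi_\delta(0,0,0)=0$, a standard duality argument for integral functionals of measures (e.g.~Bouchitté--Valadier, or Reshetnyak-type representation) yields
\[
B_\delta(\rho,m,\mu)=\sup\left\{\int_{X_\Om} a\,d\rho+\int_{X_\Om} b\cdot dm+\int_{X_\Om} c\,d\mu \,\colon\, (a,b,c)\in C_b(X_\Om;K_\delta)\right\}.
\]
As a supremum of linear weak* continuous functionals, $B_\delta$ is automatically weak* lower semi-continuous. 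The technical step here is justifying the dual representation; I would either cite Reshetnyak's theorem directly (noting that $K_\delta$ is closed, convex, and contains the origin, so a suitable countable dense family of test functions suffices) or verify it by approximating $\Psi_\delta$ from below by Moreau-Yosida regularizations of the truncations $\Psi_\delta\wedge n$, which are continuous and bounded, and then passing to the supremum.
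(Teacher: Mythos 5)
Your proposal is correct and follows essentially the same route as the paper's source for this lemma: the paper itself gives no proof but defers to \cite[Proposition 2.6]{bf}, where the properties are obtained exactly through the theory of convex, one-homogeneous, lower semi-continuous integrands of measures — independence of the dominating measure, pointwise convexity/homogeneity of $\Psi_\delta$, the choice $\lambda=\rho$ for (ii), the finiteness constraint forcing $\rho\geq 0$ and $m,\mu\ll\rho$ for (i), and weak* lower semi-continuity via the dual/Reshetnyak-type representation with the convex set $K_\delta=\{a+\tfrac{|b|^2}{2}+\tfrac{c^2}{2\delta^2}\leq 0\}$. Only minor caveat: of your two suggested justifications for the duality step, citing Reshetnyak/Bouchitté--Valadier is the robust one, while the Moreau--Yosida truncation sketch would need the additional standard step of writing $\Psi_\delta$ as a countable supremum of linear functions with coefficients in $K_\delta$.
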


\begin{lemma}[Properties of $J_{\alpha,\beta,\delta}$] \label{lem:prop J}
Let $\alpha, \beta,\delta >0$. The functional $J_{\alpha,\beta,\delta}$ is non-negative, convex, one-homogeneous and sequentially lower semi-continuous with respect to the weak* topology on $\M_\Om$.  
		For $(\rho,m,\mu) \in \M_\Om$ such that $J_{\alpha,\beta,\delta}(\rho,m,\mu)< \infty$ we have that 
	\begin{equation} \label{lem:prop J est}
         \max \{ \alpha \norm{\rho}_{\M(X_\Om)}, \,  C \norm{m}_{\M(X_\Om;\R^d)} ,\, C \norm{\mu}_{\M(X_\Om)} \} \leq
          J_{\alpha,\beta,\delta}(\rho,m,\mu)
	\end{equation}	
	where $C:=\min\{ 2\alpha,\beta \min \{1, \delta^2\}\}$.
	If in addition the sequence  $\{(\rho^n,m^n,\mu^n)\}_n$ in $\M_\Om$ is such that $\sup_n J_{\alpha,\beta,\delta} (\rho^n,m^n,\mu^n)<\infty$, 
then $\rho^n = dt \otimes \rho_t^n$ for some $(t \mapsto \rho_t^n) \in \pcurves$ and there exists 
	$(\rho,m,\mu)$ in $\Distr_\Om$ with $\rho= dt \otimes \rho_t$, $(t \mapsto\rho_t) \in \pcurves$, such that, up to subsequences,   
	$(\rho^n,m^n,\mu^n) \weakstar (\rho,m,\mu)$ weakly* in $\M_\Om$ and 
	$\rho_t^n \to \rho_t$ narrowly in $\M(\Om)$ for every $t \in [0,1]$.
\end{lemma}

\subsection{Proof of Proposition \ref{prop:complete}} \label{app:complete}
 
Remember that $\car_V = C([0,1];\cone_V)$ by Proposition \ref{prop:narrow}. Therefore, in order to prove that $(\car_V,\D)$ is complete and separable, it is sufficient to show that $(\cone_V,\D_F)$ is complete and separable (see Theorem 4.19 in \cite{kechris}). Let us first prove that $(\cone_V,\D_F)$ is complete. Hence, let $\rho^n=h_n \delta_{\gamma_n} \in \cone_V$ be a Cauchy sequence. 
By \eqref{eq:char_flat} we have 
$|h_n - h_m| \leq \D_F(\rho^n,\rho^m)$ for all $m,n \in \N$. Therefore $h_n \to h$ for some $h \geq 0$. If $h=0$, by \eqref{eq:char_flat} we have $\D_F(\rho^n,0 )=h_n \to 0$, showing that $\rho^n$ converges to $0 \in \cone_V$. Assume now that $h>0$. Notice that $|\gamma_n - \gamma_m|\leq 2$ for sufficiently large $m,n$, otherwise we could extract a subsequence (not relabelled) such that $\D_F(\rho^n,\rho^m) = h_n + h_m \to 2h >0$ as $m,n \to \infty$, which contradicts $\rho^n$ being Cauchy. By \eqref{eq:char_flat} and the facts that $h_n \to h >0$ and that $\rho^n$ is Cauchy, we get that $\gamma_n$ is Cauchy, so that $\gamma_n \to\gamma \in V$. An application of \eqref{eq:char_flat} shows that $\rho^n \to \rho:=h \delta_\gamma$ with respect to $\D_F$, concluding completeness. The fact that $(\car_V,\D)$ is separable is immediate: indeed the countable set $\cone_V' :=\left\{ h \delta_{\gamma}  \, \colon \,  h \in [0,\infty)\cap\Q,\, \gamma \in V \cap \Q^d \right\} \subset \cone_V$ is $\D_F$-dense in $\cone_V$, since $V$ is the closure of a domain.

\subsection{Comparison principle}

In this section we recall a comparison principle for signed measure solutions of the continuity equation. 

\begin{proposition}[Comparison principle] \label{prop:comparison}
	Let  $\rho_t \colon [0,1]  \to \M(\R^d)$ be narrowly continuous and $v \colon (0,1) \times \R^d \to \R^d$, $g \colon (0,1) \times \R^d \to \R$ be measurable. Suppose that $\de_t \rho_t + \div (v \rho_t) = g \rho_t$ holds in $(0,1) \times \R^d$ in the sense of \eqref{cont weak}. Assume that $\rho_0 \leq 0$, as well as \eqref{eq:comp1}, \eqref{eq:comp2} and 
	\begin{equation}
  \int_0^1 \int_{\R^d} \left( |v(t,x)|+|g(t,x)| \right) \, d|\rho_t|(x) \, dt < \infty\,. \label{eq:comp222} 	
	\end{equation}
Then $\rho_t \leq 0$ for all $t \in [0,1]$. 
\end{proposition}

A proof of the above proposition can be found in   \cite[Lemma 3.5]{maniglia}. We just point out that in \cite[Lemma 3.5]{maniglia} it is assumed that the narrowly continuous curve $t \mapsto \rho_t \in \M(\R^d)$ satisfies $\int_0^1 |\rho_t|(B) \, dt < \infty $ for all $B \subset \R^d$ compact. However this condition is always fulfilled, since $\rho_t$ automatically satisfies $\sup_{t \in [0,1]}\norm{\rho_t}_{\M(\R^d)}< \infty$, as shown in \cite[Proposition A.3]{bf}. Moreover the statement of \cite[Lemma 3.5]{maniglia} also requires that $g$ is bounded: after carefully inspecting the proof, we noticed that such assumption is not needed.

\subsection{Property of convolutions}

Here we recall a result on convolution of measures, which can be found in \cite[Lemma 3.9]{maniglia}.

\begin{proposition} \label{prop:smoothmeasure}
Let $p \geq 1$, $\rho \in \M^+(\R^d)$, $E \in \M(\R^d,\R^m)$ and $\xi$ be a convolution kernel on $\R^d$. Suppose that $E$ is absolutely continuous with respect to $\rho$. Then,
	\[
	\int_{\R^d} \left|  \frac{E \ast \xi}{\rho \ast \xi}  \right|^p \, (\rho \ast \xi) \, dx  \leq 
	\int_{\R^d} \left|  \frac{dE }{d\rho }  \right|^p \,  \, d\rho\,,
	\]
	where $ dE /d\rho$ is the Radon-Nikodym derivative of $E$ with respect to $\rho$.
\end{proposition}

\end{document}